\def\XXint#1#2#3{{\setbox0=\hbox{$#1{#2#3}{\int}$}
    \vcenter{\hbox{$#2#3$}}\kern-.5\wd0}}
\def\CC{\color{blue}}
\def\BB{\color{black}}
\def\11{\mathbf{1}}
\begin{document}
\numberwithin{equation}{section}
\newtheorem{theoreme}{Theorem}[section]
\newtheorem{proposition}[theoreme]{Proposition}
\newtheorem{remarque}[theoreme]{Remark}
\newtheorem{lemme}[theoreme]{Lemma}
\newtheorem{corollaire}[theoreme]{Corollary}
\newtheorem{definition}[theoreme]{Definition}
\newtheorem{exemple}[theoreme]{Example}

\title{Homogenization of the Stokes system in a non-periodically perforated domain}
%\author[1]{X. Blanc}
\author[1]{S. Wolf}
\affil[1]{{\footnotesize Universit\'e de Paris, Sorbonne Paris-Cit\'e, Sorbonne Universit\'e, CNRS, Laboratoire Jacques-Louis Lions, F-75013 Paris}}

\maketitle
\begin{abstract}
In our recent work \cite{BW}, we have studied the homogenization of the Poisson equation in a class of non periodically perforated domains. In this paper, we examine the case of the Stokes system. We consider a porous medium in which the characteristic distance between two holes, denoted by~$\varepsilon$, is proportional to the characteristic size of the holes. It is well known (see \cite{allaire1989homogenization},\cite{sanchez1980fluid} and \cite{tartar127incompressible}) that, when the holes are periodically distributed in space, the velocity converges to a limit given by the Darcy's law when the size of the holes tends to zero. We generalize these results to the setting of \cite{BW}. The non-periodic domains are defined as a local perturbation of a periodic distribution of holes. We obtain classical results of the homogenization theory in perforated domains (existence of correctors and regularity estimates uniform in $\varepsilon$) and we prove $H^2-$convergence estimates for particular force fields. 
\end{abstract}
%%%%%%%%%%%%%%%%%%%%%%%%%%%%%%%%%%%%%%%%%%%%%%%%%%%%%%%%%%%%%%%%%%%%%%%%%%%%%%%

\tableofcontents

%%%%%%%%%%%%%%%%%

%%%%%%%%%%%%%%%%%

\section{Introduction}

In this paper, we study the three dimensional Stokes system in a perforated domain for an incompressible fluid with Dirichlet boundary conditions:
\begin{equation}
\begin{cases}
\begin{aligned}
-\Delta u_{\varepsilon} + \nabla p_{\varepsilon} & = f \quad \mathrm{in} \quad \Omega_{\varepsilon} \\
\mathrm{div} \ u_{\varepsilon} & = 0 \\
u_{\varepsilon} & = 0 \quad \mathrm{on} \quad \partial \Omega_{\varepsilon}.
\end{aligned}
\end{cases}
\label{eq:stokes}
\end{equation}
In Equation \eqref{eq:stokes}, $\Omega_{\varepsilon} \subset \mathbb{R}^3$ denotes the perforated domain, the vector valued function $f$ is the force field, the unknowns $u_{\varepsilon}$ and $p_{\varepsilon}$ refer respectively to the velocity and the pressure of the fluid. The distance between two neighbouring holes is denoted by $\varepsilon$. We assume that the charasteristic size of the holes is $\varepsilon$. Our purpose is to understand the limit of $(u_{\varepsilon},p_{\varepsilon})$ when $\varepsilon \rightarrow 0$. We construct classical objects of the homogenization theory such as correctors (Theorem~\ref{th:cor}) and we give new rates of convergence of $u_{\varepsilon}$ to its limit when $f$ is smooth, compactly supported and $\mathrm{div}(Af) = 0$ where $A$ is the so-called permeability tensor (see Theorem \ref{th:convergence}).

\medskip

To our knowledge, the first paper on the homogenization of the Stokes system in perforated domains is \cite{tartar127incompressible}. In this work, Equation \eqref{eq:stokes} is studied for a periodic distribution of perforations in the macroscopic domain $\Omega$ (that is, each cell of a periodic array of size $\varepsilon$ contains a perforation). It is in particular proved that $(u_{\varepsilon}/\varepsilon^2,p_{\varepsilon})$ converges in some sense to a couple $(u_0,p_0)$ given by the Darcy's law. This result can be guessed by performing a standard two scale expansion of $(u_{\varepsilon},p_{\varepsilon})$, see \cite{sanchez1980fluid}. Error estimates between $u_{\varepsilon}$ and its first order term in $\varepsilon$ are proved in \cite{maruvsic1997asymptotic,marusic1996error} for particular situations namely the two-dimensional case in \cite{marusic1996error} and the case of a periodic macroscopic domain in \cite{maruvsic1997asymptotic}. Sharp error estimates under general assumptions on $f$ have been obtained in \cite{shen}. The case of boundary layers in an infinite two-dimensional rectangular has been addressed in \cite{jager1994flow}. The results of \cite{tartar127incompressible} have been extended in \cite{allaire1989homogenization} to porous medium in which both solid and fluid parts are connected. The case of holes that scale differently as $\varepsilon$ is examined in \cite{allaire1991homogenization}. Recently, the homogenization of the Stokes system at higher order has been adressed in \cite{feppon2020high}.

\medskip

 In this paper, we adapt the results of \cite{tartar127incompressible} to the setting of \cite{BW}, that is to perforated domains that are defined as a local perturbation of the periodically perforated domain considered in \cite{tartar127incompressible}. This framework is inspired by the papers \cite{BLLMilan,BLLcpde,BLLfutur1}  (see \cite[Remark 1.5]{BW}). The purpose of these works is to study the homogenization of ellitptic PDEs with coefficients that are periodic and perturbed by a defect which belongs to $L^r$, $1 < r < +\infty$.

\medskip

%RAJOUTER ENONCE DES THEOREMES (numéro référencés dans l'article)

%Extensions: - dans le cas du non-périodique  ; - dans le cas périodique 

%The case of periodically distributed holes in space is understood. In particular, it is well known that the homogenized problem depends on the scale of the holes (see \cite{allaire1991homogenization}) and that many regimes can we distinguished. If the holes scale like $\varepsilon$, the limit equation is given by the Darcy's law. To our knowledge, this result was first proved by L. Tartar in \cite{tartar127incompressible} for periodic holes that are strictly included in each unit cell $Q_k := ]k - \frac{1}{2},k+\frac{1}{2} [^d$, $k \in \mathbb{Z}^d$. 

\medskip

The paper is organized as follows. We recall in subsection~\ref{section11} the main results of the homogenization of the Stokes system in the periodic case. We introduce in subsection~\ref{section12} the non-periodic setting. We state in Section~\ref{section2} the main results of this paper and we make some remarks. These results are proved in Section~\ref{sectproof}. Some technical Lemmas are given in Appendix~\ref{sect:appendix}. In Appendix~\ref{sec:geom}, we give more specific geometric assumptions on the non-periodic perforations that allow to obtain the results of Section~\ref{section2}.

\subsection{General notations} 

The canonical basis of $\mathbb{R}^3$ is denoted $e_1,e_2,e_3$. We denote the euclidian scalar product between two vectors $u$ and $v$ by $u \cdot v$. The euclidian distance to a subset $A \subset \mathbb{R}^3$ will be written $d(\cdot,A)$. The diameter of $A$ will be denoted by $\mathrm{diam}(A)$. If $A$ is a Lipschitz domain, we denote by $n$ the outward normal vector. $|\cdot|$ will be the Lebesgue measure on $\mathbb{R}^3$.

\medskip

 If $A,B$ are two real matrices, we write $A : B := \sum_{i,j=1}^3 A_{i,j}B_{i,j}$. If $X$ is a vector or a matrix, its transpose will be denoted by $X^T$. If $A \subset \mathbb{R}^3$, the complementary set of $A$ will be written $A^c$.
We define $Q := ]-\frac{1}{2},\frac{1}{2}[^3$ and, for $k \in \mathbb{Z}^3$, $Q_k := \prod_{j=1}^3\left]-\frac{1}{2} +k_j,\frac{1}{2}+k_j \right[^3 = Q + k$. If $x \in \mathbb{R}^3$ and $r > 0$, we denote by $B(x,r)$ the open ball centered in $x$ of radius $r$. 

\medskip

The gradient operator of a real or vector valued function will be denoted $\nabla \cdot$ and the second order derivative of a real or vector valued function will be written $D^2 \cdot$. The divergence operator will be denoted $\mathrm{div} \ \cdot$ and the scalar or vectorial  Laplacian $\Delta\cdot$.

\paragraph{Functional spaces.} If $\omega$ is an open subset of $\mathbb{R}^3$ and $1 \leq p \leq + \infty$, we denote by $L^p(\omega)$ the standard Lebesgue spaces and $H^s(\omega),W^{m,p}(\omega)$ the standard Sobolev spaces. For $s \in \mathbb{R}$ and $m\in\mathbb{N}^*$, we denote by $\left[L^p(\omega) \right]^3$, $\left[H^s(\omega) \right]^3$ and $\left[ W^{m,p}(\omega) \right]^3$ the spaces of vector valued functions whose components are respectively elements of $L^p(\omega)$, $H^s(\omega)$ and $W^{m,p}(\omega)$. The space $L^p(\omega)/\mathbb{R}$ corresponds to the equivalence classes for the relation $\sim$ defined by: for all $f,g \in L^p(\omega)$, $f \sim g$ if and only if $f - g$ is a.e constant in $\omega$.
$\mathcal{D}(\omega)$ will be the set of smooth and compactly supported functions in $\omega$. We denote by $\mathcal{C}^{\infty}(\omega)$ (resp. $\mathcal{C}^{\infty}(\overline{\omega})$) the set of smooth functions defined on $\omega$ (resp. $\overline{\omega}$).

\subsection{Review of the periodic case}
\label{section11}

In this subsection, we recall the results of the homogenization of the Stokes system in periodically perforated domains with large holes. For more details, see \cite{tartar127incompressible,sanchez1980fluid,allaire1989homogenization}.

%NOTATION : produit scalaire canonique, :, espaces fonctionnels H^1,per H^1,per^3, L^2/R ; distance euclidienne; notation : (IPP) ; strictly included , Lebesgue measure

\paragraph{Notations.}  We fix a locally Lipschitz bounded domain $\Omega \subset \mathbb{R}^3$ and a subset $\mathcal{O}_0^{\mathrm{per}}$ such that $\mathcal{O}_0^{\mathrm{per}} \subset \subset Q$, $\mathcal{O}_0^{\mathrm{per}}$ is of class $\mathcal{C}^{2,\alpha}$ and $Q \backslash \overline{\mathcal{O}_0^{\mathrm{per}}}$ is connected. We define for $k \in \mathbb{Z}^3$, $\mathcal{O}_k^{\mathrm{per}} := \mathcal{O}_0^{\mathrm{per}} + k$. $\mathcal{O}^{\mathrm{per}}$ will be the set of perforations, that is, $\mathcal{O}^{\mathrm{per}} := \bigcup_{k \in \mathbb{Z}^3} \mathcal{O}_k^{\mathrm{per}}$.

\medskip

We define some periodic functional spaces that will be used in the sequel. Using the notations of our problem, we set for $1 \leq p \leq + \infty$,
$$L^{p,\mathrm{per}}\left(Q \backslash \overline{\mathcal{O}_0^{\mathrm{per}}} \right) := \left\{ u \in L^p_{\mathrm{loc}}(\mathbb{R}^3 \backslash \overline{\mathcal{O}^{\mathrm{per}}}) \ \mathrm{s.t.} \ u \ \mathrm{is} \ Q-\mathrm{periodic} \right\}$$
and
$$H^{1,\mathrm{per}}\left( Q \backslash \overline{\mathcal{O}^{\mathrm{per}}_0} \right) := \left\{ u \in H^1_{\mathrm{loc}}(\mathbb{R}^3 \backslash \overline{\mathcal{O}^{\mathrm{per}}}) \ \mathrm{s.t.} \ u \ \mathrm{is} \ Q-\mathrm{periodic} \ \mathrm{and} \ \partial_i u \ \mathrm{are} \ Q-\mathrm{periodic}, i=1,2,3  \right\}.$$
The space of $H^1-$periodic vector valued functions will be $\left[ H^{1,\mathrm{per}}\left( Q \backslash \overline{\mathcal{O}^{\mathrm{per}}_0} \right) \right]^3$. The space of $H^1-$periodic functions that vanish on the perforations is
$$H^{1,\mathrm{per}}_0\left( Q \backslash \overline{\mathcal{O}^{\mathrm{per}}_0} \right) := \left\{ u \in H^{1,\mathrm{per}}\left( Q \backslash \overline{\mathcal{O}^{\mathrm{per}}_0} \right) \ \mathrm{s.t.} \ u = 0 \ \mathrm{on} \ \partial \mathcal{O}^{\mathrm{per}}_0 \right\}.$$
Similarly, we define $\left[ H^{1,\mathrm{per}}_0\left( Q \backslash \overline{\mathcal{O}^{\mathrm{per}}_0} \right) \right]^3$. In the sequel, we use the summation convention on repeated indices.

%It is understood that a function $u \in H^{1,\mathrm{per}}\left( Q \backslash \overline{\mathcal{O}^{\mathrm{per}}_0} \right)$ is extended by periodicity to $\mathbb{R}^3 \backslash \overline{\mathcal{O}^{\mathrm{per}}}$ and its extension is still denoted $u$.

\medskip

 For $\varepsilon > 0$, we denote $Y_{\varepsilon}^{\mathrm{per}} := \{ k \in \mathbb{Z}^3, \ \varepsilon Q_k \subset \Omega\}$. We define the periodically perforated domain~$\Omega_{\varepsilon}^{\mathrm{per}}$ by (see Figure~\ref{figper1/20})
 $$\Omega_{\varepsilon}^{\mathrm{per}} := \Omega\setminus \bigcup_{k \in Y_{\varepsilon}^{\mathrm{per}}} \varepsilon \overline{\mathcal{O}_k^{\mathrm{per}}}.$$ It is easily seen that $\Omega_{\varepsilon}^{\mathrm{per}}$ is open and connected. 
 
 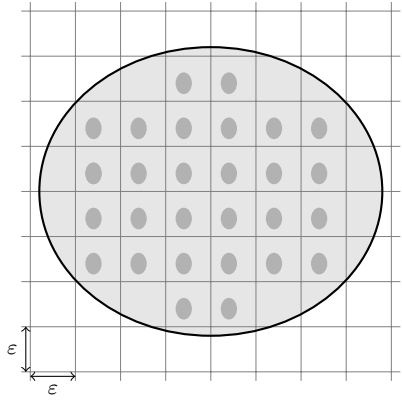
\begin{figure}[h!]
%periodic perforation: cell problem
  \centering
     \begin{tikzpicture}[scale=.12]\footnotesize
 \pgfmathsetmacro{\xone}{-21}
 \pgfmathsetmacro{\xtwo}{21}
 \pgfmathsetmacro{\yone}{-21}
 \pgfmathsetmacro{\ytwo}{21}
 
\fill[gray!20] (0,0) ellipse (19cm and 16cm);

\begin{scope}<+->;
  \draw[step=5cm,gray,very thin] (\xone,\yone) grid (\xtwo,\ytwo);
\end{scope}

\foreach \x in {-10,-5,0,5}
    \foreach \y in {-10,-5,0,5}
        \fill[gray!60] (\x+2,\y+2) ellipse (0.9cm and 1.2cm);

\draw[<->] (-20,-20.5) -- (-15,-20.5);
\draw (-17.5,-22) node[]{$\varepsilon$};
\draw[<->] (-20.5,-20) -- (-20.5,-15);
\draw (-22,-17.5) node[]{$\varepsilon$};
 \fill[gray!60] (-13,2) ellipse (0.9cm and 1.2cm);
  \fill[gray!60] (-13,-3) ellipse (0.9cm and 1.2cm);
   \fill[gray!60] (-13,-8) ellipse (0.9cm and 1.2cm);
      \fill[gray!60] (-13,7) ellipse (0.9cm and 1.2cm);
 \fill[gray!60] (12,2) ellipse (0.9cm and 1.2cm);
  \fill[gray!60] (12,-3) ellipse (0.9cm and 1.2cm);
   \fill[gray!60] (12,-8) ellipse (0.9cm and 1.2cm);
      \fill[gray!60] (12,7) ellipse (0.9cm and 1.2cm);
       \fill[gray!60] (-13,2) ellipse (0.9cm and 1.2cm);
  \fill[gray!60] (-3,-13) ellipse (0.9cm and 1.2cm);
   \fill[gray!60] (2,-13) ellipse (0.9cm and 1.2cm);
      \fill[gray!60] (-3,12) ellipse (0.9cm and 1.2cm);
  \fill[gray!60] (2,12) ellipse (0.9cm and 1.2cm);
\draw[black,thick] (0,0) ellipse (19cm and 16cm);
\end{tikzpicture}
\caption{Periodic domain $\Omega_{\varepsilon}^{\mathrm{per}}$}
\label{figper1/20}
\end{figure}

For $f \in \left[L^2(\Omega)\right]^3$, there exists a unique couple $(u_{\varepsilon},p_{\varepsilon}) \in \left[ H^1_0(\Omega_{\varepsilon}^{\mathrm{per}}) \right]^3 \times L^2(\Omega_{\varepsilon}^{\mathrm{per}})/\mathbb{R}$ solution of System \eqref{eq:stokes}. The Poincar\'e inequality in perforated domains (see e.g. \cite[Lemma 1]{tartar127incompressible}) and standard energy estimates yield the bound $$\| u_{\varepsilon} \|_{\left[L^2(\Omega_{\varepsilon}^{\mathrm{per}})\right]^3} \leq C \varepsilon^2$$ where $C$ is a constant independent of $\varepsilon$. Thus, after extraction of a subsequence, $u_{\varepsilon}/\varepsilon^2$ converges $L^2-$weakly to some limit velocity $u^*$. Besides, it can be proved (see \cite[Theorem 1]{tartar127incompressible}) that the pressure $p_{\varepsilon}$ converges $L^2(\Omega)/\mathbb{R}-$strongly to the macroscopic pressure $p_0$ which is defined up to the addition of a constant. The couple $(u^*,p_0)$ is determined by the Darcy's law which we recall here
 \begin{equation}
 \begin{cases}
 \begin{aligned}
 \mathrm{div}(u^*) & = 0 \quad \mathrm{in} \quad \Omega \\
 u^* & = A(f-\nabla p_0) \\
 u^* \cdot n & = 0 \quad \mathrm{on} \quad \partial \Omega.
 \end{aligned}
 \end{cases}
 \label{eq:darcy}
 \end{equation}
In \eqref{eq:darcy}, the symmetric and positive definite matrix $A$ is the so-called permeability tensor. Its coefficients are defined by
\begin{equation}
A_i^j = \int_{Q \setminus \overline{\mathcal{O}^{\mathrm{per}}_0}} w_j^{\mathrm{per}} \cdot e_i = \int_{Q \setminus \overline{\mathcal{O}^{\mathrm{per}}_0}} \nabla w_i^{\mathrm{per}} : \nabla w_j^{\mathrm{per}}, \ \ \ 1 \leq i,j \leq 3,
\label{eq:darcyA}
\end{equation} 
where the functions $w_j^{\mathrm{per}}, j = 1,2,3$ are the cell periodic first correctors and solve the following Stokes problems:
 \begin{equation}
 \begin{cases}
 \begin{aligned}
 - \Delta w_j^{\mathrm{per}} + \nabla p_j^{\mathrm{per}} & = e_j \quad \mathrm{in} \quad Q \setminus \overline{\mathcal{O}_0^{\mathrm{per}}} \\
 \mathrm{div} \ w_j^{\mathrm{per}} & = 0 \\
 w_{j}^{\mathrm{per}} & = 0 \quad \mathrm{on} \quad \partial \mathcal{O}^{\mathrm{per}}_0.
 \end{aligned}
 \end{cases}
 \label{eq:correcteur}
\end{equation} 
We note that for fixed $j \in \{1,2,3\}$, Problem \eqref{eq:correcteur} is well-posed in the space $\left[ H^1_0(Q \setminus \overline{\mathcal{O}_0^{\mathrm{per}}}) \right]^3 \times L^{2,\mathrm{per}}(Q \setminus \overline{\mathcal{O}_0^{\mathrm{per}}})/\mathbb{R}$ (see \cite{sanchez1980fluid}). A central point in the proof of the convergence of $p_{\varepsilon}$ to $p_0$ is the construction of an extension of the pressure $p_{\varepsilon}$ in the periodic holes. This extension is constructed in \cite{tartar127incompressible} by a duality argument. 
%It is then noticed in \cite{allaire1989homogenization} that $p_j^{\mathrm{per}}$ may be extended by its mean value on the periodic cell $Q \setminus \overline{\mathcal{O}_0^{\mathrm{per}}}$. 
\medskip

% use this fact and set
%\begin{equation}
%p_j^{\mathrm{per}} = \frac{1}{|Q \setminus \overline{\mathcal{O}%^{\mathrm{per}}_0}|}\int_{Q \setminus \overline{\mathcal{O}%^{\mathrm{per}}_0}} p_j^{\mathrm{per}} \ \mathrm{in} \ \mathcal{O}%^{\mathrm{per}}_0, \ \mathrm{for} \ j=1,2,3.
%\label{eq:extensionpressionperiodique}
%\end{equation}

The corrector equations \eqref{eq:correcteur} can be guessed by a standard two-scale expansion of $u_{\varepsilon}$ and $p_{\varepsilon}$ of the form
 $$u_{\varepsilon} = u_0 \left( x,\frac{x}{\varepsilon} \right) + \varepsilon u_1 \left(x,\frac{x}{\varepsilon} \right) + \varepsilon^2 u_2 \left(x,\frac{x}{\varepsilon}\right) + \varepsilon^3 u_3 \left(x,\frac{x}{\varepsilon} \right)+ \cdots,$$
  $$p_{\varepsilon} = p_0 \left( x,\frac{x}{\varepsilon} \right) + \varepsilon p_1 \left(x,\frac{x}{\varepsilon} \right) + \varepsilon^2 p_2 \left(x,\frac{x}{\varepsilon}\right) + \varepsilon^3 p_3 \left(x,\frac{x}{\varepsilon} \right)+ \cdots$$
  where the functions $u_i(x,\cdot)$ and $p_i(x,\cdot)$ are $Q-$periodic for fixed $x \in \Omega$ (see \cite[Section 7.2]{sanchez1980fluid}). It can be proved that the function $p_0$ is independent of the microscopic variable, that is $p_0(x,\frac{x}{\varepsilon}) = p_0(x)$ for all $x \in \Omega$ (which is coherent with \eqref{eq:darcy}). Besides, the functions $u_0$ and $u_1$ vanish and (we use, as indicated above, the summation convention over repeated indices)
  $$u_2 \left(x,\frac{x}{\varepsilon}\right) = w_j\left(\frac{x}{\varepsilon} \right)(f_j - \partial_j p_0)(x) \quad \mathrm{and} \quad  p_1\left(x,\frac{x}{\varepsilon}\right) = p_j\left(\frac{x}{\varepsilon}\right)(f_j - \partial_j p_0)(x).$$
   We define the remainders
  $$R_{\varepsilon} := u_{\varepsilon} - \varepsilon^2 w_j\left(\frac{\cdot}{\varepsilon} \right)(f_j - \partial_j p_0) \ \ \ \mathrm{and} \ \ \ \pi_{\varepsilon} := p_{\varepsilon} - p_0 - \varepsilon p_j \left(\frac{\cdot}{\varepsilon} \right)(f_j - \partial_j p_0).$$
  The strong convergence $R_{\varepsilon}/\varepsilon^2 \rightarrow 0$ in $L^2(\Omega_{\varepsilon}^{\mathrm{per}})-$norm is proved in \cite[Theorem 1.3]{allaire1991continuity}. An $H^1-$quantitative estimate of this convergence is given in \cite{shen}, provided that $\Omega$ is of class $\mathcal{C}^{2,\alpha}$. We will provide a new $H^2-$convergence estimate when $\mathrm{div}(Af) = 0$ and $f$ is compactly supported in $\Omega$ (see Theorem~\ref{th:convergence} and Remark \ref{re:bord} below).

%\begin{remarque}
\label{re:per}
%When $\Omega = Q$ and $\varepsilon = 1/m, m \in \mathbb{N}^*$, we may consider the problem
%\begin{equation}
%\begin{cases}
%\begin{aligned}
%- \Delta u_{\varepsilon} + \nabla p_{\varepsilon} & = f \ \mathrm{in} \ %\Omega_{\varepsilon}^{\mathrm{per}} \\
%\mathrm{div} (u_{\varepsilon}) & = 0 \\
%u_{\varepsilon} & = 0 \ \mathrm{on} \ \varepsilon\partial \mathcal{O}%^{\mathrm{per}} \cap \Omega \\
%u_{\varepsilon} & \ \mathrm{is} \ Q-\mathrm{periodic}
%\end{aligned}
%\end{cases}
%\label{eq:periodic}
%\end{equation}
%for $f \in L^{2,\mathrm{per}}(Q)$. It can easily be proved thanks to Theorem \ref{th:masmoudi} below that
%$$\left\| u_{\varepsilon} - \varepsilon^2 w_j (\cdot/\varepsilon)(f_j - \partial_j p_0) \right\|_{H^2(\Omega_{\varepsilon}^{\mathrm{per}})} \leq C \varepsilon.$$
%However, we restrict ourselves to equation \eqref{eq:stokes} in order to stick to the framework of \cite{BW}. We want here to point out that the assumptions on $f$ in Theorem \ref{th:convergence} or the macroscopic periodicity in equation \eqref{eq:periodic} make us avoid boundary effects. These boundary effects are much more difficult to treat for Stokes system as for Poisson equation (see \cite[Remark 1.4]{BW} for Poisson equation). 
%\end{remarque} 

\medskip

In what follows, we extend $w_j^{\mathrm{per}}$ by zero in the periodic perforations. The pressure $p_j^{\mathrm{per}}$ is extended by a constant $\lambda_j$ (for example zero) in the perforations.

\subsection{The non-periodic setting}
\label{section12}

We fix a periodic set of perforations as described in the previous subsection. We describe the non-periodic setting (see \cite{BW} for more details). For $k \in \mathbb{Z}^3$ and $\alpha > 0$, we define (see figure \ref{fig:a3})

$$\mathcal{O}_k^{\mathrm{per},+}(\alpha) := \{ x \in Q_k, \ \ d(x,\mathcal{O}_k^{\mathrm{per}}) < \alpha \},$$
and 
$$\mathcal{O}_k^{\mathrm{per},-}(\alpha) := \{ x \in \mathcal{O}_k^{\mathrm{per}}, \ \ d(x,\partial \mathcal{O}_k^{\mathrm{per}}) > \alpha \}.$$
For all $k \in \mathbb{Z}^3$, we fix an open subset $\mathcal{O}_k$ of $Q_k$. We suppose that
the sequence $(\mathcal{O}_k)_{k \in \mathbb{Z}^3}$ satisfies Assumptions \textbf{(A1)-(A5)} below. We define the non periodic set of perforations by $$\mathcal{O} := \bigcup_{k \in \mathbb{Z}^3} \mathcal{O}_k.$$

\vspace{0.2cm}

\noindent\textbf{(A1)} For all $k \in \mathbb{Z}^3$, we have $\mathcal{O}_k \subset \subset Q_k$ and $Q_k \setminus \overline{\mathcal{O}_k}$ is connected.

\vspace{0.2cm} 

\noindent\textbf{(A2)} For all $k \in \mathbb{Z}^3$, the perforation $\mathcal{O}_k$ is Lipschitz continuous.

\vspace{0.2cm}

\noindent\textbf{(A3)} There exists a sequence $(\alpha_k)_{k \in \mathbb{Z}^3} \in \ell^1(\mathbb{Z}^3)$ such that for all $k \in \mathbb{Z}^3$, $\alpha_k > 0$ and we have the following chain inclusion:
$$\mathcal{O}_k^{\mathrm{per},-}(\alpha_k) \subset \mathcal{O}_k \subset \mathcal{O}_k^{\mathrm{per},+}(\alpha_k).$$
We refer to figure \ref{fig:a3} for an illustration of \textbf{(A3)}.

\vspace{0.2cm}

%CONSTANTES DE REGULARITE / PROBLEME EN DIVERGENCE

The assumptions \textbf{(A1)-(A2)} are analogous to the one made on $\mathcal{O}^{\mathrm{per}}$ and guarantee connectedness and some regularity on the perforated domain. Assumption \textbf{(A3)} is the geometric assumption that makes precise that $(\mathcal{O}_k )_{k \in \mathbb{Z}^3}$ is a perturbation of $(\mathcal{O}_k^{\mathrm{per}})_{k \in \mathbb{Z}^3}$.
We recall (see \cite[Lemma A.1 and Lemma A.3]{BW}) that Assumptions \textbf{(A1)-(A3)} imply the following facts:
\begin{itemize}
\item There exists $\delta > 0$ such that for all $k \in \mathbb{Z}^3$, $d(\mathcal{O}_k,\partial Q_k) \geq \delta$. In other words, $\mathcal{O}_k$ is strictly included in $Q_k$, uniformly with respect to $k$. 
\item We have
\begin{equation}
\label{eq:diffsym}
\sum_{k \in \mathbb{Z}^3} |\mathcal{O}_k \Delta \mathcal{O}_k^{\mathrm{per}}| < + \infty
\end{equation} where $\Delta$ stands for the sets symmetric difference operator. 
\end{itemize}
Using the first point above, we can introduce two smooth open sets $Q'$ and $Q''$ such that (see Figure~\ref{fig5})
$Q' \subset \subset Q \subset \subset Q''$ and for all $k \in \mathbb{Z}^3$, 
$
(Q' + k) \cap \mathcal{O} = (Q'' + k) \cap \mathcal{O} = \mathcal{O}_k.
$
 We define, for $k \in \mathbb{Z}^3$, 
 \begin{equation} 
 Q'_k := Q' + k \quad \mathrm{and} \quad 
Q''_k := Q'' + k.
\label{eq:Q''}
\end{equation}
 The sets $Q'_k$ and $Q''_k$, $k \in \mathbb{Z}^3$ will be used several times in the sequel.

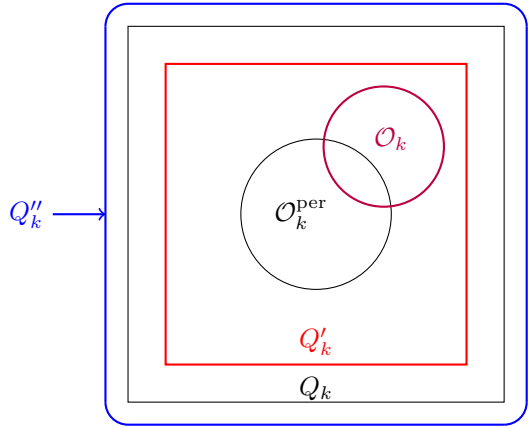
\begin{figure}[h!]
\centering
\begin{tikzpicture}
\draw (0,0)-- (5,0) -- (5,5) -- (0,5) -- cycle;
\draw (2.5,-0.1) node[above]{$Q_k$};
\draw [red,thick] (0.5,0.5)--(4.5,0.5)--(4.5,4.5)--(0.5,4.5) --cycle;
\draw[red,thick] (2.5,0.5) node[above]{$Q'_k$};
\draw (2.5,2.5) circle(1);
\draw[thick,purple] (3.4,3.4) circle(0.8);
\draw (2.3,2.5) node[]{$\mathcal{O}_k^{\mathrm{per}}$};
\draw[purple,thick] (3.5,3.5) node[]{$\mathcal{O}_k$};
\draw[blue,thick] (0,-0.3)--(5,-0.3);
\draw[blue,thick] (-0.3,0)--(-0.3,5);
\draw[blue,thick] (0,5.3)--(5,5.3);
\draw[blue,thick] (5.3,5)--(5.3,0);
\draw[blue,thick] (-0.3,0) arc(180:270:0.3);
\draw[blue,thick] (5,-0.3) arc(270:360:0.3);
\draw[blue,thick] (5.3,5) arc(0:90:0.3);
\draw[blue,thick] (0,5.3) arc(90:180:0.3);
\draw[blue,thick,->] (-1,2.5)--(-0.3,2.5);
\draw[blue] (-1,2.5) node[left]{$Q''_k$};
\end{tikzpicture}
\caption{A cell $Q_k$, $k \in \mathbb{Z}^3$}
\label{fig5}
\end{figure}

\vspace{0.2cm}

\noindent \textbf{(A4)} This assumption is divided into two sub-assumptions \textbf{(A4)$_0$} and \textbf{(A4)$_1$}.

\medskip

\noindent \textbf{(A4)$_0$} For all $1 < q < +\infty$, there exists a constant $C_q^0 > 0$ such that for all $k \in \mathbb{Z}^3$, the problem 
\begin{equation}
\begin{cases}
\begin{aligned}
\mathrm{div} \ v & = f \quad \mathrm{in} \quad Q_k \setminus \overline{\mathcal{O}_k} \\
v & = 0 \quad  \mathrm{on} \quad \partial \left[ Q_k \setminus \overline{\mathcal{O}_k} \right]
\end{aligned}
\end{cases}
\label{A4}
\end{equation}
with $f \in L^q(Q_k \setminus \overline{\mathcal{O}_k})$
completed with the compatibility condition
\begin{equation}
\int_{Q_k \setminus \overline{\mathcal{O}}_k} f = 0
\label{eq:comp}
\end{equation}
 admits a solution $v$ such that $v \in \left[ W^{1,q}(Q_k \backslash \overline{\mathcal{O}_k}) \right]^3$ and
\begin{equation}
\| v \|_{\left[ W^{1,q}(Q_k \backslash \overline{\mathcal{O}_k}) \right]^3} \leq C_q^0 \| f \|_{L^q(Q_k \backslash \overline{\mathcal{O}_k})}.
\label{eq:introdiv}
\end{equation}

\medskip

\noindent \textbf{(A4)$_1$} For all $1 < q < +\infty$, there exists a constant $C_q^1 > 0$ such that for all $k \in \mathbb{Z}^3$, Problem \eqref{A4} with $f \in W^{1,q}_0(Q_k \setminus \overline{\mathcal{O}_k})$ completed with the compatibility condition \eqref{eq:comp} admits a solution $v$ such that $v \in \left[ W^{2,q}_0(Q_k \setminus \overline{\mathcal{O}_k}) \right]^3$ and 
\begin{equation}
\| v \|_{\left[ W^{2,q}(Q_k \backslash \overline{\mathcal{O}_k}) \right]^3} \leq C_q^1 \| f \|_{W^{1,q}(Q_k \backslash \overline{\mathcal{O}_k})}.
\label{eq:A41}
\end{equation}

\vspace{0.2cm}

\noindent \textbf{(A5)} For all $1 < q < +\infty$, there exists a constant $C_q > 0$ such that for all $k \in \mathbb{Z}^3$, if $(v,p) \in \left[ W^{1,q}(Q_k \backslash \overline{\mathcal{O}_k}) \right]^3 \times L^{q}(Q_k \backslash \overline{\mathcal{O}_k})$ is solution to the Stokes problem 
\begin{equation}
\begin{cases}
\begin{aligned}
- \Delta v + \nabla p & = f \quad \mathrm{in} \quad Q_k'' \setminus \overline{\mathcal{O}_k} \\
\mathrm{div} \ v & = 0 \\
v & = 0 \quad \mathrm{on} \quad \partial \mathcal{O}_k
\end{aligned}
\end{cases}
\label{A5}
\end{equation}
with $f \in L^q(Q_k'' \backslash \overline{\mathcal{O}_k})$, then  $(v,p) \in \left[ W^{2,q}(Q_k \backslash \overline{\mathcal{O}_k}) \right]^3 \times W^{1,q}(Q_k \backslash \overline{\mathcal{O}_k})$ and
\begin{equation}
\| v \|_{\left[ W^{2,q}(Q_k \backslash \overline{\mathcal{O}_k})\right]^3} + \| p \|_{W^{1,q}(Q_k \backslash \overline{\mathcal{O}_k})} \leq C_q \big[ \| f \|_{L^q(Q''_k \backslash \overline{\mathcal{O}_k})^3} + \| v \|_{\left[ W^{1,q}(Q''_k \backslash \overline{\mathcal{O}_k})\right]^3} + \| p \|_{L^q(Q''_k \backslash \overline{\mathcal{O}_k})} \big].
\label{eq:introreg}
\end{equation}

\begin{remarque}
\label{re:galdi} 
For each fixed $k \in \mathbb{Z}^3$, the estimates \eqref{eq:introdiv} and \eqref{eq:A41} are satisfied with constants~$C_{q,k}^i$, $i=0,1$, depending on $k$, see \cite[Theorem III.3.3]{galdi2011introduction}. Similarly, as long as $\mathcal{O}_k$ is of class $\mathcal{C}^2$, \eqref{eq:introreg} is satisfied when $k$ is fixed (see~\cite[Theorem IV.5.1]{galdi2011introduction}). Assumptions \textbf{(A4)-(A5)} require that the constants appearing in \eqref{eq:introdiv}, \eqref{eq:A41} and \eqref{eq:introreg} are uniform with respect to $k \in \mathbb{Z}^3$. 

%We also point out that Assumption \textbf{(A4)} implies the following fact: if $f \in H^m_0(Q_k \setminus \overline{\mathcal{O}_k})$, $m \geq 1$, and $a = 0$, then Problem \eqref{A4} admits a solution $v \in \left[ H^{m+1}_0(Q_k \backslash \overline{\mathcal{O}_k}) \right]^3$ (see \cite[Theorem III.3.3]{galdi2011introduction}) such that
%\begin{equation}
%\label{eq:regaldi}
%\|v \|_{\left[ H^{m+1}(Q_k \setminus \overline{\mathcal{O}}_k) \right]^3} \leq C \| f \|_{H^{m}(Q_k \setminus \overline{\mathcal{O}_k)} },
%\end{equation}
%where $C$ is independent of $f$ and $k$.
\end{remarque}

\begin{figure}
\centering
\begin{subfigure}{.5\textwidth}
\centering
\begin{tikzpicture}[scale=1.2]\footnotesize
 \pgfmathsetmacro{\xone}{-0.2}
 \pgfmathsetmacro{\xtwo}{5.2}
 \pgfmathsetmacro{\yone}{-0.2}
 \pgfmathsetmacro{\ytwo}{5.2}
\begin{scope}<+->;
  \draw[step=5cm,gray,very thin] (\xone,\yone) grid (\xtwo,\ytwo);
\end{scope}

\fill[gray!20] (2.5,2.5) ellipse (1.9cm and 1.9cm);
\fill[gray!40] (2.5,2.5) ellipse (1.1cm and 1.1cm);
\draw[red,thick] (2.5,2.5) ellipse (1.5cm and 1.5cm);
\draw[red,dashed] (2.5,2.5) ellipse (1.9cm and 1.9cm);
\draw[red,dashed] (2.5,2.5) ellipse (1.1cm and 1.1cm);
\draw[<->, thick] (4.4,2.5) -- (4,2.5);
\draw[<->, thick] (4,2.5) -- (3.6,2.5);

\draw (2.5,2.5) node[]{$\mathcal{O}_k^{\mathrm{per},-}(\alpha)$};
\draw (3.8,2.7) node[]{$\alpha$};
\draw (4.2,2.7) node[]{$\alpha$};

\draw[<-] (1.1,2) -- (-0.5,2);
\draw (-1.1,2) node[]{$\mathcal{O}_k^{\mathrm{per}}$};
\draw[<-] (0.9,1.5) -- (-0.5,1);
\draw (-1.1,1) node[]{$\mathcal{O}_k^{\mathrm{per},+}(\alpha)$};
\end{tikzpicture}
\caption{Illustration of \textbf{(A3)}}
\label{fig:a3}
\end{subfigure}%
\begin{subfigure}{.5\textwidth}
\centering
\begin{tikzpicture}[scale=.15]\footnotesize
 \pgfmathsetmacro{\xone}{-21}
 \pgfmathsetmacro{\xtwo}{21}
 \pgfmathsetmacro{\yone}{-21}
 \pgfmathsetmacro{\ytwo}{21}
 
\begin{scope}<+->;
  \draw[step=2cm,gray,very thin] (\xone,\yone) grid (\xtwo,\ytwo);
\end{scope}

\foreach \x in {-20,-18,-16,-14,-12,-10,-8,-6,-4,-2,0,2,4,6,8,10,12,14,16,18}
    \foreach \y in {-20,-18,-16,-14,-12,-10,-8,-6,-4,-2,0,2,4,6,8,10,12,14,16,18}
        \fill[gray!60] (\x+0.8,\y+0.8) ellipse (0.4cm and 0.5cm);
        
\foreach \x in {-20,-18,-16.1,-14.2,-11.6,-9.4,-7.6,-6.2,-4.1,-2.2,0.5,2.4,4.3,6.7,8.6,10.1,12.2,14.1,16,18}
    \foreach \y in {-20,-18,-16,-13.7,-11.5,-10.2,-7.5,-6.2,-4.2,-2.1,0.1,2,3.8,6.5,8,10.2,12.2,14.1,15.9,18}
        \fill[gray!100] (\x+0.8,\y+0.8) ellipse (0.4cm and 0.5cm);

\draw[<->] (-20,-20.5) -- (-18,-20.5);
\draw (-19,-22) node[]{$\varepsilon$};
\draw[<->] (-20.5,-20) -- (-20.5,-18);
\draw (-22,-19) node[]{$\varepsilon$};
\end{tikzpicture}
\caption{A non periodically perforated grid}
\label{fig:nonper}
\end{subfigure}
\caption{The non-periodic setting}
\end{figure}
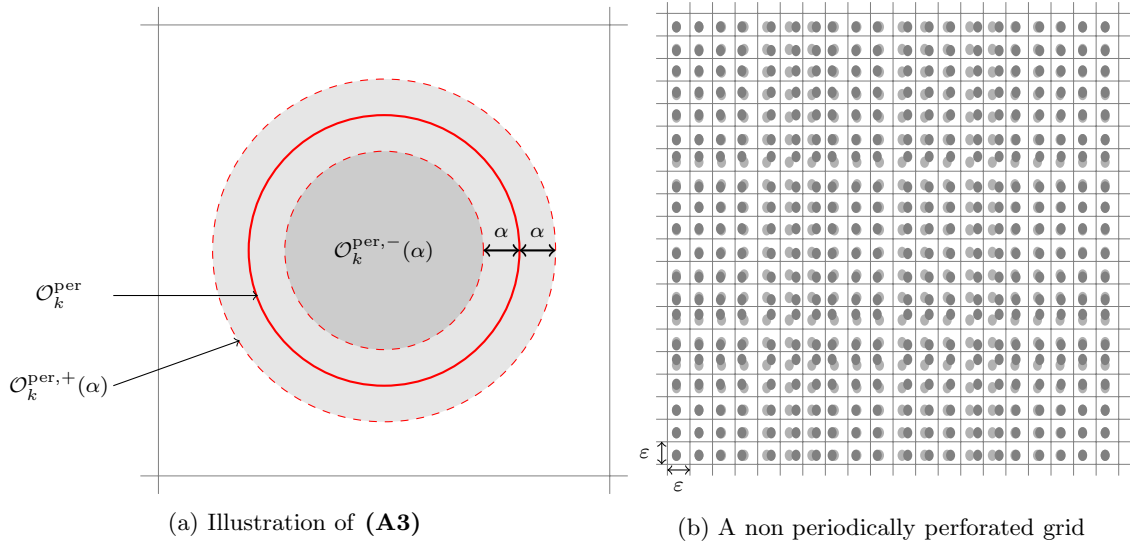

%Some consequences of the assumptions

%Domaine perforé, système de Stokes 

%fix a periodic configuration, smoothness of the solution, extension de w_j et p_j.

Assumptions \textbf{(A4)-(A5)} are the weakest possible given our method of proof. However, they are associated to PDEs and we would like a somewhat more geometric interpretation of these assumptions, in the spirit of \textbf{(A3)}. In fact, we may replace \textbf{(A4)-(A5)} by the  likely stronger (but geometric) Assumptions \textbf{(A4)'-(A5)'} below. 

\medskip

We suppose that there exist $r > 0$ and $M > 0$ such that for all $k \in \mathbb{Z}^3$ and for all $x \in \partial \mathcal{O}_k$, there exists $\zeta_x : U_x \rightarrow \mathbb{R}$ where $U_x \subset \mathbb{R}^2$, $0 \in U_x$ and $r_x > r$ such that, after eventually rotating and/or translating the local coordinate system, we have that $\zeta_x(0) = 0$ and
\begin{equation}
\left( Q_k \setminus \overline{\mathcal{O}_k} \right) \cap B(x,r_x) = \{(y_1,y_2,y_3) \in \mathbb{R}^3, y_3 > \zeta_x(y_1,y_2) \quad \mathrm{and} \quad (y_1,y_2) \in U_x \}.
\end{equation}
We assume the following uniform regularity properties:

\medskip

\noindent \textbf{(A4)'} The functions $\zeta_x, x \in \partial \mathcal{O}_k$ are Lipschitz functions with Lipschitz constant $\|\zeta_x\|_{\mathrm{Lip}(U_x)}$ satisfying $\|\zeta_x\|_{\mathrm{Lip}(U_x)} \leq M$.  

\medskip

\noindent \textbf{(A5)'} The functions $\zeta_x, x \in \partial \mathcal{O}_k$ are of class $\mathcal{C}^2$ and satisfy $\nabla \zeta_x(0)=0$ with the estimate $\| \zeta_x \|_{W^{2,\infty}(U_x)} \leq M$. 

\medskip

\noindent In Assumptions \textbf{(A4)'-(A5)'} above, we emphasize that $M$ is independent of $k$ and $x$. 

\medskip

We prove in Appendix \ref{sec:geom} that Assumptions \textbf{(A3)} and \textbf{(A4)'} imply Assumption \textbf{(A4)} and that Assumptions \textbf{(A3)} and \textbf{(A5)'} imply Assumption \textbf{(A5)}. We also note that \textbf{(A5)'} implies \textbf{(A4)'}.

%\begin{remarque}
%The above assumption is not necessarily satisfied for $k$ fixed i.e we can find locally Lipschitz domains for which each point of the boundary is represented by the graph of a function on a ball $B(x,r_x)$ wihth $r_x$ uniformly bounded from below. We can think for example of a boundary portion defined by triangles of basis $1/2^n$ and height $1/2^n$ around 0. Note that the triangles form perpendicular angles. The result seems also to be wrong for $\mathcal{C}^2$ boundary portions.
%\end{remarque}

\medskip

\begin{exemple}
We give some examples of perforations satisfying \textbf{(A1)-(A5)}:
\begin{itemize}
\item Compactly supported perturbations, that is, we change $\mathcal{O}_k^{\mathrm{per}}$ in a finite number of cells $Q_k$;
\item We remove a finite number of perforations;
\item We make $\ell^1-$translations of the periodic perforations that is we choose a sequence $(\delta_k)_{k \in \mathbb{Z}^3}$ such that $\delta_k \in \mathbb{R}^3$, $\sum_{k \in \mathbb{Z}^3} |\delta_k| < + \infty$ and for all $k \in \mathbb{Z}^3$, $\mathcal{O}_k \subset \subset Q_k$ and $\mathcal{O}_k = \mathcal{O}_k^{\mathrm{per}} + \delta_k$. 
\item We give on Figure \ref{fig6} in Appendix \ref{sec:geom} some counter-examples to Assumptions \textbf{(A3)-(A4)'-(A5)'}.

\end{itemize}
\end{exemple}

%Since $\mathcal{O}_k \subset \subset Q_k$, we get the existence of $Q'_k$ such that $\mathcal{O}_k \subset \subset Q'_k \subset \subset Q_k$. Besides, by hypothesis on the non-periodic perforations (Lemma A.3 of Poisson), the open set $Q'_k$ may be chosen uniformly in $k$ i.e of the form $Q' + k$ where $Q' \subset \subset Q$. 

%There exists a smooth open set $Q''$ such that $Q'' \subset \subset Q$ and for all $k \in \mathbb{Z}^3$, $Q_k \subset \subset Q''_k$ and $Q''_k \cap \mathcal{O} = \mathcal{O}_k$ (see figure \ref{fig5}) where $Q''_k = Q'' + k$.

\begin{remarque}
The assumption $\mathcal{O}_k \subset \subset Q_k$ is automatically implied by \textbf{(A3)} except for a finite number of cells. Dropping it would change some technical details but not the results of the paper.
\end{remarque}

\paragraph{The perforated domain.} We recall that $\Omega$ is a smooth bounded domain of $\mathbb{R}^3$. We denote 
\begin{equation}
Y_{\varepsilon} := \{k \in \mathbb{Z}^3, \varepsilon \mathcal{O}_k \subset Q_k\}
\label{eq:Y_eps}
\end{equation} and define (see Figure \ref{fig:nonper}) 
\begin{equation}
\Omega_{\varepsilon} := \Omega \setminus \bigcup_{k \in Y_{\varepsilon}} \varepsilon \overline{\mathcal{O}_k}.
\label{eq:Omega_eps}
\end{equation}
 We notice that $\Omega_{\varepsilon}$ is a bounded, locally Lipschitz and connected open subset of $\mathbb{R}^3$.

 For $f \in \left[L^2(\Omega) \right]^3$, there is a unique solution $(u_{\varepsilon},p_{\varepsilon})\in \left[H^1_0(\Omega_{\varepsilon}) \right]^3 \times L^2(\Omega_{\varepsilon})/\mathbb{R}$ to the Stokes system 
 \begin{equation}
\begin{cases}
\begin{aligned}
- \Delta u_{\varepsilon} + \nabla p_{\varepsilon} & = f \quad \mathrm{in} \quad \Omega_{\varepsilon}  \\
\mathrm{div} \ u_{\varepsilon} & = 0 \\
u_{\varepsilon} & = 0 \quad \mathrm{on} \quad \partial \Omega_{\varepsilon}.
\end{aligned}
\end{cases}
\label{eq:stokesnonper}
\end{equation}
In the sequel, we study the homogenization of $(u_{\varepsilon},p_{\varepsilon})$.  

\section{Results}
\label{section2}

The first result concerns the existence of the first order correctors. We can perform a two scale expansion of the form
\begin{equation}
u_{\varepsilon}(x) = \varepsilon^2 \left[ u_2 \left(x,\frac{x}{\varepsilon} \right) + \varepsilon u_3 \left(x,\frac{x}{\varepsilon} \right) + \cdots \right], \ \ p_{\varepsilon}(x) = p_0(x) + \varepsilon p_1 \left(x,\frac{x}{\varepsilon} \right) + \cdots
\label{eq:deuxechellesnonper}
\end{equation}
to \eqref{eq:stokesnonper} and find that 
\begin{equation}
u_2(x,y) = \sum_{j=1}^3 w_j(y)(f_j - \partial_j p_0)(x) \quad \mathrm{and} \quad p_1(x,y) = \sum_{j=1}^3 p_j(y)(f_j - \partial_j p_0)(x)
\label{eq:deuxechellesnonper2}
\end{equation}
where $f_1,f_2,f_3$ denote the components of the vector field $f$, $(w_j,p_j)$ is solution to the following Stokes system for $j=1,2,3$:
\begin{equation}
\begin{cases}
\begin{aligned}
- \Delta w_j + \nabla p_j & = e_j \ \ \mathrm{in} \quad \mathbb{R}^3 \setminus \overline{\mathcal{O}} \\
\mathrm{div} \ w_j & = 0 \\ 
w_{j} & = 0 \quad \mathrm{on} \quad \partial \mathcal{O}.
\end{aligned}
\end{cases}
\label{eq:cor}
\end{equation}
and $p_0$ is given by the Darcy's law \eqref{eq:darcy}.

\begin{theoreme}[Existence of correctors]
Suppose that Assumptions \textbf{(A1)-(A3)} and \textbf{(A4)$_0$} are satisfied. For all $j \in \{1,2,3\}$, System \eqref{eq:cor} admits a solution $(w_j,p_j)$ of the form
$$w_j = w_j^{\mathrm{per}} + \widetilde{w_j} \quad \mathrm{and} \quad p_j = p_j^{\mathrm{per}} + \widetilde{p_j}$$ where
$(\widetilde{w_j},\widetilde{p_j}) \in \left[ H^1(\mathbb{R}^3 \setminus \overline{\mathcal{O}}) \right]^3 \times  L^2_{\mathrm{loc}}(\mathbb{R}^3 \setminus \overline{\mathcal{O}}).$
Moreover, we have the following estimate
$$\| \widetilde{p_j} - \langle \widetilde{p_j} \rangle \|_{L^2(\frac{1}{\varepsilon}\Omega_\varepsilon)} \leq C \varepsilon^{- 1},$$
where $C$ is a constant independent of $\varepsilon$ and $\langle\widetilde{ p_j} \rangle$ denotes the mean value of $\widetilde{p_j}$ on $\frac{1}{\varepsilon} \Omega_{\varepsilon}$.
\label{th:cor}
\end{theoreme}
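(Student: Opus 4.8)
The plan is to construct the corrector $(w_j, p_j)$ as a periodic corrector plus a perturbation, exactly as in the analogous result for the Poisson equation in \cite{BW}. First I would write $w_j = w_j^{\mathrm{per}} + \widetilde{w_j}$ and $p_j = p_j^{\mathrm{per}} + \widetilde{p_j}$ and derive the equation satisfied by $(\widetilde{w_j}, \widetilde{p_j})$ on $\mathbb{R}^3 \setminus \overline{\mathcal{O}}$: subtracting \eqref{eq:correcteur} (extended by zero/constant into the periodic holes as described after Remark~\ref{re:per}) from \eqref{eq:cor}, one gets a Stokes system for $\widetilde{w_j}$ with zero right-hand side in the fluid part but with a nonzero boundary condition $\widetilde{w_j} = -w_j^{\mathrm{per}}$ on $\partial\mathcal{O}$ together with a divergence constraint that is not exactly zero (since $w_j^{\mathrm{per}}$ extended by zero is divergence-free across $\partial\mathcal{O}^{\mathrm{per}}$ but not across $\partial\mathcal{O}$). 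The key point is that both the boundary data and the divergence source are supported on the symmetric difference region $\bigcup_k (\mathcal{O}_k \Delta \mathcal{O}_k^{\mathrm{per}})$, whose total measure is finite by \eqref{eq:diffsym}. This is what makes a perturbation argument in $L^2$/$H^1$ possible.

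The main construction proceeds cell by cell. In each cell $Q_k$ I would use Assumption \textbf{(A4)$_0$} (the uniform Bogovskii-type estimate \eqref{eq:introdiv}) to correct the divergence: solve $\mathrm{div}\, v_k = g_k$ in $Q_k \setminus \overline{\mathcal{O}_k}$ with $v_k = 0$ on the boundary, where $g_k$ encodes the divergence defect localized to cell $k$ (with the right-hand side corrected by an appropriate cutoff so the compatibility condition \eqref{eq:comp} holds), obtaining $\|v_k\|_{W^{1,2}} \leq C \|g_k\|_{L^2}$ with $C$ uniform in $k$. Summing, and using that the $L^2$-norms of the $g_k$ are square-summable (controlled by $|\mathcal{O}_k \Delta \mathcal{O}_k^{\mathrm{per}}|$ plus the $\ell^1$-sequence $\alpha_k$ from \textbf{(A3)}, via the gradient bounds on $w_j^{\mathrm{per}}$ near $\partial\mathcal{O}_0^{\mathrm{per}}$), yields a global function with controlled $H^1$ norm that absorbs the divergence defect. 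One then reduces to a genuine divergence-free Stokes problem with $H^1$ data, posed in the exterior-type domain $\mathbb{R}^3 \setminus \overline{\mathcal{O}}$, and solves it by the Lax–Milgram theorem in the space of divergence-free $H^1$ vector fields vanishing on $\partial\mathcal{O}$, recovering the pressure $\widetilde{p_j} \in L^2_{\mathrm{loc}}$ by de Rham's theorem. The energy estimate gives $\widetilde{w_j} \in [H^1(\mathbb{R}^3 \setminus \overline{\mathcal{O}})]^3$.

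For the pressure estimate, note that $\widetilde{p_j}$ is only in $L^2_{\mathrm{loc}}$, not globally $L^2$ — the content is to bound its oscillation on $\tfrac{1}{\varepsilon}\Omega_\varepsilon$, which grows like a volume $\sim \varepsilon^{-3}$. I would use the standard inequality relating $\|\widetilde{p_j} - \langle \widetilde{p_j}\rangle\|_{L^2(\omega)}$ to $\|\nabla \widetilde{p_j}\|_{H^{-1}(\omega)}$ on a John domain $\omega = \tfrac{1}{\varepsilon}\Omega_\varepsilon$, and bound the constant's growth in $\varepsilon$; since $\nabla \widetilde{p_j} = \Delta \widetilde{w_j}$ (plus the localized source terms) is controlled globally in $H^{-1}$ by the already-established $H^1$ bound on $\widetilde{w_j}$, a covering/scaling argument on $\tfrac{1}{\varepsilon}\Omega_\varepsilon$ (which contains $O(\varepsilon^{-3})$ unit cells) gives the $\varepsilon^{-1}$ rate. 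The main obstacle I anticipate is the bookkeeping of the divergence correction: making sure the cell-by-cell Bogovskii corrections, each requiring a compatibility condition \eqref{eq:comp} per cell, patch together into a globally $H^1$ (and in particular single-valued, with no pressure jumps) function, and carefully tracking that the $\ell^1$ bound on $(\alpha_k)$ and \eqref{eq:diffsym} really do produce a square-summable family of local data — this is where Assumption \textbf{(A4)$_0$} and the uniform geometry from \textbf{(A3)} are essential, and it is the heart of adapting the \cite{BW} method from the scalar Poisson case to the Stokes system.
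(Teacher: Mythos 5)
Your plan follows the paper's architecture: perturbative ansatz $w_j = w_j^{\mathrm{per}}+\widetilde{w_j}$, a cell-by-cell Bogovskii correction resting on Assumption \textbf{(A4)$_0$}, Lax--Milgram in the divergence-free subspace of $\left[H^1_0(\mathbb{R}^3\setminus\overline{\mathcal{O}})\right]^3$, de Rham's theorem for the pressure, and a scaled $H^{-1}\!\to\!L^2$ duality (Lemma~\ref{lem4}) to obtain the $\varepsilon^{-1}$ pressure bound. However, your description of the equation satisfied by $\widetilde{w_j}$ contains an error that hides the hardest preparatory step of the proof, and would send you in the wrong direction if carried out literally.

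The divergence constraint on $\widetilde{w_j}$ is \emph{exactly} zero: $w_j^{\mathrm{per}}$ vanishes on $\partial\mathcal{O}^{\mathrm{per}}$, so its extension by zero into the periodic holes lies in $H^1_{\mathrm{loc}}(\mathbb{R}^3)$ and is divergence-free on all of $\mathbb{R}^3$; subtracting two divergence-free fields on $\mathbb{R}^3\setminus\overline{\mathcal{O}}$ gives $\mathrm{div}\,\widetilde{w_j}=0$ there. The cell-by-cell divergence correction you describe is indeed used, but not to repair $\widetilde{w_j}$'s divergence: it repairs the divergence of the \emph{lift} $\phi_j$ of the boundary data $\widetilde{w_j}=-w_j^{\mathrm{per}}$ on $\partial\mathcal{O}$ (Lemma~\ref{lem:fonctionaux} and Lemma~\ref{lem:divdiv}), so that one can subtract a divergence-free extension before applying Lax--Milgram in the divergence-free space. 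The real defect in the momentum equation is the force $T_j := e_j + \Delta w_j^{\mathrm{per}} - \nabla p_j^{\mathrm{per}}$, and it is not merely a bulk term supported on the symmetric difference: $\Delta w_j^{\mathrm{per}}$ and $\nabla p_j^{\mathrm{per}}$ (taken in the distributional sense across the extension) carry contributions supported on the two-dimensional surface $\partial\mathcal{O}^{\mathrm{per}}$, namely normal-derivative jumps of $w_j^{\mathrm{per}}$ and jumps of $p_j^{\mathrm{per}}$. Establishing $T_j \in \left[W^{-1,q'}(\mathbb{R}^3\setminus\overline{\mathcal{O}})\right]^3$ is the crucial preparatory Lemma~\ref{lem:regularite_T}: one integrates by parts and bounds the surface integrals $\int_{\partial\mathcal{O}^{\mathrm{per}}}|\phi|$ by the trace inequality $\|\phi\|_{L^1(\partial\mathcal{O}_k^{\mathrm{per}})}\leq C\|\phi\|_{W^{1,1}(\mathcal{O}_k^{\mathrm{per}})}$ (uniform in $k$ by translation invariance), then uses that test functions vanish on $\mathcal{O}$ so only $\mathcal{O}_k^{\mathrm{per}}\setminus\overline{\mathcal{O}_k}$ contributes, and finally sums using \eqref{eq:diffsym}. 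Your proposal does not account for these surface distributions and would either miss $T_j$ entirely or treat it as if it were an $L^2$ function on the symmetric-difference set, so the residual Stokes system you would feed into Lax--Milgram lacks the correct right-hand side $T_j + \Delta v_j$.
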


We define $$R_{\varepsilon} := u_{\varepsilon} - \varepsilon^2 \sum_{j=1}^3 w_j \left( \frac{\cdot}{\varepsilon} \right)(f_j - \partial_j p_0) \ \ \mathrm{and} \ \ \pi_{\varepsilon} := p_{\varepsilon} - p_0 - \varepsilon \sum_{j=1}^3 p_j \left( \frac{\cdot}{\varepsilon} \right)(f_j - \partial_j p_0).$$ Following the ideas of the proof of \cite[Theorem 1.3]{allaire1991continuity}, we can prove under the assumption $f \in \left[ W^{3,\infty}(\Omega) \right]^3$ that $R_{\varepsilon}/\varepsilon^2 \underset{\varepsilon \rightarrow 0}{\longrightarrow} 0$ in the non-periodic setting for the $\left[L^2(\Omega)\right]^3-$norm (where it is understood that $u_{\varepsilon}$ and $w_j, j=1,2,3$ are extended by zero in the perforations). This fact, though relevant because it makes \eqref{eq:deuxechellesnonper} rigorous, is not strong enough to justify the construction of the non-periodic correctors $(w_j,p_j), j=1,2,3$. Indeed, if we set $$R_{\varepsilon}^{\mathrm{per}} := u_{\varepsilon} - \varepsilon^2 \sum_{j=1}^3 w_j^{\mathrm{per}} \left( \frac{\cdot}{\varepsilon} \right)(f_j - \partial_j p_0),$$
we notice that $$R_{\varepsilon} = R_{\varepsilon}^{\mathrm{per}} - \varepsilon^2 \sum_{j=1}^3\widetilde{w_j} \left( \frac{\cdot}{\varepsilon} \right)(f_j - \partial_j p_0).$$
Since $\widetilde{w_j} \in \left[ L^2(\mathbb{R}^3 \setminus \overline{\mathcal{O}}) \right]^3$, one has for $j = 1,2,3$:
$$
\begin{aligned}\left\| \widetilde{w_j} \left(\frac{\cdot}{\varepsilon} \right)(f_j - \partial_j p_0) \right\|_{\left[L^2(\Omega_{\varepsilon})\right]^3} & = \varepsilon^{\frac{3}{2}} \left\| \widetilde{w_j}(f_j - \partial_j p_0)(\varepsilon \cdot) \right\|_{\left[L^2(\frac{1}{\varepsilon}\Omega_{\varepsilon})\right]^3} \\ & \leq \varepsilon^{\frac{3}{2}} \| \widetilde{w_j} \|_{\left[L^2(\mathbb{R}^3)\right]^3} \| f_j - \partial_j p_0 \|_{L^{\infty}(\Omega)} = C \varepsilon^{\frac{3}{2}}.
\end{aligned}$$ 
Thus $R_{\varepsilon}^{\mathrm{per}} / \varepsilon^2 = R_{\varepsilon}/\varepsilon^2 + O(\varepsilon^{3/2})$. This proves that $R_{\varepsilon}^{\mathrm{per}}/\varepsilon^2 \underset{\varepsilon \rightarrow 0}{\longrightarrow} 0$ for the $\left[L^2(\Omega)\right]^3-$norm. So, using $w_j^{\mathrm{per}}$ instead of $w_j$ does not change the convergence of $u_{\varepsilon}$ to its first order asymptotic expansion. 

\medskip

Yet, since $w_j$ and $p_j, j=1,2,3$ are the \textit{ad hoc} correctors for the non-periodic setting, there must be situations highlighting that the approximation of $u_{\varepsilon}$ (resp. $p_{\varepsilon}$) by $\varepsilon^2 w_j \left( \cdot / \varepsilon \right)(f_j - \partial_j p_0)$ (resp. $p_0 + \varepsilon p_j \left( \cdot / \varepsilon \right)(f_j - \partial_j p_0)$) is improved in some sense when we use $w_j$ instead of $w_j^{\mathrm{per}}$. 
We exhibit in Theorem \ref{th:convergence} such a situation (see Remark \ref{re:utilite}). 

\medskip

Before stating Theorem \ref{th:convergence}, we obtain in Theorem \ref{th:masmoudi} $H^2-$estimates for the solution of a Stokes system posed in $\Omega_{\varepsilon}$ (see \cite[Theorem 4.1]{masmoudi2004some} for the periodic case). 

\begin{theoreme}[Estimates for a Stokes problem] Suppose that Assumptions \textbf{(A4)$_0$} and \textbf{(A5)} are satisfied.
Let $f \in \left[ L^2(\Omega_{\varepsilon}) \right]^3$ and $(u,p) \in \left[ H^1_0(\Omega_{\varepsilon}) \right]^3 \times L^2(\Omega_{\varepsilon})/ \mathbb{R}$ be solution of
\begin{equation}
\begin{cases}
\begin{aligned}
- \Delta u_{\varepsilon} + \nabla p_{\varepsilon}  & = f \quad \mathrm{in} \quad \Omega_{\varepsilon} \\
\mathrm{div}(u_{\varepsilon}) & = 0 \\
u_{\varepsilon} & = 0 \quad \mathrm{on} \quad \partial \Omega_{\varepsilon}.
\end{aligned}
\end{cases}
\label{eq:masmoudi1}
\end{equation}
Then $(u_{\varepsilon},p_{\varepsilon}) \in \left[H^2(\Omega_{\varepsilon}) \right]^3 \times H^1(\Omega_{\varepsilon})/\mathbb{R}$ and there exists a constant $C > 0$ such that for any domain $\Omega'' \subset \subset \Omega$ and all $\varepsilon < \varepsilon_0(\Omega'')$,
$$
\begin{aligned}
\|D^2 u_{\varepsilon}\|_{\left[ L^2(\Omega'' \cap \Omega_{\varepsilon}) \right]^3} + \varepsilon^{-1} \| \nabla u_{\varepsilon}\|_{\left[ L^2(\Omega_{\varepsilon}) \right]^3} & + \varepsilon^{-2} \| u_{\varepsilon}\|_{\left[ L^2(\Omega_{\varepsilon}) \right]^3} \\ & + \|\nabla p_{\varepsilon}\|_{L^2(\Omega'' \cap \Omega_{\varepsilon})^3} + \| p_{\varepsilon}\|_{L^2(\Omega_{\varepsilon})/\mathbb{R}} \leq C \|f \|_{\left[ L^2(\Omega_{\varepsilon}) \right]^3}.
\end{aligned}
$$
Furthermore, the couple $(u_{\varepsilon},p_{\varepsilon})$ is unique in $\left[H^1(\Omega_{\varepsilon}) \right]^3 \times L^2(\Omega_{\varepsilon})/\mathbb{R}$.
\label{th:masmoudi}
\end{theoreme}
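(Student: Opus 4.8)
\textbf{Proof strategy for Theorem~\ref{th:masmoudi}.}

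The plan is to localize and reduce to the uniform cell-wise estimates contained in Assumption \textbf{(A5)}. First, existence and uniqueness of $(u_\varepsilon,p_\varepsilon)$ in $[H^1_0(\Omega_\varepsilon)]^3 \times L^2(\Omega_\varepsilon)/\mathbb{R}$ is classical (Lax--Milgram on the divergence-free subspace, plus reconstruction of the pressure via the De~Rham/Ne\v{c}as inequality, whose constant we will control using the uniform Bogovski\u{\i} bound \textbf{(A4)$_0$}); uniqueness in the larger space $[H^1(\Omega_\varepsilon)]^3 \times L^2(\Omega_\varepsilon)/\mathbb{R}$ follows from the same argument applied to the difference of two solutions, which has zero boundary data and zero force. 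The two global bounds $\varepsilon^{-2}\|u_\varepsilon\|_{L^2(\Omega_\varepsilon)} + \varepsilon^{-1}\|\nabla u_\varepsilon\|_{L^2(\Omega_\varepsilon)} \lesssim \|f\|_{L^2(\Omega_\varepsilon)}$ come from testing the equation with $u_\varepsilon$, using the Poincar\'e inequality in the perforated domain (the constant scales like $\varepsilon^2$ because the holes have size $\varepsilon$ and spacing $\varepsilon$, exactly as in the periodic case, cf.\ \cite[Lemma~1]{tartar127incompressible}), and then absorbing. The bound $\|p_\varepsilon\|_{L^2(\Omega_\varepsilon)/\mathbb{R}} \lesssim \|f\|_{L^2(\Omega_\varepsilon)}$ uses a uniform-in-$\varepsilon$ inf--sup (Ne\v{c}as) constant for $\Omega_\varepsilon$; this is where the uniform Bogovski\u{\i} estimate \textbf{(A4)$_0$} enters, by patching together cell-wise right inverses of the divergence after rescaling $\varepsilon Q_k \setminus \varepsilon\overline{\mathcal{O}_k}$ to $Q_k \setminus \overline{\mathcal{O}_k}$.

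Next comes the interior $H^2$/$H^1$ regularity. Fix $\Omega'' \subset\subset \Omega$ and pick $\Omega''' $ with $\Omega'' \subset\subset \Omega''' \subset\subset \Omega$. For $\varepsilon$ small enough (depending on $\mathrm{dist}(\Omega'',\partial\Omega''')$), every cell $\varepsilon Q_k$ meeting $\Omega''$ satisfies $\varepsilon Q''_k \subset \Omega'''$ and $k \in Y_\varepsilon$, so $(u_\varepsilon,p_\varepsilon)$ restricted to $\varepsilon Q''_k \setminus \varepsilon\overline{\mathcal{O}_k}$ solves a Stokes system of the form \eqref{A5} with no exterior boundary condition other than on $\partial(\varepsilon\mathcal{O}_k)$. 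Rescaling by $x \mapsto x/\varepsilon$ sends this to the unit-scale problem on $Q''_k \setminus \overline{\mathcal{O}_k}$ to which \textbf{(A5)} applies, with the scaling relations $\|D^2 v\|_{L^2} = \varepsilon^{1/2}\|D^2 u_\varepsilon\|_{L^2(\varepsilon\cdot)}$, $\|\nabla v\|_{L^2} = \varepsilon^{-1/2}\|\nabla u_\varepsilon\|$, $\|v\|_{L^2} = \varepsilon^{-3/2}\|u_\varepsilon\|$, and similarly $\|\nabla q\|_{L^2} = \varepsilon^{1/2}\|\nabla p_\varepsilon\|$, $\|q\|_{L^2} = \varepsilon^{-1/2}\|p_\varepsilon\|$, while the right-hand side scales as $\|\varepsilon^2 f(\varepsilon\cdot)\|_{L^2} = \varepsilon^{1/2}\|f\|$. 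Plugging these into \eqref{eq:introreg}, multiplying through by $\varepsilon^{-1/2}$, and squaring gives a cell-wise bound
$$
\|D^2 u_\varepsilon\|_{L^2(\varepsilon Q_k\setminus\varepsilon\overline{\mathcal{O}_k})}^2 + \|\nabla p_\varepsilon\|_{L^2(\varepsilon Q_k\setminus\varepsilon\overline{\mathcal{O}_k})}^2 \lesssim \|f\|_{L^2(\varepsilon Q''_k\setminus\varepsilon\overline{\mathcal{O}_k})}^2 + \varepsilon^{-2}\|\nabla u_\varepsilon\|_{L^2(\varepsilon Q''_k)}^2 + \varepsilon^{-4}\|u_\varepsilon\|_{L^2(\varepsilon Q''_k)}^2 + \varepsilon^{-2}\|p_\varepsilon\|_{L^2(\varepsilon Q''_k)}^2,
$$
with constant uniform in $k$ and $\varepsilon$ by \textbf{(A5)}. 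Summing over the (finitely overlapping) cells $k$ with $\varepsilon Q_k \cap \Omega'' \neq \emptyset$, the overlap of the $Q''_k$ being bounded by a dimensional constant, and then invoking the already-established global bounds on $\varepsilon^{-1}\|\nabla u_\varepsilon\|$, $\varepsilon^{-2}\|u_\varepsilon\|$ and $\|p_\varepsilon\|$ to control the three lower-order terms by $\|f\|_{L^2(\Omega_\varepsilon)}$, yields the claimed estimate for $\|D^2 u_\varepsilon\|_{L^2(\Omega''\cap\Omega_\varepsilon)}$ and $\|\nabla p_\varepsilon\|_{L^2(\Omega''\cap\Omega_\varepsilon)}$; that $(u_\varepsilon,p_\varepsilon)$ indeed lies in $[H^2]^3 \times H^1$ on $\Omega_\varepsilon$ (not merely on $\Omega''$) follows because near $\partial\Omega$ one may use interior elliptic regularity for Stokes together with the fact that $\partial\Omega$ is smooth and, for $\varepsilon$ small, stays away from the rescaled holes.

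The main obstacle is the bookkeeping of scalings combined with the finite-overlap covering argument, and in particular making sure the lower-order terms on the right of the rescaled \eqref{eq:introreg} are genuinely lower order once summed: the factor $\varepsilon^{-4}\|u_\varepsilon\|_{L^2}^2$ looks dangerous, but it is exactly compensated by the global bound $\|u_\varepsilon\|_{L^2(\Omega_\varepsilon)} \lesssim \varepsilon^2\|f\|$, and likewise $\varepsilon^{-2}\|\nabla u_\varepsilon\|^2$ by $\|\nabla u_\varepsilon\| \lesssim \varepsilon\|f\|$. A secondary subtlety is the $\varepsilon$-uniform inf--sup constant for $\Omega_\varepsilon$ needed for the pressure bound; this is the one place where \textbf{(A4)$_0$} rather than \textbf{(A5)} is used, and it must be set up by the Bogovski\u{\i}-patching on cells, handling the boundary layer $\Omega_\varepsilon \setminus \bigcup_{k\in Y_\varepsilon}\varepsilon Q_k$ near $\partial\Omega$ separately (there are no holes there, so the classical inf--sup for the Lipschitz domain $\Omega$ applies after a partition of unity).
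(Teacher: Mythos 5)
Your overall strategy is the same as the paper's: localize on cells $\varepsilon Q''_k$, rescale to unit cells, invoke Assumption \textbf{(A5)}, and sum using the global energy bounds. The global velocity and pressure estimates and the uniqueness argument are fine, as is the boundary-layer discussion near $\partial\Omega$. However, there is a genuine gap in the closing step.

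After rescaling and squaring, your cell-wise bound reads
$$
\|D^2 u_\varepsilon\|_{L^2(\varepsilon Q_k\setminus\varepsilon\overline{\mathcal{O}_k})}^2 + \|\nabla p_\varepsilon\|_{L^2(\varepsilon Q_k\setminus\varepsilon\overline{\mathcal{O}_k})}^2 \lesssim \|f\|_{L^2(\varepsilon Q''_k)}^2 + \varepsilon^{-2}\|\nabla u_\varepsilon\|_{L^2(\varepsilon Q''_k)}^2 + \varepsilon^{-4}\|u_\varepsilon\|_{L^2(\varepsilon Q''_k)}^2 + \varepsilon^{-2}\|p_\varepsilon\|_{L^2(\varepsilon Q''_k)}^2.
$$
The first three terms on the right are indeed compensated after summation by the global bounds $\|\nabla u_\varepsilon\| \lesssim \varepsilon\|f\|$ and $\|u_\varepsilon\| \lesssim \varepsilon^2\|f\|$. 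But the last term sums to $\varepsilon^{-2}\|p_\varepsilon\|_{L^2(\Omega_\varepsilon)}^2$, and the global pressure estimate only gives $\|p_\varepsilon\|_{L^2(\Omega_\varepsilon)/\mathbb{R}} \lesssim \|f\|$ with \emph{no} gain in $\varepsilon$. You claim this term is controlled ``by the already-established global bound on $\|p_\varepsilon\|$,'' but that bound yields $\varepsilon^{-2}\|f\|^2$, which diverges. Note also that changing the cell-wise normalization of the pressure (subtracting the mean $\lambda_k$ over $Q''_k\setminus\overline{\mathcal{O}_k}$, which is what \textbf{(A5)} implicitly requires to give a useful bound) does not help here, since $\sum_k\|p_\varepsilon-\lambda_k\|_{L^2(\varepsilon Q''_k)}^2 \leq C\|p_\varepsilon\|_{L^2(\Omega_\varepsilon)/\mathbb{R}}^2$ still carries no extra power of $\varepsilon$.

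The paper resolves this by \emph{eliminating} the local pressure from the right-hand side of the rescaled \textbf{(A5)} estimate before summing, and this is precisely where Assumption \textbf{(A4)$_0$} is used at the cell level, not only for the global inf--sup. Concretely: normalize $P^k_\varepsilon$ to have zero mean on $Q''_k\setminus\overline{\mathcal{O}_k}$; use \textbf{(A4)$_0$} together with \cite[Lemma~III.3.2]{galdi2011introduction} to produce $v\in[H^1_0(Q''_k\setminus\overline{\mathcal{O}_k})]^3$ with $\operatorname{div} v = P^k_\varepsilon$ and $\|v\|_{H^1}\leq C\|P^k_\varepsilon\|_{L^2}$ uniformly in $k$, which gives $\|P^k_\varepsilon\|_{L^2}\leq C\|\nabla P^k_\varepsilon\|_{H^{-1}}$; and then use the rescaled Stokes equation $\nabla P^k_\varepsilon = \Delta U^k_\varepsilon + F^k_\varepsilon$ to bound $\|\nabla P^k_\varepsilon\|_{H^{-1}}\leq \|\nabla U^k_\varepsilon\|_{L^2}+\|F^k_\varepsilon\|_{L^2}$. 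Substituting back into \eqref{eq:introreg} removes the pressure term and yields a right-hand side containing only $\|\nabla u_\varepsilon\|$, $\|u_\varepsilon\|$, and $\|f\|$, which do close after summation. Without this step your argument cannot close.
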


\begin{theoreme}[Convergence Theorem] Suppose that assumptions \textbf{(A1)-(A5)} are satisfied.
Let $f \in \left[ W^{3,\infty}(\Omega) \right]^3$ be such that $\mathrm{div}(Af) = 0$ and $f$ is compactly supported in $\Omega$. There exists a constant $C > 0$ such that for all $\varepsilon > 0$ small enough and all domain $\Omega'' \subset \subset \Omega$,
\begin{equation}
\begin{aligned}
\left\|D^2 \left[ u_{\varepsilon} - \varepsilon^2 w_j \left( \frac{\cdot}{\varepsilon} \right) f_j \right] \right\|_{\left[ L^2(\Omega'' \cap \Omega_{\varepsilon}) \right]^3}
+ & \varepsilon^{-1} \left\|\nabla \left[ u_{\varepsilon} - \varepsilon^2 w_j \left( \frac{\cdot}{\varepsilon} \right) f_j \right] \right\|_{\left[ L^2(\Omega_{\varepsilon}) \right]^3} \\
& + \varepsilon^{-2} \left\| u_{\varepsilon} - \varepsilon^2 w_j \left( \frac{\cdot}{\varepsilon} \right) f_j \right\|_{\left[L^2(\Omega_{\varepsilon}) \right]^3} \leq C \varepsilon
\end{aligned}
\end{equation}
and
\begin{equation}
\left\| \nabla \left[ p_{\varepsilon} - \varepsilon \left\{ p_j  \left( \frac{\cdot}{\varepsilon} \right) - \lambda_{\varepsilon}^j \right\} f_j \right] \right\|_{L^2(\Omega'' \cap \Omega_{\varepsilon})} + \left\| p_{\varepsilon} - \varepsilon \left\{ p_j  \left( \frac{\cdot}{\varepsilon} \right) - \lambda_{\varepsilon}^j \right\} f_j \right\|_{L^2(\Omega_{\varepsilon})/\mathbb{R}} \leq C \varepsilon,
\end{equation}
where $$\lambda_{\varepsilon}^j = \frac{1}{|\Omega_{\varepsilon}|}\int_{\Omega_{\varepsilon}} p_j\left(\frac{\cdot}{\varepsilon} \right).$$
\label{th:convergence}
\end{theoreme}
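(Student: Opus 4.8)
The plan is to reduce Theorem~\ref{th:convergence} to Theorem~\ref{th:masmoudi} by showing that the remainder $(R_\varepsilon,\pi_\varepsilon)$ solves, up to small error terms, a Stokes system of the form \eqref{eq:masmoudi1} with a right-hand side that is $O(\varepsilon)$ in $\left[L^2(\Omega_\varepsilon)\right]^3$. First I would plug the ansatz $u_\varepsilon = \varepsilon^2 w_j(\cdot/\varepsilon) f_j + R_\varepsilon$ and $p_\varepsilon = p_0 + \varepsilon\{p_j(\cdot/\varepsilon)-\lambda_\varepsilon^j\} f_j + \pi_\varepsilon$ into \eqref{eq:stokesnonper}. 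Using the corrector equations \eqref{eq:cor} (which hold in $\mathbb{R}^3\setminus\overline{\mathcal O}$, hence after rescaling in $\Omega_\varepsilon$ away from $\partial\Omega$), the leading terms cancel: the contribution $-\varepsilon^2\Delta[w_j(\cdot/\varepsilon)f_j] = -f_j(-\Delta w_j)(\cdot/\varepsilon) + \text{(cross terms)}$ combines with $\varepsilon\nabla[p_j(\cdot/\varepsilon)f_j] = f_j(\nabla p_j)(\cdot/\varepsilon) + \varepsilon p_j(\cdot/\varepsilon)\nabla f_j$, and since $-\Delta w_j + \nabla p_j = e_j$, the term $\sum_j f_j e_j = f$ on the right is matched. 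The divergence-free condition is where the hypothesis $\mathrm{div}(Af)=0$ enters: $\mathrm{div}(\varepsilon^2 w_j(\cdot/\varepsilon) f_j) = \varepsilon^2 w_j(\cdot/\varepsilon)\cdot\nabla f_j$ (using $\mathrm{div}\,w_j = 0$), and one needs to correct this by introducing a second-order corrector or, more simply, exploit that the average of $w_j$ over a cell is $A\,e_j$-related, so that $\mathrm{div}$ of the leading term is, to leading order, $\varepsilon^2\,\mathrm{div}(Af)\cdot(\text{stuff}) = 0$ plus a genuinely small fluctuation. I would make $R_\varepsilon$ exactly divergence-free by subtracting a corrector $v_\varepsilon$ to the divergence equation built via Assumption \textbf{(A4)$_0$} (or \textbf{(A4)$_1$} for the $H^2$ bound), with $\|v_\varepsilon\|\lesssim \varepsilon^{?}$; the hypothesis $\mathrm{div}(Af)=0$ is exactly what upgrades the naive $O(\varepsilon)$ divergence defect to $o(\varepsilon)$ or $O(\varepsilon^2)$ so the final rate is $C\varepsilon$.

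Next I would collect all the error terms generated by the substitution. These are of three types: (i) cross terms from the Leibniz rule, e.g. $-2\varepsilon\sum_j (\nabla w_j)(\cdot/\varepsilon):\nabla f_j$ and $-\varepsilon^2\sum_j w_j(\cdot/\varepsilon)\Delta f_j$ and $\varepsilon\sum_j(p_j(\cdot/\varepsilon)-\lambda_\varepsilon^j)\partial_j$-type terms — these carry explicit powers of $\varepsilon$ and are controlled in $L^2(\Omega_\varepsilon)$ using $f\in W^{3,\infty}$ and the $L^2_{\mathrm{loc}}$ bounds on $w_j,\nabla w_j, p_j-\langle p_j\rangle$, the latter via the periodic bounds plus Theorem~\ref{th:cor} for the $\widetilde{w_j},\widetilde{p_j}$ parts; crucially the pressure-fluctuation estimate $\|\widetilde{p_j}-\langle\widetilde{p_j}\rangle\|_{L^2(\frac1\varepsilon\Omega_\varepsilon)}\le C\varepsilon^{-1}$ from Theorem~\ref{th:cor} gives, after rescaling, an $O(\varepsilon^{1/2})$ or better contribution; (ii) the boundary-layer error near $\partial\Omega$: since $w_j(\cdot/\varepsilon)f_j$ does not vanish on $\partial\Omega$, strictly speaking one needs $f$ compactly supported in $\Omega$ — and this is precisely the hypothesis! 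With $\mathrm{supp}\,f\subset\subset\Omega$, for $\varepsilon$ small the function $w_j(\cdot/\varepsilon)f_j$ is supported away from $\partial\Omega$ and automatically in $H^1_0(\Omega_\varepsilon)$, killing the boundary layer entirely; (iii) the terms involving $\nabla p_0$: here $\mathrm{div}(Af)=0$ combined with the Darcy law \eqref{eq:darcy} forces $p_0$ to be constant (since $u^* = A(f-\nabla p_0)$, $\mathrm{div}\,u^*=0$, $u^*\cdot n=0$, and $\mathrm{div}(Af)=0$ gives $\mathrm{div}(A\nabla p_0)=0$ with Neumann data, hence $\nabla p_0\equiv 0$) — this is why the statement uses $w_j(\cdot/\varepsilon) f_j$ rather than $w_j(\cdot/\varepsilon)(f_j-\partial_j p_0)$, a genuine simplification that I would establish at the very start.

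Then I would invoke Theorem~\ref{th:masmoudi} applied to $(R_\varepsilon + \text{divergence-correction}, \pi_\varepsilon)$, whose right-hand side $g_\varepsilon$ has been shown to satisfy $\|g_\varepsilon\|_{\left[L^2(\Omega_\varepsilon)\right]^3}\le C\varepsilon$. Theorem~\ref{th:masmoudi} then yields directly
\[
\|D^2 R_\varepsilon\|_{L^2(\Omega''\cap\Omega_\varepsilon)} + \varepsilon^{-1}\|\nabla R_\varepsilon\|_{L^2(\Omega_\varepsilon)} + \varepsilon^{-2}\|R_\varepsilon\|_{L^2(\Omega_\varepsilon)} + \|\nabla\pi_\varepsilon\|_{L^2(\Omega''\cap\Omega_\varepsilon)} + \|\pi_\varepsilon\|_{L^2(\Omega_\varepsilon)/\mathbb{R}} \le C\varepsilon,
\]
after absorbing the (small, explicitly-$\varepsilon$-powered) divergence-correction term into the right-hand side. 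Finally I would translate back: the difference between $R_\varepsilon$ and $u_\varepsilon - \varepsilon^2 w_j(\cdot/\varepsilon)f_j$ is zero by definition once $\nabla p_0 = 0$, and similarly for the pressure with the stated $\lambda_\varepsilon^j$, completing the proof. The main obstacle I anticipate is step (i)–(iii) bookkeeping done \emph{uniformly in $\varepsilon$}: one must verify that every error term genuinely carries a surplus power of $\varepsilon$ after the $L^2$-rescaling $\|g(\cdot/\varepsilon)\|_{L^2(\Omega_\varepsilon)} = \varepsilon^{3/2}\|g\|_{L^2(\frac1\varepsilon\Omega_\varepsilon)}$ combined with the at-most-$\varepsilon^{-1}$ growth of $\|p_j-\lambda\|$ and at-most-$\varepsilon^{-1/2}$-type growth of $\|w_j\|$, $\|\nabla w_j\|$ on $\frac1\varepsilon\Omega_\varepsilon$ — the borderline term being the pressure cross-term $\varepsilon(p_j(\cdot/\varepsilon)-\lambda_\varepsilon^j)\nabla f_j$, which is exactly $O(\varepsilon)$ and hence dictates the final rate; getting the divergence defect down to the same $O(\varepsilon)$ level (using $\mathrm{div}(Af)=0$ and a cell-by-cell correction via \textbf{(A4)}) rather than a worse rate is the other delicate point.
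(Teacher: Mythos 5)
Your high-level strategy coincides with the paper's: substitute the ansatz into the Stokes system, use $\mathrm{div}(Af)=0$ together with Darcy's law to conclude $\nabla p_0\equiv 0$ (so that the corrector is applied to $f_j$ alone), use the compact support of $f$ to avoid the boundary layer, and finally invoke Theorem~\ref{th:masmoudi} on the remainder with an $O(\varepsilon)$ right-hand side. The bookkeeping of the Leibniz cross-terms, the rescaling $\|g(\cdot/\varepsilon)\|_{L^2(\Omega_\varepsilon)} = \varepsilon^{3/2}\|g\|_{L^2(\Omega_\varepsilon/\varepsilon)}$, and the role of the pressure-fluctuation bound from Theorem~\ref{th:cor} all match the paper.

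There is, however, a concrete gap in your treatment of the divergence defect, and as stated your argument would not deliver the claimed rate. The defect is $\mathrm{div}\bigl(\varepsilon^2 w_j(\tfrac{\cdot}{\varepsilon})f_j\bigr) = \varepsilon^2\,w_j(\tfrac{\cdot}{\varepsilon})\cdot\nabla f_j$. If you immediately build a divergence corrector $S_\varepsilon$ for this (via Lemma~\ref{lem:h2}, whose operator costs a factor $\varepsilon^{-1}$ from $H^1_0(\Omega_\varepsilon)$ to $H^2_0(\Omega_\varepsilon)$), you find $\|\varepsilon^2 w_j(\tfrac{\cdot}{\varepsilon})\cdot\nabla f_j\|_{H^1(\Omega_\varepsilon)} = O(\varepsilon)$ only (the derivative of $w_j(\cdot/\varepsilon)$ eats one power of $\varepsilon$), so $\|S_\varepsilon\|_{H^2(\Omega_\varepsilon)} = O(1)$ and $\Delta S_\varepsilon$ is not small. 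The hypothesis $\mathrm{div}(Af)=0$ never gets a chance to act. The paper's remedy is the second-order corrector of Lemma~\ref{lem:cor2h2pasper}: one enriches the ansatz to $u_\varepsilon^1 = \varepsilon^2 w_j(\tfrac{\cdot}{\varepsilon})f_j + \varepsilon^3 z_j^i(\tfrac{\cdot}{\varepsilon})\partial_i f_j$ where $z_j^i$ solves $-\mathrm{div}(z_j^i) = w_j^i - \chi A_j^i$ in $\mathbb{R}^3\setminus\overline{\mathcal{O}}$ with $z_j^i = 0$ on $\partial\mathcal{O}$ and a uniform $H^2$ bound. Then $\mathrm{div}(u_\varepsilon^1) = \varepsilon^2\chi(\tfrac{\cdot}{\varepsilon})A_j^i\partial_i f_j + \varepsilon^3 z_j^i(\tfrac{\cdot}{\varepsilon})\cdot\nabla\partial_i f_j$; the first term is exactly $\varepsilon^2\chi(\cdot/\varepsilon)\,\mathrm{div}(Af)$ and vanishes by hypothesis, while the second term carries an extra power of $\varepsilon$, so that the resulting $S_\varepsilon$ satisfies $\|S_\varepsilon\|_{H^2}\le C\varepsilon$. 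You do gesture at this mechanism ("the average of $w_j$ over a cell is $A\,e_j$-related", "introducing a second-order corrector"), but you present it as an optional alternative to a naive correction that in fact fails, and your "$\|v_\varepsilon\|\lesssim\varepsilon^?$" with an unresolved exponent is precisely the unproved step. Constructing $z_j^i$ (via the periodic/non-periodic splitting $\nabla\Psi_j^i + g_j^i$, the cut-off and the cell-by-cell use of \textbf{(A4)$_1$}) and proving its uniform $H^2$ estimate is where most of the technical work of the proof lies and cannot be elided.
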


\begin{remarque}
We note that Theorem \ref{th:masmoudi} and Theorem \ref{th:convergence} are valid in the periodic case (that is in the framework of subsection \ref{section11}). This provides a new situation in which quantitative error estimates can be obtained, besides th ones of \cite{maruvsic1997asymptotic,shen}.
\end{remarque}

\begin{remarque}
%Question des conditions aux bords justification de l'hypothèse $\mathrm{div}(Af) = 0$, donnée aux bords périodique
\label{re:bord}
The assumptions $\mathrm{div}(Af) = 0$ and $f$ compactly supported in $\Omega$ make boundary effects disappear. Indeed, it is straightforward to see that in this case $\nabla p_0 = 0$ in $\Omega$ (see \eqref{eq:darcy}). Since $f$ is compactly supported, we have $\varepsilon^2 w_j(\cdot/\varepsilon)f_j = 0$ on $\partial \Omega$, so $u_{\varepsilon}$ and its first order expansion coincide on $\partial \Omega$. This explains why the $O(\varepsilon^2)$ $H^1-$convergence rate of $R_{\varepsilon}$ obtained in Theorem \ref{th:convergence} is sharper than the $O(\varepsilon^{3/2})$ $H^1-$convergence rate obtained in \cite[Theorem 1.1]{shen}.  %Another possibility to avoid boundary effects would be to consider macroscopic periodic domains, as in Remark \ref{re:per}. 
\end{remarque}

\begin{remarque}
\label{re:utilite}
By applying Theorem \ref{th:convergence}, we get that $R_{\varepsilon} \in H^2(\Omega_{\varepsilon})$. We now note that, in general, one has $R_{\varepsilon}^{\mathrm{per}} \notin H^2(\Omega_{\varepsilon})$. This follows from the fact that  $w_j^{\mathrm{per}} \left(\frac{\cdot}{\varepsilon}\right) \notin H^2(\Omega_{\varepsilon})$ (unless of course $\Omega_{\varepsilon} = \Omega_{\varepsilon}^{\mathrm{per}}$) for $j = 1,2,3$. This is due to the normal derivative jumps of $w_j^{\mathrm{per}}(\cdot/\varepsilon)$ along the parts of $\varepsilon \partial \mathcal{O}^{\mathrm{per}}$ that are included in $\Omega_{\varepsilon}$. This shows that, in the non-periodic case, using the periodic corrector in \eqref{eq:deuxechellesnonper2} does not give the expected convergence rate, contrary to the non-periodic corrector. 
%Justification de l'utilité du correcteur non-périodique
\end{remarque}

\begin{remarque}
Theorem~\ref{th:masmoudi} and Theorem~\ref{th:convergence} can be proved up to the boundary of $\Omega$ with the same convergence rates when $\Omega$ is of class $\mathcal{C}^2$. The proof is rather technical and will be omitted here. 
\end{remarque}

\begin{remarque} Theorem~\ref{th:masmoudi} can be proved for the $H^m-$norm, $m > 0$ in the periodic domain $\Omega_{\varepsilon}^{\mathrm{per}}$ (see \cite[Theorem 4.2]{masmoudi2004some}) and in the non-periodic domain $\Omega_{\varepsilon}$, provided that we require higher regularity of $\mathcal{O}_k$ in \textbf{(A5)'} (typically that $\mathcal{O}_k$ is uniformly with respect to $k$ of class $\mathcal{C}^{m+2}$, see \cite[Theorem IV.5.1]{galdi2011introduction}): if $f \in \left[ H^m (\Omega_{\varepsilon})\right]^3$, then $(u_{\varepsilon},p_{\varepsilon}) \in \left[ H^{m+2}(\Omega_{\varepsilon}) \right]^3 \times H^{m+1}(\Omega_{\varepsilon})/\mathbb{R}$ and
there exists a constant $C$ independent of $\varepsilon$ such that
$$\| D^{m+2} u_{\varepsilon} \|_{\left[ L^2(\Omega'' \cap \Omega_{\varepsilon})\right]^3} + \| D^{m+1} p_{\varepsilon} \|_{L^2(\Omega'' \cap \Omega_{\varepsilon})} \leq C \sum_{i=0}^m \frac{1}{ \varepsilon^{i}} \| D^{m-i} f \|_{\left[ L^2(\Omega_{\varepsilon}) \right]^3}.$$
\end{remarque}

\begin{remarque}
This paper presents only the three dimensional case. All that follows is true in dimension greater than 3. As for the two dimensional case, Theorem \ref{th:cor} and Theorem \ref{th:masmoudi} are valid.
%Autres dimensions ?
\end{remarque}

%Re: renvoyer à la question de la justification de l'utilité

%- \textbf{Notre problème} : problème de Stokes dans un ouvert perforé périodiquement avec des défauts. \\

%- \textbf{Littérature} : 1) ce qui existe déjà en périodique\\
%2) Poisson. \\

%- \textbf{Résultats} :\\
%1) Estimations $H^2$ de Masmoudi\\
%2) Existence de $w$. \\
%2') Ce qui est connu en périodique dans le cas $f$ général : %$R_{\varepsilon} \rightarrow 0$. En remarque : on l'a aussi en non-périodique mais le $\widetilde{w}$ ne sert à rien \\
%3) Existence de $z$ \\
%4) Cas où on voit $\widetilde{w}$. 

 The rest of the paper is devoted to proofs. In Section~\ref{sect:proofmasmoudi}, we give the proof of Theorem~\ref{th:masmoudi} in both periodic and non periodic perforated domains. We next prove in Section~\ref{sect:proofcorr} the existence of the non-periodic correctors. Finally, Section~\ref{sect:proofcvth} is devoted to the proof of the convergence Theorem~\ref{th:convergence}. Some technical Lemmas, especially concerning divergence problems, are postponed to Appendix \ref{sect:appendix}.

\section{Proofs}
\label{sectproof}

\subsection{Proof of Theorem \ref{th:masmoudi}}
\label{sect:proofmasmoudi}

We first state the following Poincar\'e-Friedrichs inequality:

\begin{lemme}  Suppose that Assumptions \textbf{(A1)} and \textbf{(A3)} are satisfied.
There exists a constant $C > 0$ independent of $\varepsilon$ such that for all $u \in \left[ H^1_0(\Omega_{\varepsilon}) \right]^3$, one has 
$$\int_{\Omega_{\varepsilon}} |u|^2 \leq C \varepsilon^2 \int_{\Omega_{\varepsilon}} | \nabla u |^2.$$
\label{lem:poinc}
\end{lemme}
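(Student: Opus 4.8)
The plan is to reduce the Poincaré--Friedrichs inequality on $\Omega_\varepsilon$ to the analogous inequality on a single rescaled cell, and then exploit the uniform geometry provided by Assumptions \textbf{(A1)} and \textbf{(A3)}. Recall that by \textbf{(A3)} (and the consequence quoted just after it) there is a $\delta>0$, independent of $k$, such that each perforation $\mathcal{O}_k$ contains a fixed ball of radius comparable to $\delta$ sitting at distance $\geq\delta$ from $\partial Q_k$; in particular $\mathcal{O}_k$ has measure bounded below uniformly in $k$. The key cell estimate is: there exists $C_0>0$, independent of $k$, such that for every $v\in H^1(Q_k\setminus\overline{\mathcal O_k})$ vanishing on $\partial\mathcal O_k$ one has $\int_{Q_k\setminus\overline{\mathcal O_k}}|v|^2\leq C_0\int_{Q_k\setminus\overline{\mathcal O_k}}|\nabla v|^2$. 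The uniformity in $k$ is the heart of the matter: it follows because, after translating by $-k$, all the domains $Q\setminus\overline{\mathcal O_k}$ are subsets of the fixed bounded set $Q$, they all omit a ball of fixed radius about some point in a fixed compact region, and the Poincaré constant for such a class of domains is uniformly bounded — this can be seen by a standard compactness--contradiction argument (if the constants blew up along a sequence of cells, extract a weakly convergent sequence of normalized functions; the weak limit would be a nonzero constant vanishing on a set of positive measure, a contradiction), using only the lower bound on $|\mathcal O_k|$ and not the precise shape.

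Next I would pass from the cell to the full domain. Write $\Omega_\varepsilon\subset\bigcup_{k}\varepsilon Q_k$ and, for $u\in[H^1_0(\Omega_\varepsilon)]^3$ extended by zero outside $\Omega_\varepsilon$ (equivalently, extended by zero into the holes $\varepsilon\overline{\mathcal O_k}$ and outside $\Omega$), note that on each $\varepsilon Q_k$ the restriction of $u$ either vanishes on a piece of $\varepsilon\partial\mathcal O_k$ (when $k\in Y_\varepsilon$ so that $\varepsilon\mathcal O_k$ is a genuine hole) or vanishes on a portion of $\partial\Omega$ of comparable size — in both cases $u$ satisfies a homogeneous Dirichlet condition on a subset of $\partial(\varepsilon Q_k\cap\Omega_\varepsilon)$ of surface measure bounded below by $c\varepsilon^2$. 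Applying the rescaled cell inequality on each $\varepsilon Q_k$ (the scaling $x\mapsto x/\varepsilon$ turns $\int|v|^2\leq C_0\int|\nabla v|^2$ on the unit cell into $\int|v|^2\leq C_0\varepsilon^2\int|\nabla v|^2$ on $\varepsilon Q_k$) and summing over $k$ yields $\int_{\Omega_\varepsilon}|u|^2\leq C\varepsilon^2\int_{\Omega_\varepsilon}|\nabla u|^2$ with $C$ independent of $\varepsilon$. One mild subtlety: the cells $\varepsilon Q_k$ that straddle $\partial\Omega$ need separate attention, since they may not contain a full hole; there I would either invoke the classical Poincaré inequality for $H^1_0(\Omega)$ on the union of boundary cells (whose width is $O(\varepsilon)$, giving an $O(\varepsilon^2)$ constant via a one-dimensional Poincaré estimate in the direction transverse to $\partial\Omega$), or, more simply, enlarge the collection of holes used in the argument so every relevant cell contains one — this is exactly the kind of standard bookkeeping done in \cite{tartar127incompressible} and \cite[Lemma 1]{tartar127incompressible}.

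The main obstacle, and the only place where the non-periodic hypotheses genuinely enter, is establishing the \emph{uniformity in $k$} of the single-cell Poincaré constant $C_0$. In the periodic case this is trivial since there is only one cell up to translation; here the domains $Q\setminus\overline{\mathcal O_k}$ genuinely vary with $k$. The compactness--contradiction argument sketched above is the clean way to handle it, and it goes through precisely because \textbf{(A1)} gives connectedness of $Q_k\setminus\overline{\mathcal O_k}$ and \textbf{(A3)} gives the uniform interior ball (hence $|\mathcal O_k|\geq c>0$ and $d(\mathcal O_k,\partial Q_k)\geq\delta$), which is all the compactness argument needs — one does not even need the Lipschitz regularity \textbf{(A2)} for this inequality, only that $u$ vanishes on a set of positive measure that is uniformly bounded below. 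I would also remark that the statement, as in \cite{tartar127incompressible}, is stated for the extension by zero, so no extension operator is required; the trivial extension is in $H^1(\mathbb R^3)$ precisely because $u\in H^1_0(\Omega_\varepsilon)$.
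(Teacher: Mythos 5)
Your proposal uses exactly the same decomposition as the paper: split $\Omega_\varepsilon$ into interior cells $\varepsilon(Q_k\setminus\overline{\mathcal O_k})$, $k\in Y_\varepsilon$, and boundary cells $(\varepsilon Q_k)\cap\Omega$, $k\in Z_\varepsilon$; prove a rescaled Poincar\'e inequality with constant $C\varepsilon^2$ on each piece, uniformly in $k$; and sum. The paper obtains the interior-cell estimate by citing \cite[Lemma 3.2]{BW} and the boundary-cell estimate by citing \cite[Lemma~1]{tartar127incompressible}; you instead supply a self-contained compactness--contradiction argument for the uniformity of the interior-cell constant. That argument is sound, but to close it one must separate the finitely many indices $k$ where $\alpha_k$ is large: for those the cell Poincar\'e constant is finite by elementary reasons (each $\mathcal O_k$ is a fixed nonempty open set), and for the remaining $k$ the hole $\mathcal O_k$ contains a fixed translated ball $B_0\subset\mathcal O_0^{\mathrm{per},-}(\alpha_0)$ because $\alpha_k\to 0$, so a weak limit along any escaping subsequence must vanish on $B_0$. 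One genuine flaw in your sketch: the second alternative you suggest for the boundary cells --- ``enlarge the collection of holes used in the argument so every relevant cell contains one'' --- cannot work, because by construction $\Omega_\varepsilon$ has \emph{no} perforation in the cells meeting $\partial\Omega$; the Dirichlet condition there is inherited only from $\partial\Omega$, and the argument must exploit it, as your primary suggestion (one-dimensional Poincar\'e across the $O(\varepsilon)$-thick boundary strip) and \cite[Lemma~1]{tartar127incompressible} do. With those caveats, the proof is essentially the paper's, with the external lemma for the interior-cell constant replaced by a direct compactness argument.
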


\begin{proof}  
We recall that $Y_{\varepsilon}$ is defined by \eqref{eq:Y_eps} and we define $Z_{\varepsilon} := \{ k \in \mathbb{Z}^d, \varepsilon Q_k \cap \partial \Omega \neq \emptyset \}$. We have the decomposition 
\begin{equation}
\Omega_{\varepsilon} = \left( \bigcup_{k \in Y_{\varepsilon}} \varepsilon (\overline{Q_k} \setminus \overline{\mathcal{O}_k}) \right) \cup \left( \bigcup_{k \in Z_{\varepsilon}} \left[(\varepsilon \overline{Q_k}) \cap \Omega \right] \right).
\label{eq:poinc3}
\end{equation}
Thanks to Assumption \textbf{(A3)} and the proof of \cite[Lemma 3.2]{BW}, we know that there exists a constant $C > 0$ independent of $k$ and $\varepsilon$ such that for all $k \in Y_{\varepsilon}$, we have the inequality
\begin{equation}
\int_{\varepsilon (Q_k \setminus \overline{\mathcal{O}_k})} u^2 \leq C \varepsilon^2 \int_{\varepsilon (Q_k \setminus \overline{\mathcal{O}_k})} |\nabla u|^2.
\label{eq:poinc1}
\end{equation}
We now fix $k \in Z_{\varepsilon}$. Thanks to the proof of \cite[Lemma 1]{tartar127incompressible}, there exists a constant $C > 0$ which is independent of $k$ and $\varepsilon$ such that 
\begin{equation}
\int_{(\varepsilon Q_k) \cap \Omega} u^2 \leq C \varepsilon^2 \int_{(\varepsilon Q_k) \cap \Omega} |\nabla u|^2.
\label{eq:poinc2}
\end{equation}
Summing the estimate \eqref{eq:poinc1} over $k \in Y_{\varepsilon}$, the estimate \eqref{eq:poinc2} over $k \in Z_{\varepsilon}$ and using \eqref{eq:poinc3} concludes the proof of Lemma \ref{lem:poinc}.
\end{proof}

Let $(u_{\varepsilon},p_{\varepsilon})$ be the solution of \eqref{eq:stokesnonper}. We have by classical energy estimates the following inequalities:
\begin{equation}
\left(\int_{\Omega_{\varepsilon}} |\nabla u_{\varepsilon} |^2 \right)^{\frac{1}{2}} \leq C \varepsilon \| f \|_{\left[ L^2(\Omega_{\varepsilon})\right]^3} \ \ \ \ \mathrm{and} \ \ \ \ \left(\int_{\Omega_{\varepsilon}} | u_{\varepsilon} |^2 \right)^{\frac{1}{2}} \leq C \varepsilon^2 \| f \|_{\left[L^2(\Omega_{\varepsilon})\right]^3}
\label{eq:thmasmoudi00}
\end{equation}
which will be useful in the proof of Theorem \ref{th:masmoudi}.

\begin{proof}
[Proof of Theorem \ref{th:masmoudi}] In this proof, $C$ will denote various constants independent of $\varepsilon$ that can change from one line to another. We fix $\Omega'' \subset \subset \Omega$. We first show the following estimate:
\begin{equation}
\| D^2 u_{\varepsilon} \|_{\left[L^2(\Omega'' \cap \Omega_{\varepsilon})^3\right]^{3 \times 3}} + \| \nabla p_{\varepsilon} \|_{\left[ L^2(\Omega'' \cap \Omega_{\varepsilon}) \right]^3} \leq C \left[\varepsilon^{-1} \| \nabla u_{\varepsilon} \|_{\left[L^2(\Omega_{\varepsilon}) \right]^{3 \times 3}} + \varepsilon^{-2}\| u_{\varepsilon} \|_{\left[ L^2(\Omega_{\varepsilon}) \right]^3} + \| f \|_{\left( L^2(\Omega_{\varepsilon}) \right]^3} \right].
\label{eq:masmoudi0}
\end{equation}
\underline{Proof of \eqref{eq:masmoudi0}}:
we study Problem \eqref{eq:masmoudi1} on each periodic cell $Q_k \setminus \overline{\mathcal{O}_k}$. 
Let $k \in Y_{\varepsilon}$, where $Y_{\varepsilon}$ is defined in \eqref{eq:Y_eps}. We recall that $Q''_k$ is introduced in \eqref{eq:Q''} and we define in $Q''_k \setminus \overline{\mathcal{O}_k}$ the functions
$$
\begin{cases}
\begin{aligned}
U_{\varepsilon}^k & := \varepsilon^{-2} u_{\varepsilon}(\varepsilon \cdot) \\
P_{\varepsilon}^k & := \varepsilon^{-1} p_{\varepsilon}(\varepsilon \cdot) - \lambda_k \\
F_{\varepsilon}^k & := f(\varepsilon \cdot) 
\end{aligned}
\end{cases}
$$
where $\lambda_k \in \mathbb{R}$ is chosen such that 
$$\int_{Q''_k \setminus \overline{\mathcal{O}_k}} P_{\varepsilon}^k = 0.$$
Then $(U_{\varepsilon}^k,P_{\varepsilon}^k) \in \left[ H^1\left(Q''_k \setminus \overline{\mathcal{O}_k} \right) \right]^3 \times L^2 \left( Q''_k \setminus \overline{\mathcal{O}_k} \right)$ and $(U_{\varepsilon}^k,P_{\varepsilon}^k)$ is solution to the following Stokes system 
\begin{equation}
\begin{cases}
\begin{aligned}
- \Delta U_{\varepsilon}^k + \nabla P_{\varepsilon}^k & = F_{\varepsilon}^k \quad \mathrm{in} \quad Q''_k \setminus \overline{\mathcal{O}_k}  \\
\mathrm{div}(U_{\varepsilon}^k) & = 0 \\
U_{\varepsilon}^k & = 0 \quad \mathrm{on} \quad \partial \mathcal{O}_k.
\end{aligned}
\end{cases}
\label{eq:masmoudi6}
\end{equation}
%We define $\Omega_k := Q'' \backslash \left[\overline{\mathcal{O}_k} - k \right]$ and
%we apply Theorem IV.5.1 of \cite{galdi2011introduction} on $\Omega_k$ to \eqref{eq:masmoudi6}. 
%Un point me trouble dans ce Théorème. A priori, rien n'interdit de choisir $\Omega' $ tel que $\partial \Omega' \cap \partial \Omega_0 \supset \sigma$, un bout de la frontière n'apparaît donc pas dans le membre de droite...
By applying Assumption \textbf{(A5)} to System \eqref{eq:masmoudi6}, we get the estimate
\begin{equation}
\left\| U_{\varepsilon}^k \right\|_{\left[ H^2(Q_k \setminus  \overline{\mathcal{O}_k } )\right]^3} + \| P_{\varepsilon}^k \|_{H^1 (Q_k \setminus  \overline{\mathcal{O}_k } )} \leq C \left[ \left\| U_{\varepsilon}^k \right\|_{\left[ H^1 (Q''_k \setminus  \overline{\mathcal{O}_k } ) \right]^3} + \left\| P_{\varepsilon}^k \right\|_{ L^2 (Q''_k \setminus  \overline{\mathcal{O}_k } ) } + \left\| F_{\varepsilon}^k \right\|_{\left[ L^2 (Q''_k \setminus  \overline{\mathcal{O}_k } ) \right]^3 }\right].
\label{eq:thmasmoudi1}
\end{equation}
Assumption \textbf{(A4)$_0$} and \cite[Lemma III.3.2]{galdi2011introduction} applied with $\Omega_1 := Q_k \setminus \overline{\mathcal{O}_k}$ and $\Omega_2 := Q''_k \setminus Q_k$ give a function $v \in \left[H^1_0(Q''_k \setminus \mathcal{O}_k) \right]^3$ such that $\mathrm{div}(v) = P_{\varepsilon}^k$ and 
\begin{equation}\|v\|_{\left[H^1(Q''_k \setminus \mathcal{O}_k) \right]^3} \leq C \| P_{\varepsilon}^k \|_{L^2(Q''_k \setminus \mathcal{O}_k)},
\label{eq:thmasmoudi21}
\end{equation}
where $C$ is independent of $k$. Thus,
\begin{equation}
\|P_{\varepsilon}^k \|_{L^2(Q''_k \setminus \overline{\mathcal{O}_k})}^2 = \langle \nabla P_{\varepsilon}^k, v \rangle_{H^{-1}\times H^1_0(Q''_k \setminus \overline{\mathcal{O}_k})} \leq \| \nabla P_{\varepsilon}^k \|_{\left[ H^{-1}(Q''_k \setminus \overline{\mathcal{O}_k}) \right]^3} \| v \|_{\left[ H^1(Q''_k \setminus \overline{\mathcal{O}_k}) \right]^3}.
\label{eq:thmasmoudi22}
\end{equation}
Gathering together \eqref{eq:thmasmoudi21} and \eqref{eq:thmasmoudi22} yields
\begin{equation}
\|P_{\varepsilon}^k \|_{L^2(Q''_k \setminus \overline{\mathcal{O}_k})} \leq C \| \nabla P_{\varepsilon}^k \|_{\left[ H^{-1}(Q''_k \setminus \overline{\mathcal{O}_k}) \right]^3}.
\label{eq:thmasmoudi2}
\end{equation}
The triangle inequality applied to the first equation of \eqref{eq:masmoudi6} then provides the inequality
\begin{equation}
\begin{aligned}
\| \nabla P_{\varepsilon}^k \|_{\left[ H^{-1}(Q''_k \setminus \overline{\mathcal{O}_k}) \right]^3} & \leq \| \Delta U_{\varepsilon}^k \|_{\left[ H^{-1}(Q''_k \setminus \overline{\mathcal{O}_k})\right]^3} + \| F_{\varepsilon}^k \|_{\left[ H^{-1}(Q''_k \setminus \overline{\mathcal{O}_k}) \right]^3} \\
& \leq \| \nabla U_{\varepsilon}^k \|_{\left[ L^2(Q''_k \setminus \overline{\mathcal{O}_k})\right]^{3 \times 3}} + \| F_{\varepsilon}^k \|_{\left[L^2(Q''_k \setminus \overline{\mathcal{O}_k})\right]^3}.
\end{aligned}
\label{eq:thmasmoudi}
\end{equation}
Collecting \eqref{eq:thmasmoudi1}, \eqref{eq:thmasmoudi2} and \eqref{eq:thmasmoudi}, we get 
$$
\left\| U_{\varepsilon}^k \right\|_{\left[ H^2(Q_k \setminus \overline{\mathcal{O}_k} )\right]^3 } + \| P_{\varepsilon}^k \|_{H^1(Q_k \setminus \overline{\mathcal{O}_k} )} \leq C \left[ \left\| U_{\varepsilon}^k \right\|_{\left[H^1(Q''_k \setminus \overline{\mathcal{O}_k} )\right]^3} + \left\| F_{\varepsilon}^k \right\|_{\left[ L^2(Q''_k \setminus \overline{\mathcal{O}_k} ) \right]^3}\right].$$
In particular, we deduce 
\begin{equation}
\left\|D^2 U_{\varepsilon}^k \right\|_{\left[L^2(Q_k \setminus \overline{\mathcal{O}_k} )^3\right]^{3\times 3}} + \| \nabla P_{\varepsilon}^k \|_{\left[L^2(Q_k \setminus \overline{\mathcal{O}_k} ) \right]^3} \leq C \left[ \left\| U_{\varepsilon}^k \right\|_{\left[ H^1(Q''_k \setminus \overline{\mathcal{O}_k} ) \right]^3} + \left\| F_{\varepsilon}^k \right\|_{\left[L^2(Q''_k \setminus \overline{\mathcal{O}_k} )\right]^3}\right].
\label{eq:masmoudi7}
\end{equation}

\noindent Scaling back \eqref{eq:masmoudi7} gives
\begin{equation}
\begin{aligned}
\| D^2 u_{\varepsilon} \|_{\left[ L^2(\varepsilon Q_k \backslash \overline{\mathcal{O}_k})^3 \right]^{3 \times 3}} + \| \nabla p_{\varepsilon} \|_{\left[ L^2(\varepsilon Q_k \backslash \overline{\mathcal{O}_k}) \right]^3} & \leq C \big[\varepsilon^{-1} \| \nabla u_{\varepsilon} \|_{\left[ L^2(\varepsilon Q''_k \backslash \overline{\mathcal{O}_k}) \right]^{3 \times 3}}  \\ & + \varepsilon^{-2}\| u_{\varepsilon} \|_{\left[ L^2(\varepsilon Q''_k \backslash \overline{\mathcal{O}_k})\right]^3} + \| f \|_{\left[ L^2(\varepsilon Q''_k \backslash \overline{\mathcal{O}_k}) \right]^3} \big].
\label{eq:masmoudi2}
\end{aligned}
\end{equation}
Thus,
\begin{equation}
\begin{aligned}
\| D^2 u_{\varepsilon} \|_{\left[L^2(\varepsilon Q_k \backslash \overline{\mathcal{O}_k})^3\right]^{3 \times 3}}^2  +  \| \nabla p_{\varepsilon} \|_{\left[ L^2(\varepsilon Q_k \backslash \overline{\mathcal{O}_k})\right]^3}^2 
& \leq C \big[\varepsilon^{-2} \| \nabla u_{\varepsilon} \|_{\left[ L^2(\varepsilon Q''_k \backslash \overline{\mathcal{O}_k}) \right]^{3 \times 3}}^2 \\ & + \varepsilon^{-4}\| u_{\varepsilon} \|_{\left[ L^2(\varepsilon Q''_k \backslash \overline{\mathcal{O}_k})\right]^{3}}^2 + \| f \|_{\left[ L^2(\varepsilon Q''_k \backslash \overline{\mathcal{O}_k})\right]^3}^2 \big].
\label{eq:masmoudi5}
\end{aligned}
\end{equation}
We next sum \eqref{eq:masmoudi5} over $k \in \widetilde{Y_{\varepsilon}}$ where $$\widetilde{Y_{\varepsilon}} :=  Y_{\varepsilon} \setminus \left\{ k \in \mathbb{Z}^3, \ \mathrm{d}(\varepsilon Q_k,\Omega^c) > \varepsilon \right\}.$$ 
We note that for $\varepsilon < \varepsilon_0(\Omega'')$, we have the inclusion
$$\Omega'' \cap \Omega_{\varepsilon} \subset \bigcup_{k \in \widetilde{Y_{\varepsilon}}} \varepsilon Q''_k \backslash \overline{\mathcal{O}_k} \subset \Omega_{\varepsilon}.$$ 
We get
\begin{equation}
\begin{aligned}
\| D^2 u_{\varepsilon} \|_{\left[ L^2(\Omega'' \cap \Omega_{\varepsilon})^3 \right]^{3 \times 3}}^2 + \| \nabla p_{\varepsilon} \|_{\left[ L^2(\Omega'' \cap \Omega_{\varepsilon})\right]^3}^2 & \leq C \big[\varepsilon^{-2} \| \nabla u_{\varepsilon} \|_{\left[ L^2(\Omega_{\varepsilon})\right]^{3 \times 3}}^2 \\ & + \varepsilon^{-4}\| u_{\varepsilon} \|_{\left[L^2(\Omega_{\varepsilon})\right]^3}^2 + \|f \|^2_{L^2(\Omega_{\varepsilon})} \big].
\end{aligned}
\label{eq:masmoudi3}
\end{equation}
Estimate \eqref{eq:masmoudi0} is proved. We now conclude the proof of Theorem \ref{th:masmoudi}. We have, inserting \eqref{eq:thmasmoudi00} in the right hand side of \eqref{eq:masmoudi0},
$$\| D^2 u_{\varepsilon} \|_{\left[ L^2(\Omega'' \cap \Omega_{\varepsilon})^3\right]^{3 \times 3}} + \varepsilon^{-1} \| \nabla u_{\varepsilon} \|_{\left[L^2(\Omega_{\varepsilon})\right]^{3 \times 3}} + \varepsilon^{-2}\| u_{\varepsilon} \|_{\left[ L^2(\Omega_{\varepsilon})\right]^3} + \| \nabla p_{\varepsilon} \|_{\left[ L^2(\Omega'' \cap \Omega_{\varepsilon})\right]^3} \leq C \|f \|_{\left[ L^2(\Omega_{\varepsilon})\right]^3}.$$
It remains to show that
 \begin{equation}
 \| p_{\varepsilon} \|_{L^2(\Omega_{\varepsilon})/\mathbb{R}} \leq C \|f\|_{\left[L^2(\Omega_{\varepsilon})\right]^3}.
 \label{eq:thmasmoudi10}
 \end{equation} By Lemma \ref{lem:estimpres} stated in the appendix and the first line of \eqref{eq:masmoudi6}, we get
$$\| p_{\varepsilon}\|_{L^2(\Omega_{\varepsilon})/\mathbb{R}} \leq C \varepsilon^{-1} \left[ \| \nabla u_{\varepsilon} \|_{\left[ L^2(\Omega_{\varepsilon})\right]^{3\times 3}} + C\| f \|_{\left[ H^{-1}(\Omega_{\varepsilon})\right]^3} \right].$$
We now show that  
\begin{equation}
\| f \|_{\left[ H^{-1}(\Omega_{\varepsilon})\right]^3} \leq C \varepsilon \| f \|_{\left[L^2(\Omega_{\varepsilon})\right]^3}.
\label{eq:thmasmoudi8}
\end{equation}
Indeed, for any $\phi \in \left[H^1_0(\Omega_{\varepsilon})\right]^3$, we write that, using successively Cauchy-Schwarz inequality and Poincar\'e inequality (see Lemma \ref{lem:poinc}),
$$\begin{aligned} 
\langle f, \phi \rangle  = \int_{\Omega_{\varepsilon}} f \cdot \phi \leq \| f \|_{\left[ L^2(\Omega_{\varepsilon})\right]^3} \| \phi \|_{\left[L^2(\Omega_{\varepsilon})\right]^3} & \leq C \varepsilon \| f \|_{\left[ L^2(\Omega_{\varepsilon}) \right]^3} \| \nabla \phi \|_{\left[ L^2(\Omega_{\varepsilon})\right]^{3 \times 3}} \\ & \leq C \varepsilon \| f \|_{\left[L^2(\Omega_{\varepsilon})\right]^3} \| \phi \|_{\left[H^1_0(\Omega_{\varepsilon})\right]^3}
\end{aligned}$$  
Thus \eqref{eq:thmasmoudi8}. 
Finally, we conclude with the use of \eqref{eq:thmasmoudi00} that 
$$\| p_{\varepsilon} \|_{L^2(\Omega_{\varepsilon})/\mathbb{R}} \leq C \varepsilon^{-1} \| \nabla u_{\varepsilon} \|_{\left[ L^2(\Omega_{\varepsilon})\right]^{3 \times 3}} + C \| f \|_{\left[ L^2(\Omega_{\varepsilon})\right]^3} \leq C \|f\|_{\left[L^2(\Omega_{\varepsilon})\right]^3}.$$
This proves \eqref{eq:thmasmoudi10} and concludes the proof of Theorem \ref{th:masmoudi}.
\end{proof}

%Poincare + estimations H^1

%On refait Masmoudi dans un cadre Hilbertien avec des outils plus simples. \\

%\textbf{Théorème} : estimation $H^2$ en non périodique \\

%\textbf{Rem} : on peut avoir des estimations $H^s$, $s$ quelconque \\

%\underline{Estimation jusqu'au bord macro ?}

%\underline{Uniformité en les cellules des constantes de régularité?}

\subsection{Proof of Theorem \ref{th:cor}}
\label{sect:proofcorr}

We use the periodic correctors $(w_j^{\mathrm{per}},p_j^{\mathrm{per}})$ defined in \eqref{eq:correcteur} and we search $w_j$ and $p_j$ in the form $w_j = w_j^{\mathrm{per}} + \widetilde{w_j}$ and $p_j = p_j^{\mathrm{per}} + \widetilde{p_j}$. We recall (see the last paragraph of Subsection \ref{section12}) that $w_j^{\mathrm{per}}$ is extended by zero in $\mathcal{O}^{\mathrm{per}}$ and that $p_j^{\mathrm{per}}$ is extended by a constant $\lambda_j$. The Stokes system defining $(\widetilde{w_j},\widetilde{p_j})$ is
\begin{equation}
\begin{cases}
\begin{aligned}
- \Delta \widetilde{w_j} + \nabla \widetilde{p_j} & = e_j + \Delta w_j^{\mathrm{per}} - \nabla p_j^{\mathrm{per}} \quad \mathrm{in} \quad \mathbb{R}^3 \setminus \overline{\mathcal{O}} \\
\mathrm{div} \ \widetilde{w_j} &= 0 \\
\widetilde{w_j} & = \CC - \BB w_j^{\mathrm{per}} \quad \mathrm{on} \quad \partial \mathcal{O}.
\end{aligned}
\end{cases}
\label{eq:theoremcor1}
\end{equation}
The proof consists in applying Lax-Milgram's Lemma to  \eqref{eq:theoremcor1}. We first need to prove some preparatory Lemmas. In the sequel, we will use the notation $$T_j :=  e_j + \Delta w_j^{\mathrm{per}} - \nabla p_j^{\mathrm{per}}$$ for $j \in \{1,2,3\}$.

\begin{lemme} Suppose that Assumption \textbf{(A3)} is satisfied.
For all $1 < q < +\infty$, we have that $T_j \in \left[W^{-1,q'}\BB(\mathbb{R}^3 \setminus \overline{\mathcal{O}}) \right]^3$, where $q' = q/(q-1)$.
\label{lem:regularite_T}
\end{lemme}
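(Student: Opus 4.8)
The plan is to show that $T_j = e_j + \Delta w_j^{\mathrm{per}} - \nabla p_j^{\mathrm{per}}$ defines a bounded linear functional on $[W^{1,q}_0(\mathbb{R}^3 \setminus \overline{\mathcal{O}})]^3$ for every $1 < q < +\infty$, by splitting $T_j$ into three pieces and estimating each separately. The term $e_j$ is locally in every $L^{q'}$ but not globally (it does not decay at infinity), so the point is genuinely to exploit the corrector equation \eqref{eq:correcteur}: on the \emph{periodic} complement $\mathbb{R}^3 \setminus \overline{\mathcal{O}^{\mathrm{per}}}$ we have the exact identity $-\Delta w_j^{\mathrm{per}} + \nabla p_j^{\mathrm{per}} = e_j$, hence $T_j = 0$ there. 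Therefore $T_j$ is supported in the symmetric-difference region $\bigcup_k (\mathcal{O}_k \Delta \mathcal{O}_k^{\mathrm{per}})$ together with the parts of $\partial \mathcal{O}^{\mathrm{per}}$ and $\partial \mathcal{O}$ where $w_j^{\mathrm{per}}$ and $p_j^{\mathrm{per}}$ have jumps after the zero/constant extension. The key quantitative input is \eqref{eq:diffsym}, namely $\sum_k |\mathcal{O}_k \Delta \mathcal{O}_k^{\mathrm{per}}| < +\infty$, which follows from \textbf{(A3)}.

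Concretely, I would proceed as follows. First, fix $\phi \in [\mathcal{D}(\mathbb{R}^3 \setminus \overline{\mathcal{O}})]^3$ and compute $\langle T_j, \phi\rangle = \int (e_j\cdot\phi - \nabla w_j^{\mathrm{per}} : \nabla\phi - p_j^{\mathrm{per}}\,\mathrm{div}\,\phi)$ after integrating by parts, being careful about the boundary/jump terms arising from the extensions (the zero extension of $w_j^{\mathrm{per}}$ is globally $H^1$ since $w_j^{\mathrm{per}} = 0$ on $\partial\mathcal{O}^{\mathrm{per}}$, but the constant extension of $p_j^{\mathrm{per}}$ produces a jump across $\partial\mathcal{O}^{\mathrm{per}}$). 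Second, observe that on any cell $Q_k$ with $\mathcal{O}_k = \mathcal{O}_k^{\mathrm{per}}$ the integrand telescopes to zero by \eqref{eq:correcteur}; so the integral reduces to a sum over cells where $\mathcal{O}_k \neq \mathcal{O}_k^{\mathrm{per}}$, restricted to $\mathcal{O}_k \Delta \mathcal{O}_k^{\mathrm{per}}$ (plus the corresponding boundary pieces). Third, on each such cell use interior elliptic regularity for \eqref{eq:correcteur} — $\mathcal{O}_0^{\mathrm{per}}$ is $\mathcal{C}^{2,\alpha}$ and $Q\setminus\overline{\mathcal{O}_0^{\mathrm{per}}}$ is a fixed nice domain — to get $\|w_j^{\mathrm{per}}\|_{W^{1,\infty}} + \|p_j^{\mathrm{per}}\|_{L^\infty}$ bounded by a constant independent of $k$ (by periodicity it is literally the same constant). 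Then Hölder's inequality on $\mathcal{O}_k \Delta \mathcal{O}_k^{\mathrm{per}}$ gives a bound $\lesssim |\mathcal{O}_k\Delta\mathcal{O}_k^{\mathrm{per}}|^{1/q}\,\|\phi\|_{W^{1,q}(Q_k)}$ (or $|\cdot|^{1/q}\|\nabla\phi\|_{L^q}$ for the terms with $\nabla\phi$), and summing over $k$ via Hölder in the counting measure ($\ell^{q/(q-1)}$–$\ell^q$ duality, or just $\sum a_k^{1/q} b_k \le (\sum a_k)^{1/q}(\sum b_k^{q'})^{1/q'}$ with $a_k = |\mathcal{O}_k\Delta\mathcal{O}_k^{\mathrm{per}}|$) yields $|\langle T_j,\phi\rangle| \le C \left(\sum_k |\mathcal{O}_k\Delta\mathcal{O}_k^{\mathrm{per}}|\right)^{1/q} \|\phi\|_{W^{1,q}(\mathbb{R}^3\setminus\overline{\mathcal{O}})}$, which is finite by \eqref{eq:diffsym}. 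Density of $\mathcal{D}$ in $W^{1,q}_0$ then extends the functional, proving $T_j \in [W^{-1,q'}(\mathbb{R}^3\setminus\overline{\mathcal{O}})]^3$.

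The main obstacle I anticipate is bookkeeping the boundary/jump contributions correctly: after extending $w_j^{\mathrm{per}}$ by zero and $p_j^{\mathrm{per}}$ by $\lambda_j$, the ``distributional'' object $\Delta w_j^{\mathrm{per}} - \nabla p_j^{\mathrm{per}}$ picks up a surface measure on $\partial\mathcal{O}^{\mathrm{per}}$ (from the normal-derivative jump of $w_j^{\mathrm{per}}$ and the jump of $p_j^{\mathrm{per}}$) and one must check these surface terms also localize to cells with $\mathcal{O}_k \neq \mathcal{O}_k^{\mathrm{per}}$ — because on an unperturbed cell, $\partial\mathcal{O}_k^{\mathrm{per}} = \partial\mathcal{O}_k$ lies in $\partial\mathcal{O}$, so $\phi$ vanishes there and the surface term drops. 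Equivalently, one avoids this entirely by never extending: work directly in $\mathbb{R}^3\setminus\overline{\mathcal{O}}$, write $\langle T_j,\phi\rangle$ as the cell-by-cell sum of $\int_{Q_k\setminus\overline{\mathcal{O}_k}}(e_j\cdot\phi - \nabla w_j^{\mathrm{per}}:\nabla\phi - p_j^{\mathrm{per}}\,\mathrm{div}\,\phi)$, and on an unperturbed cell recognize this as the weak formulation of \eqref{eq:correcteur} tested against $\phi\in W^{1,q}_0(Q_k\setminus\overline{\mathcal{O}_k})$, hence zero. This is the cleaner route and I would take it; the trace of $\phi$ on the ``new'' boundary pieces $\partial\mathcal{O}_k \cap (\mathcal{O}_k^{\mathrm{per}})$ contributes nothing since $\phi \equiv 0$ on $\partial\mathcal{O}$, and the only surviving terms are the volume integrals over $\mathcal{O}_k^{\mathrm{per}}\setminus\overline{\mathcal{O}_k}$ and $\mathcal{O}_k\setminus\overline{\mathcal{O}_k^{\mathrm{per}}}$, precisely $\mathcal{O}_k\Delta\mathcal{O}_k^{\mathrm{per}}$ up to null sets, closing the argument as above.
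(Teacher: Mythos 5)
Your first route is essentially the paper's proof: extend $\phi$ by zero into $\mathcal{O}$, use that $\nabla w_j^{\mathrm{per}}$ and $p_j^{\mathrm{per}}-\lambda_j$ vanish in $\mathcal{O}^{\mathrm{per}}$ to move the integrals to $\mathbb{R}^3\setminus\overline{\mathcal{O}^{\mathrm{per}}}$, integrate by parts there and invoke \eqref{eq:correcteur} so that the volume contribution collapses to $\int_{\mathcal{O}^{\mathrm{per}}\setminus\overline{\mathcal{O}}}e_j\cdot\phi$ and the only surviving pieces are surface integrals on $\partial\mathcal{O}^{\mathrm{per}}$, which localize to the perturbed cells because $\phi\equiv 0$ on $\partial\mathcal{O}$. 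The paper then controls the surface terms with the trace embedding $W^{1,1}(\mathcal{O}_k^{\mathrm{per}})\to L^1(\partial\mathcal{O}_k^{\mathrm{per}})$ (uniform in $k$ by translation invariance) and uniform $L^\infty$ bounds on $p_j^{\mathrm{per}}$ and $\partial_n w_j^{\mathrm{per}}$ coming from periodic Schauder regularity; you allude to both ingredients.

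Your preferred ``cleaner route,'' however, has a genuine gap. The claim that on an unperturbed cell $Q_k$ (i.e.\ $\mathcal{O}_k=\mathcal{O}_k^{\mathrm{per}}$) the quantity $\int_{Q_k\setminus\overline{\mathcal{O}_k}}\big(e_j\cdot\phi-\nabla w_j^{\mathrm{per}}:\nabla\phi+(p_j^{\mathrm{per}}-\lambda_j)\,\mathrm{div}\,\phi\big)$ vanishes is false: the restriction $\phi|_{Q_k}$ is not in $W^{1,q}_0(Q_k\setminus\overline{\mathcal{O}_k})$, nor is it $Q$-periodic, since it need not vanish on $\partial Q_k$. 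Integrating by parts on the single cell and using the corrector equation leaves the flux term
$$-\int_{\partial Q_k}\Big(\partial_n w_j^{\mathrm{per}}-(p_j^{\mathrm{per}}-\lambda_j)\,n\Big)\cdot\phi,$$
which is generically nonzero. These cell-face contributions do cancel once you sum over the whole lattice (neighbouring cells share a face with opposite outward normals and $w_j^{\mathrm{per}},p_j^{\mathrm{per}},\phi$ are smooth across it), but that cancellation involves unperturbed and perturbed cells alike, so you cannot discard the unperturbed cells one at a time; you are driven back to the global integration by parts that the paper performs, which packages exactly this cancellation and produces the boundary integrals on $\partial\mathcal{O}^{\mathrm{per}}$ that you were trying to avoid.

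Two smaller points of bookkeeping. H\"older on a set $S$ gives $\int_S|\phi|\le|S|^{1/q'}\|\phi\|_{L^q(S)}$, so the per-cell factor should be $|\mathcal{O}_k\Delta\mathcal{O}_k^{\mathrm{per}}|^{1/q'}$, not $|\cdot|^{1/q}$; as written, your $\ell^p$ duality step would bound $\langle T_j,\phi\rangle$ by $\|\phi\|_{W^{1,q'}}$, which is the wrong space for $T_j\in[W^{-1,q'}]^3$. The paper sidesteps this entirely with a single H\"older estimate over the set $\mathcal{O}^{\mathrm{per}}\setminus\overline{\mathcal{O}}$, whose total measure is finite by \eqref{eq:diffsym}. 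Also note that with the convention $\langle\nabla p,\phi\rangle=-\int p\,\mathrm{div}\,\phi$, the pressure term in the weak form carries a plus sign, $+\int(p_j^{\mathrm{per}}-\lambda_j)\,\mathrm{div}\,\phi$.
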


\begin{proof}
Let $\phi \in \left[\mathcal{D}(\mathbb{R}^3 \setminus \overline{\mathcal{O}})\right]^3$. We extend $\phi$ by 0 in the perforations.
We estimate $\langle T_j,\phi \rangle$ by an integration by parts:
$$\begin{aligned}
\langle T_j,\phi \rangle & = \langle e_j + \Delta w_j^{\mathrm{per}} - \nabla p_j^{\mathrm{per}} , \phi \rangle \\
& = \int_{\mathbb{R}^3 \setminus \overline{\mathcal{O}}} e_j \cdot \phi - \int_{\mathbb{R}^3 \setminus\overline{\mathcal{O}}} \nabla w_j^{\mathrm{per}} : \nabla \phi + \int_{\mathbb{R}^3 \setminus \overline{\mathcal{O}}} (p_j^{\mathrm{per}} - \lambda_j) \mathrm{div}(\phi) \\
& = \int_{\mathbb{R}^3} e_j \cdot \phi - \int_{\mathbb{R}^3} \nabla w_j^{\mathrm{per}} : \nabla \phi + \int_{\mathbb{R}^3} \left(p_j^{\mathrm{per}} - \lambda_j \right) \mathrm{div}(\phi) \\
& = \int_{\mathbb{R}^3} e_j \cdot \phi - \int_{\mathbb{R}^3 \setminus \overline{\mathcal{O}^{\mathrm{per}}}} \nabla w_j^{\mathrm{per}} : \nabla \phi + \int_{\mathbb{R}^3 \setminus \overline{\mathcal{O}^{\mathrm{per}}}} ( p_j^{\mathrm{per}} - \lambda_j) \mathrm{div}(\phi).
\end{aligned}$$
Since $w_j^{\mathrm{per}}$ (resp. $p_j^{\mathrm{per}} - \lambda_j$) is of class $\mathcal{C}^{2,\alpha}$ (resp. of class $\mathcal{C}^{1,\alpha}$) in $\mathbb{R}^3 \setminus \mathcal{O}^{\mathrm{per}}$ (see \cite[Theorem IV.7.1]{galdi2011introduction}), we may integrate by parts and find that
$$\begin{aligned}
\int_{\mathbb{R}^3 \setminus \overline{\mathcal{O}^{\mathrm{per}}}} \nabla w_j^{\mathrm{per}} : \nabla \phi & = \int_{\partial \mathcal{O}^{\mathrm{per}}} \frac{\partial w_j^{\mathrm{per}}}{\partial n} \cdot \phi - \int_{\mathbb{R}^3 \setminus \overline{\mathcal{O}^{\mathrm{per}}}} \Delta w_j^{\mathrm{per}} \cdot \phi, \\
\int_{\mathbb{R}^3 \setminus \overline{\mathcal{O}^{\mathrm{per}}}} (p_j^{\mathrm{per}} - \lambda_j) \mathrm{div}(\phi) & = \int_{\partial \mathcal{O}^{\mathrm{per}}} (p_j^{\mathrm{per}}- \lambda_j) \phi \cdot n - \int_{\mathbb{R}^3 \setminus \overline{\mathcal{O}^{\mathrm{per}}}} \nabla p_j^{\mathrm{per}} \cdot \phi, 
\end{aligned}$$
where we use the notation\CC s\BB
$$\frac{\partial w_j^{\mathrm{per}}}{\partial n} := \left( \frac{\partial w_j^{1,\mathrm{per}}}{\partial n},\frac{\partial w_j^{2,\mathrm{per}}}{\partial n},\frac{\partial w_j^{3,\mathrm{per}}}{\partial n} \right)^T \quad \text{and} \quad w_j^{i,\mathrm{per}} = w_j^{\mathrm{per}} \cdot e_i,$$ for $i,j \in \{1,2,3\}$.
Thus,
$$
\begin{aligned} \langle T_j,\phi \rangle & = \int_{\mathbb{R}^3} e_j \cdot \phi + \int_{\mathbb{R}^3 \setminus \overline{\mathcal{O}^{\mathrm{per}}}} \left[ \Delta w_j^{\mathrm{per}} - \nabla p_j^{\mathrm{per}} \right] \cdot \phi 
+ \int_{\partial \mathcal{O}^{\mathrm{per}}} (p_j^{\mathrm{per}}- \lambda_j) \phi \cdot n 
- \int_{\partial \mathcal{O}^{\mathrm{per}}} \frac{\partial w_j^{\mathrm{per}}}{\partial n} \cdot \phi \\
& = \int_{\mathbb{R}^3} e_j \cdot \phi - \int_{\mathbb{R}^3 \setminus \overline{\mathcal{O}^{\mathrm{per}}}} e_j \cdot \phi + \int_{\partial \mathcal{O}^{\mathrm{per}}} (p_j^{\mathrm{per}}- \lambda_j) \phi \cdot n 
- \int_{\partial \mathcal{O}^{\mathrm{per}}} \frac{\partial w_j^{\mathrm{per}}}{\partial n} \cdot \phi \\
& = \int_{\mathcal{O}^{\mathrm{per}}\setminus \overline{\mathcal{O}}} e_j \cdot \phi 
 + \int_{\partial \mathcal{O}^{\mathrm{per}}} (p_j^{\mathrm{per}}- \lambda_j) \phi \cdot n 
- \int_{\partial \mathcal{O}^{\mathrm{per}}} \frac{\partial w_j^{\mathrm{per}}}{\partial n} \cdot \phi. \\
& = (A) + (B) + (C)
\end{aligned}$$
We treat each term separetely.

\medskip

\textbf{Term (A)}. By H\"older inequality and Assumption \textbf{(A3)} (more precisely \eqref{eq:diffsym}), we obtain that
$$\left| \int_{\mathcal{O}^{\mathrm{per}}\setminus \overline{\mathcal{O}}} e_j \cdot \phi \right| \leq \left|\mathcal{O}^{\mathrm{per}} \setminus \overline{\mathcal{O}} \right|^{\frac{1}{q'}} \| \phi \|_{\left[ L^q(\mathbb{R}^3 \setminus \overline{\mathcal{O}})\right]^3 } \leq C  \left\| \phi \right\|_{ \left[W^{1,q}(\mathbb{R}^3 \setminus \overline{\mathcal{O}})\right]^3}.$$

\medskip

\textbf{Term (B)}. We have by standard regularity results (see \cite[Theorem IV.7.1]{galdi2011introduction}) that $p_j^{\mathrm{per}} \in L^{\infty}(\partial \mathcal{O}_0^{\mathrm{per}})$. We apply a Trace Theorem $W^{1,1}(\mathcal{O}^{\mathrm{per}}_0) \rightarrow L^1(\partial \mathcal{O}^{\mathrm{per}}_0)$ (see e.g. \cite[Theorem 1, p. 258]{evans}) that yields a constant $C$, which is by translation invariance independent of $k$, such that for all $k \in \mathbb{Z}^3$, 
\begin{equation}
\| \phi \|_{\left[ L^1(\partial \mathcal{O}_k^{\mathrm{per}}) \right]^3} \leq C \| \phi \|_{\left[W^{1,1}(\mathcal{O}_k^{\mathrm{per}})\right]^3}.
\label{eq:trace}
\end{equation} By applying \eqref{eq:trace} in the second inequality, we get
$$\begin{aligned}
\left| \int_{\partial \mathcal{O}^{\mathrm{per}}} (p_j^{\mathrm{per}}- \lambda_j) \phi \cdot n \right|
& \leq \left\| p_j^{\mathrm{per}}- \lambda_j \right\|_{L^{\infty}(\partial \mathcal{O}^{\mathrm{per}})} \int_{\partial \mathcal{O}^{\mathrm{per}}} |\phi| \\
& = C \sum_{k \in \mathbb{Z}^3} \int_{\partial \mathcal{O}_k^{\mathrm{per}}} |\phi| \leq C \sum_{k \in \mathbb{Z}^3}  \int_{\mathcal{O}_k^{\mathrm{per}}}  |\phi| + | \nabla \phi| = C \int_{\mathcal{O}^{\mathrm{per}} \setminus \overline{\mathcal{O}}}  |\phi| + |\nabla \phi|,
\end{aligned}$$
where we used in the last equality that $\phi = 0$ in $\mathcal{O}$. Using  \eqref{eq:diffsym},
we conclude thanks to H\"older inequality that
$$\left| \int_{\partial \mathcal{O}^{\mathrm{per}}} (p_j^{\mathrm{per}}- \lambda_j) \phi \cdot n \right| \leq C \left|\mathcal{O}^{\mathrm{per}} \setminus \overline{\mathcal{O}} \right|^{\frac{1}{q'}} \left[ \| \phi \|_{\left[ L^q( \mathcal{O}^{\mathrm{per}} \setminus \overline{\mathcal{O}})\right]^3} + \| \nabla \phi \|_{\left[ L^q(\mathcal{O}^{\mathrm{per}} \setminus \overline{\mathcal{O}}) \right]^{3\times 3} } \right] \leq C \| \phi \|_{\left[ W^{1,q}(\mathbb{R}^3 \setminus \overline{\mathcal{O}}) \right]^3}.$$

\medskip

\textbf{Term (C).} The argument is similar to \textbf{Term (B)}. This gives the existence of a constant $C > 0$ such that:
$$\left| \int_{\partial \mathcal{O}^{\mathrm{per}}} \frac{\partial w_j^{\mathrm{per}}}{\partial n} \cdot \phi \right| \leq C \| \phi \|_{\left[ W^{1,q}(\mathbb{R}^3 \setminus \overline{\mathcal{O}})\right]^3},$$
where $C$ is independent of $\phi$. 
We conclude that there exists a constant $C = C(q) > 0$ such that
$$\forall \phi \in \left[ \mathcal{D} \left(\mathbb{R}^3 \setminus \overline{\mathcal{O}} \right) \right]^3, \quad |\langle T_j,\phi \rangle | \leq C \| \phi \|_{\left[W^{1,q}(\mathbb{R}^3 \setminus \overline{\mathcal{O}} )\right]^3}.$$
This proves the Lemma. 
\end{proof}

\begin{lemme} Suppose that Assumptions \textbf{(A1)} and \textbf{(A3)} are satisfied. For all $1 < q < + \infty$, there exists a function $\phi_j \in \left[ W^{1,q}(\mathbb{R}^3) \right]^3$ such that $\phi_j = w_j^{\mathrm{per}}$ on $\partial \mathcal{O}$.
\label{lem:fonctionaux}
\end{lemme}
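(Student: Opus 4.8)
The plan is to obtain $\phi_j$ by multiplying the periodic corrector $w_j^{\mathrm{per}}$ (extended by $0$ in the periodic holes) by a cutoff that confines it to a thin tubular neighbourhood of $\bigcup_{k\in\mathbb{Z}^3}\partial\mathcal{O}_k^{\mathrm{per}}$, of width of order $\alpha_k$ in each cell $Q_k$. Three facts drive the argument: $w_j^{\mathrm{per}}$ vanishes on $\partial\mathcal{O}^{\mathrm{per}}$ and is Lipschitz up to it; Assumption \textbf{(A3)} forces $\partial\mathcal{O}_k$ into the $\alpha_k$-band around $\partial\mathcal{O}_k^{\mathrm{per}}$; and $(\alpha_k)_k\in\ell^1(\mathbb{Z}^3)$.

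First I would record the boundary behaviour of $w_j^{\mathrm{per}}$. Since $\mathcal{O}_0^{\mathrm{per}}$ is of class $\mathcal{C}^{2,\alpha}$ and $\mathbb{R}^3\setminus\overline{\mathcal{O}^{\mathrm{per}}}$ is connected, Stokes regularity (\cite[Theorem IV.7.1]{galdi2011introduction}) and $Q$-periodicity give that $w_j^{\mathrm{per}}$ is $\mathcal{C}^1$ up to $\partial\mathcal{O}^{\mathrm{per}}$ with $L:=\|\nabla w_j^{\mathrm{per}}\|_{L^\infty(\mathbb{R}^3\setminus\overline{\mathcal{O}^{\mathrm{per}}})}<\infty$; recall $w_j^{\mathrm{per}}=0$ on $\partial\mathcal{O}^{\mathrm{per}}$ and, by convention, in $\mathcal{O}^{\mathrm{per}}$. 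As $\partial\mathcal{O}^{\mathrm{per}}$ admits, by periodicity and $\mathcal{C}^2$-regularity, a tubular neighbourhood of uniform width $\rho_0>0$, I would deduce, via the normal segment from $x$ to its nearest point on $\partial\mathcal{O}^{\mathrm{per}}$,
\[
|w_j^{\mathrm{per}}(x)|\ \leq\ L\, d\bigl(x,\partial\mathcal{O}^{\mathrm{per}}\bigr)\qquad\text{whenever}\quad d\bigl(x,\partial\mathcal{O}^{\mathrm{per}}\bigr)<\rho_0 .
\]
I would also note that the extension-by-zero $w_j^{\mathrm{per}}$ is Lipschitz, hence lies in $W^{1,q}_{\mathrm{loc}}(\mathbb{R}^3)$ for every $q$, its weak gradient being bounded and only discontinuous across $\partial\mathcal{O}^{\mathrm{per}}$. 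Next, from \textbf{(A3)}: if $x\in\partial\mathcal{O}_k$ then $x\notin\mathcal{O}_k^{\mathrm{per},-}(\alpha_k)$ and $x\in\overline{\mathcal{O}_k^{\mathrm{per},+}(\alpha_k)}$, and distinguishing $x\in\mathcal{O}_k^{\mathrm{per}}$ from $x\notin\mathcal{O}_k^{\mathrm{per}}$ yields $d(x,\partial\mathcal{O}_k^{\mathrm{per}})\leq\alpha_k$. Writing $B_k(t):=\{x\in Q_k:\ d(x,\partial\mathcal{O}_k^{\mathrm{per}})<t\}$, this gives $\partial\mathcal{O}_k\subset\overline{B_k(\alpha_k)}\subset B_k(2\alpha_k)$.

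Then I would set $\phi_j:=\sum_{k\in\mathbb{Z}^3}\eta_k\,w_j^{\mathrm{per}}$ with $\eta_k\in\mathcal{D}(Q_k)$, $0\leq\eta_k\leq 1$ (each $\eta_k$ extended by zero outside $Q_k$). Since $\mathcal{O}_0^{\mathrm{per}}\subset\subset Q$ and $(\alpha_k)\in\ell^1$, for all but finitely many $k$ one has $3\alpha_k<\rho_0$ and $\overline{B_k(3\alpha_k)}\subset\subset Q_k$; for those $k$ I take $\operatorname{supp}\eta_k\subset B_k(3\alpha_k)$, $\eta_k\equiv 1$ on $B_k(2\alpha_k)$, and $\|\nabla\eta_k\|_{L^\infty}\leq C\alpha_k^{-1}$. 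For the finite exceptional set I use $\mathcal{O}_k\subset\subset Q_k$ (a consequence of \textbf{(A1)}--\textbf{(A3)}) to take $\eta_k\equiv 1$ on a neighbourhood of $\overline{\mathcal{O}_k}\cup\partial\mathcal{O}_k^{\mathrm{per}}$. Because the supports $\operatorname{supp}\eta_k\subset\subset Q_k$ are pairwise disjoint, the sum is locally finite (only one term at a time), $\phi_j\in W^{1,q}_{\mathrm{loc}}(\mathbb{R}^3)$ by the first step, and $\phi_j\equiv0$ near each $\partial Q_k$; moreover, since $\eta_k\equiv1$ on $B_k(2\alpha_k)\supset\partial\mathcal{O}_k$ and $\eta_{k'}\equiv 0$ there for $k'\neq k$, we get $\phi_j=w_j^{\mathrm{per}}$ on $\partial\mathcal{O}$. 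To finish, I would estimate the norm: on $B_k(3\alpha_k)$ the first step gives $|w_j^{\mathrm{per}}|\leq 3L\alpha_k$ and $|\nabla w_j^{\mathrm{per}}|\leq L$, hence $|(\nabla\eta_k)\,w_j^{\mathrm{per}}|\leq 3CL$; together with the uniform tubular-neighbourhood bound $|B_k(3\alpha_k)|\leq C\alpha_k$ this yields $\int_{Q_k}\bigl(|\phi_j|^q+|\nabla\phi_j|^q\bigr)\leq C\alpha_k$ for all but finitely many $k$, the remaining cells contributing a finite amount. Summing over $k$ and using $\sum_k\alpha_k<\infty$ gives $\phi_j\in[W^{1,q}(\mathbb{R}^3)]^3$.

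The step I expect to be the real obstacle is reconciling the width $\alpha_k$ of the transition band with the cutoff: since $\|\nabla\eta_k\|_\infty$ inevitably grows like $\alpha_k^{-1}$, one cannot dispense with the quantitative boundary estimate $|w_j^{\mathrm{per}}(x)|\lesssim d(x,\partial\mathcal{O}^{\mathrm{per}})$, and it is precisely that estimate — together with the $\ell^1$-summability of $(\alpha_k)$ — that makes the term $(\nabla\eta_k)w_j^{\mathrm{per}}$ harmless; a naive cutoff of fixed width would instead leave a non-summable contribution $\sum_k\|w_j^{\mathrm{per}}\|_{L^q(Q_k)}^q=\infty$. The other slightly technical ingredient is the uniform-in-$k$ estimate $|B_k(3\alpha_k)|\lesssim\alpha_k$, which follows from periodicity and the $\mathcal{C}^{2,\alpha}$-regularity of $\mathcal{O}_0^{\mathrm{per}}$.
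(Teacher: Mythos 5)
Your proof is correct and follows essentially the same strategy as the paper: localize $w_j^{\mathrm{per}}$ by a cell-wise cutoff supported in an $O(\alpha_k)$-band, use the Taylor/Lipschitz bound $|w_j^{\mathrm{per}}|\lesssim d(\cdot,\partial\mathcal{O}^{\mathrm{per}})$ to tame the $\alpha_k^{-1}$ factor coming from the gradient of the cutoff, and close with the $\ell^1$-summability of $(\alpha_k)$ to bound the total measure of the support. The only cosmetic difference is that the paper centers the cutoff on $\mathcal{O}_k^{\mathrm{per}}$ (and sets it to $0$ when $\mathcal{O}_k=\mathcal{O}_k^{\mathrm{per}}$) while you center it on $\partial\mathcal{O}_k^{\mathrm{per}}$; both choices yield the same estimate.
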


\begin{proof}
By Assumption \textbf{(A3)}, there exists a sequence $(\alpha_k)_{k \in \mathbb{Z}^3} \in \ell^1(\mathbb{Z}^3)$ such that for all $k \in \mathbb{Z}^3$, $\alpha_k > 0$ and
$$\left\{ x \in \mathcal{O}_k^{\mathrm{per}}, \ \mathrm{d}(x,\partial \mathcal{O}_k^{\mathrm{per}}) > \alpha_k \right\} \subset \mathcal{O}_k \subset \{x \in Q_k, \ \mathrm{d}(x,\mathcal{O}_k^{\mathrm{per}}) < \alpha_k \}.$$

\medskip

\noindent Let $k \in \mathbb{Z}^3$. 

\medskip

\noindent If $\mathcal{O}_k = \mathcal{O}_k^{\mathrm{per}}$, then we define the function $\chi_k$ by $\chi_k(x) = 0$ for all $x\in Q_k$.

\medskip

\noindent If $\mathcal{O}_k \neq \mathcal{O}_k^{\mathrm{per}}$, there are two cases (see Figure \ref{fig:lem3.4}). 

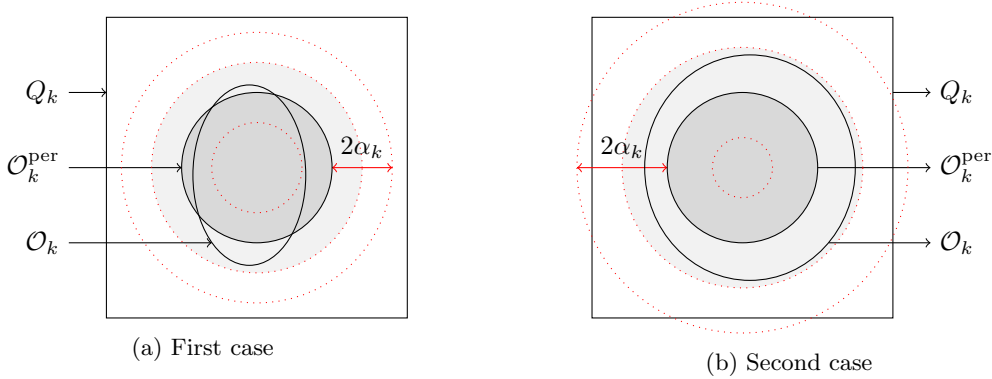
\begin{figure}[h!]
\centering
\begin{subfigure}{.5\textwidth}
\centering
\begin{tikzpicture}
\draw (-2,-2) rectangle (2,2);
\fill[gray!10] (0,0) circle (1.4);
\fill[gray!30] (0,0) circle(1);
\draw[->] (-2.5,0)--(-1,0);
\draw[->] (-2.5,1)--(-2,1);
\draw[->] (-2.5,-1)--(-0.6,-1);
\draw (-2.5,0) node[left]{$\mathcal{O}_k^{\mathrm{per}}$};
\draw (-2.5,1) node[left]{$Q_k$};
\draw (-2.5,-1) node[left]{$\mathcal{O}_k$};
\draw[red,dotted] (0,0) circle(1.4);
\draw[red,dotted] (0,0) circle(0.6);
\draw[red,dotted] (0,0) circle(1.8);
\draw[red,<->] (1,0)--(1.8,0);
\draw (1.4,0) node[above]{$2\alpha_k$};
\draw (0,0) circle(1);
\draw (-0.1,-0.1) ellipse(0.75cm and 1.2cm);
\end{tikzpicture}
\caption{First case}
\end{subfigure}%
\begin{subfigure}{.5\textwidth}
\centering
\begin{tikzpicture}
\draw (-2,-2) rectangle (2,2);
\fill[gray!10] (0,0) circle (1.6);
\fill[gray!30] (0,0) circle(1);
\draw[<-] (2.5,0)--(1,0);
\draw[<-] (2.5,1)--(2,1);
\draw[<-] (2.5,-1)--(1.15,-1);
\draw (2.5,0) node[right]{$\mathcal{O}_k^{\mathrm{per}}$};
\draw (2.5,1) node[right]{$Q_k$};
\draw (2.5,-1) node[right]{$\mathcal{O}_k$};
\draw[red,dotted] (0,0) circle(1.6);
\draw[red,dotted] (0,0) circle(0.4);
\draw[red,dotted] (0,0) circle(2.2);
\draw[red,<->] (-1,0)--(-2.2,0);
\draw (-1.6,0) node[above]{$2\alpha_k$};
\draw (0.1,0) ellipse (1.4cm and 1.5cm);
\draw (0,0) circle(1);
\end{tikzpicture}
\caption{Second case}
\end{subfigure}
\caption{Illustration of the proof of Lemma \ref{lem:fonctionaux}}
\label{fig:lem3.4}
\end{figure}

\medskip

\textbf{First case}. We have $\{x \in \mathbb{R}^3, \ \mathrm{d}(x,\mathcal{O}_k^{\mathrm{per}}) < 2 \alpha_k \} \subset Q_k.$
We consider a function $\chi_k$ which is smooth and compactly supported such that 
$$\begin{cases}
\begin{aligned}
\chi_k & = 1 \quad \mathrm{in} \quad \{x \in Q_k, \ \mathrm{d}(x,\mathcal{O}_k^{\mathrm{per}}) < \alpha_k \} \\
\chi_k & = 0 \quad \mathrm{in} \quad \{x \in Q_k, \ \mathrm{d}(x,\mathcal{O}_k^{\mathrm{per}}) < 2\alpha_k \}^c.
\end{aligned}
\end{cases}$$
We can choose $\chi_k$ such that the following estimates are satisfied:
\begin{equation}
\left| \chi_k \right| \leq 1 \ \ ; \ \ \left| \nabla \chi_k \right| \leq \frac{C}{\alpha_k} \ \ \mathrm{and} \ \ \ \left|\mathrm{supp}(\chi_k) \cap (Q_k \backslash \overline{\mathcal{O}_k^{\mathrm{per}}}) \right|  \leq C\alpha_k,
\label{eq:lemme8}
\end{equation}
where the constants $C$ are independent of $k$.

\medskip

\textbf{Second case}. We have 
$\{x \in \mathbb{R}^3, \ \mathrm{d}(x,\mathcal{O}_k^{\mathrm{per}}) < 2 \alpha_k \} \not\subset Q_k.$
We consider a smooth and compactly supported function $\chi_k$ such that
$$\begin{cases}
\begin{aligned}
\chi_k & = 1 \quad \mathrm{in} \quad \mathcal{O}_k \\
\chi_k & = 0 \quad \mathrm{outside} \quad \mathrm{of} \ Q_k.
\end{aligned}
\end{cases}$$
Because $\alpha_k \underset{|k| \rightarrow + \infty}{\longrightarrow} 0$ and because there exists $\delta > 0$ such that
$$\forall k \in \mathbb{Z}^3, \quad d(\mathcal{O}_k,\partial Q_k) \geq \delta,$$ there are only a finite number of such configurations. After possible changes of the constant $C$, we can suppose that \eqref{eq:lemme8} is valid for all $k \in \mathbb{Z}^3$. 

\medskip

\textbf{Conclusion}. We define
$$\phi_j := \left(\sum_{k \in \mathbb{Z}^3} \chi_k \right) w_j^{\mathrm{per}} \in \left[ W^{1,q}_{\mathrm{loc}}(\mathbb{R}^3) \right]^3.$$
We study the $W^{1,q}-$local norm of $\phi_j$. We fix $k \in \mathbb{Z}^3$ ; one has in $Q_k$:
$$\left| \nabla \phi_j \right| = \left| \nabla \left( \chi_k w_j^{\mathrm{per}} \right) \right| \leq \left| \nabla \chi_k \right| \left|w_j^{\mathrm{per}} \right| + \left|\nabla w_j^{\mathrm{per}} \right| \left|\chi_k \right|.$$
We now use that $\nabla w_j^{\mathrm{per}}$ is bounded and the inequalities \eqref{eq:lemme8}:  
$$\left| \nabla \phi_j \right| \leq C \alpha_k^{-1} |w_j^{\mathrm{per}}| + C.$$
To obtain that $|\nabla \phi_j|$ is bounded on its support, it suffices to show a bound of the type
$$|w_j^{\mathrm{per}}| \leq C \alpha_k \quad \mathrm{in} \quad \{x \in Q_k, \ \mathrm{d}(x,\mathcal{O}_k^{\mathrm{per}}) < 2 \alpha_k \}.$$ 
Since $w_j^{\mathrm{per}} = 0$ on $\mathcal{O}_k^{\mathrm{per}}$ and $\nabla w_j^{\mathrm{per}} \in L^{\infty}(Q)$, this estimate follows from a classical Taylor inequality. We conclude that
$$\exists C > 0, \ \forall k \in \mathbb{Z}^3, \ \forall x \in Q_k, \ \left| \nabla \phi_j (x)\right | \leq C.$$
Because
$$\forall k \in \mathbb{Z}^3, \ |\mathrm{supp}(\phi_j)\cap Q_k| = \left|\mathrm{supp}(\chi_k) \cap \left(Q_k \setminus \overline{\mathcal{O}_k^{\mathrm{per}}} \right) \right| = O(\alpha_k),$$
and because of Assumption \textbf{(A3)},
we conclude that $|\mathrm{supp}(\phi_j)| < + \infty$ and so $\nabla \phi_j \in \left[ L^q(\mathbb{R}^3) \right]^{3 \times 3}$. Similarly, $\phi_j \in \left[ L^q(\mathbb{R}^3) \right]^{3}$. This concludes the Lemma.
\end{proof}

We define, when $R>0$, $$\Omega^{R} := R \Omega \setminus \bigcup_{k \ \mathrm{s.t.} \ Q_k \subset R\Omega} \overline{\mathcal{O}_k}.$$ If $R = 1/\varepsilon$, one has $\Omega^R = \frac{1}{\varepsilon} \Omega_{\varepsilon}$.

\begin{lemme}
Let $T \in \left[ H^{-1}(\mathbb{R}^3 \setminus \overline{\mathcal{O}}) \right]^3$. The Stokes problem
\begin{equation}
\begin{cases}
\begin{aligned}
    - \Delta w + \nabla p & = T \quad \mathrm{in} \quad \mathbb{R}^3 \setminus \overline{\mathcal{O}} \\
    \mathrm{div}(w) & = 0 \\
    w & = 0 \quad \mathrm{on} \quad \partial \mathcal{O}
\end{aligned}
\end{cases}
\label{eq:lemmeEDP}
\end{equation}
admits a solution $(w,p)$ such that $(w,p) \in \left[ H^1_0 (\mathbb{R}^3 \setminus \overline{\mathcal{O}} ) \right]^3 \times L^2_{\mathrm{loc}}(\mathbb{R}^3 \setminus \overline{ \mathcal{O}})$ and $\nabla p \in \left[ H^{-1}(\mathbb{R}^3 \backslash \overline{\mathcal{O}}) \right]^3$. Moreover, for all $R > 0$, we have the estimate
\begin{equation}
    \left\| p - \lambda^R \right\|_{L^2(\Omega^R)} \leq C R \left[ \| \nabla w \|_{\left[ L^2(\mathbb{R}^3 \backslash \overline{\mathcal{O}}) \right]^{3 \times 3}} + \left\| T \right\|_{\left[H^{-1}(\mathbb{R}^3 \backslash \overline{\mathcal{O}})\right]^3} \right], \ \ \lambda^R = \frac{1}{|\Omega^R|} \int_{\Omega^R} p,
    \label{eq:estimationpression}
\end{equation}
\label{lem:stokes}
where $C$ is a constant independent of $T$ and $R$.
\end{lemme}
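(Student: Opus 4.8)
The plan is to solve \eqref{eq:lemmeEDP} by the classical Leray--Lax--Milgram argument on divergence-free fields, to recover the pressure by De Rham's theorem, and then to deduce \eqref{eq:estimationpression} from the inf--sup inequality for the divergence operator on the bounded perforated domain $\Omega^R$, whose constant degrades precisely like $R$.

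\emph{Existence of $w$.} First I would establish a Poincaré inequality on the unbounded perforated domain: arguing as in Lemma \ref{lem:poinc} with $\varepsilon = 1$, i.e.\ applying on each $Q_k \setminus \overline{\mathcal{O}_k}$ the cell estimate underlying \eqref{eq:poinc1} (which uses \textbf{(A1)} and \textbf{(A3)}) and summing over $k \in \mathbb{Z}^3$, one gets $\|v\|_{[L^2(\mathbb{R}^3\setminus\overline{\mathcal{O}})]^3} \le C \|\nabla v\|_{[L^2(\mathbb{R}^3\setminus\overline{\mathcal{O}})]^{3\times 3}}$ for all $v \in [H^1_0(\mathbb{R}^3\setminus\overline{\mathcal{O}})]^3$, with $C$ absolute. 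Hence $\|\nabla \cdot\|_{L^2}$ is a norm on $[H^1_0(\mathbb{R}^3\setminus\overline{\mathcal{O}})]^3$ equivalent to the $H^1$ norm, and Lax--Milgram applies in $V := \{v \in [H^1_0(\mathbb{R}^3\setminus\overline{\mathcal{O}})]^3 : \mathrm{div}\,v = 0\}$ to the coercive form $a(v,\varphi) := \int_{\mathbb{R}^3\setminus\overline{\mathcal{O}}} \nabla v : \nabla \varphi$ and the functional $\varphi \mapsto \langle T,\varphi\rangle$, which is bounded on $V$ since $T \in H^{-1}$ and $\|\varphi\|_{H^1} \le C\|\nabla\varphi\|_{L^2}$. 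This produces a unique $w \in V$ with $a(w,\varphi) = \langle T,\varphi\rangle$ for all $\varphi \in V$.

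\emph{Recovery of $p$.} The functional $\varphi \mapsto \langle T,\varphi\rangle - a(w,\varphi)$ vanishes on divergence-free $\varphi \in [\mathcal{D}(\mathbb{R}^3\setminus\overline{\mathcal{O}})]^3$; since $\mathbb{R}^3\setminus\overline{\mathcal{O}}$ is connected (the sets $Q_k\setminus\overline{\mathcal{O}_k}$ are connected by \textbf{(A1)} and adjacent cells share a face disjoint from $\mathcal{O}$), De Rham's theorem gives a distribution $p$, unique up to an additive constant, with $-\Delta w + \nabla p = T$ in $\mathcal{D}'(\mathbb{R}^3\setminus\overline{\mathcal{O}})$. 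Restricting to any bounded Lipschitz $\omega \subset\subset \mathbb{R}^3\setminus\overline{\mathcal{O}}$ and invoking the Ne\v{c}as inequality on $\omega$ shows $p - \langle p\rangle_\omega \in L^2(\omega)$, so $p \in L^2_{\mathrm{loc}}(\mathbb{R}^3\setminus\overline{\mathcal{O}})$; and $\nabla p = T + \Delta w \in [H^{-1}(\mathbb{R}^3\setminus\overline{\mathcal{O}})]^3$ because $\langle \Delta w,\varphi\rangle = -a(w,\varphi)$ with $\nabla w \in L^2$.

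\emph{The pressure estimate.} Fix $R>0$ and put $q := p - \lambda^R$, so $\int_{\Omega^R} q = 0$. Because $\Omega^R = \tfrac{1}{\varepsilon}\Omega_\varepsilon$ for $\varepsilon = 1/R$ and the inf--sup constant of $\mathrm{div} : [H^1_0]^3 \to L^2/\mathbb{R}$ is invariant under dilations, the $C\varepsilon^{-1}$ inf--sup inequality of Lemma \ref{lem:estimpres} becomes, on $\Omega^R$, the bound $\|q\|_{L^2(\Omega^R)} \le C R\,\|\nabla q\|_{[H^{-1}(\Omega^R)]^3}$ with $C$ independent of $R$. Then $\nabla q = \nabla p = T + \Delta w$, and bounding $\|T\|_{H^{-1}(\Omega^R)} \le \|T\|_{H^{-1}(\mathbb{R}^3\setminus\overline{\mathcal{O}})}$ (extend test functions by zero) and $\|\Delta w\|_{H^{-1}(\Omega^R)} \le \|\nabla w\|_{L^2(\Omega^R)} \le \|\nabla w\|_{L^2(\mathbb{R}^3\setminus\overline{\mathcal{O}})}$ yields \eqref{eq:estimationpression}. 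The one substantial point is this last step together with the fact, encoded in Lemma \ref{lem:estimpres}, that the inf--sup constant for $\mathrm{div}$ on $\Omega_\varepsilon$ is $O(\varepsilon^{-1})$ (equivalently $O(R)$ on $\Omega^R$) rather than $O(1)$: this is the perforated-domain analogue of Tartar's construction, proved by splitting a mean-zero $g$ into its cell averages plus a cell-wise mean-zero remainder, solving a divergence problem in each $Q_k\setminus\overline{\mathcal{O}_k}$ with a constant uniform in $k$ thanks to \textbf{(A4)$_0$}, and resolving the piecewise-constant part by a Bogovskii-type solve on the macroscopic domain, the loss of the factor $R$ coming from having to push the corrector out of the unit-sized holes spread through a region of diameter $\sim R$. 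A minor care point is the use of De Rham on the unbounded set $\mathbb{R}^3\setminus\overline{\mathcal{O}}$, handled by the localization in the second step.
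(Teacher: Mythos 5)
Your proposal is correct and follows the same route as the paper: Lax--Milgram on the divergence-free subspace of $[H^1_0(\mathbb{R}^3\setminus\overline{\mathcal{O}})]^3$ using the Poincar\'e inequality from the perforated structure, recovery of the pressure via a De Rham-type argument (the paper cites Amrouche--Girault, you invoke De Rham plus Ne\v{c}as locally -- same content), and then the estimate from the scaled Ne\v{c}as/divergence inequality (Lemma \ref{lem4}, or equivalently Lemma \ref{lem:estimpres} after scaling) applied to $\nabla p = T + \Delta w$ together with the restriction inequalities $\|T\|_{H^{-1}(\Omega^R)}\le\|T\|_{H^{-1}(\mathbb{R}^3\setminus\overline{\mathcal{O}})}$ and $\|\Delta w\|_{H^{-1}(\Omega^R)}\le\|\nabla w\|_{L^2}$. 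No gap; the parenthetical sketch of why the inf--sup constant degrades like $R$ correctly summarizes the content of Lemma \ref{lem2} / \ref{lem4}, which the paper simply cites.
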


\begin{proof}
We consider the space 
$H := \{ v \in \left[H^1_0(\mathbb{R}^3 \setminus \overline{\mathcal{O}})\right]^3, \ \mathrm{div}(v) = 0 \}.$ This a Hilbert space as a closed subspace of $\left[ H^1_0(\mathbb{R}^3 \setminus \overline{\mathcal{O}}) \right]^3$. We formulate the following variational problem: find $w \in H$ such that
\begin{equation}
\forall v \in H, \ \ \int_{\mathbb{R}^3 \backslash \overline{\mathcal{O}}} \nabla w : \nabla v = \langle T,v \rangle.
\label{eq:preuvelemme7}
\end{equation}
We recall (see \cite[Proof of Lemma 3.2]{BW}) that we dispose of a Poincar\'e inequality on $\left[ H^1_0(\mathbb{R}^3 \setminus \overline{\mathcal{O}}) \right]^3$ and thus of a Poincar\'e inequality on $H$. We can apply Lax Milgram's Lemma and find a solution $w \in H$ of~\eqref{eq:preuvelemme7}. In particular, for each vector valued function $v \in \left[\mathcal{D}(\mathbb{R}^3 \setminus \overline{\mathcal{O}})\right]^3$ such that $\mathrm{div}(v) = 0$, we have
$$\langle \Delta w + T,v \rangle = 0.$$
Using \cite[Theorem 2.1]{amrouche1994decomposition}, this implies that there exists a distribution $p \in \mathcal{D}'(\mathbb{R}^3 \setminus \overline{ \mathcal{O}})$ such that $\Delta w + T = \nabla p.$
In particular, $\nabla p \in \left[H^{-1}(\mathbb{R}^3 \setminus \overline{\mathcal{O}})\right]^3$. 

Now, we fix $R > 0$. Since $\nabla p \in \left[ H^{-1}(\mathbb{R}^3 \setminus \overline{\mathcal{O}}) \right]^3$, we have $\nabla p \in \left[H^{-1}(\Omega^R) \right]^3$ and 
$$\| \nabla p \|_{\left[ H^{-1}(\Omega^R) \right]^3} \leq \| \nabla p \|_{\left[ H^{-1}(\mathbb{R}^3 \setminus \overline{\mathcal{O}}) \right]^3} \leq \| \nabla w \|_{\left[ L^2(\mathbb{R}^3 \setminus \overline{\mathcal{O}})\right]^{3 \times 3}} + \| T \|_{\left[ H^{-1}(\mathbb{R}^3 \setminus \overline{\mathcal{O}})\right]^3}$$ thanks to the triangle inequality. Lemma \ref{lem4} for $q=2$ furnishes the estimate \eqref{eq:estimationpression}. \\
\end{proof}

\begin{proof}
[Proof of Theorem \ref{th:cor}] We fix $j \in \{1,2,3\}$. Lemma \ref{lem:fonctionaux} gives a function $\phi_j \in \left[ H^1(\mathbb{R}^3 \backslash \overline{\mathcal{O}}) \right]^3$ such that $\phi_j = w_j^{\mathrm{per}}$ on $\partial \mathcal{O}$. The problem
$$
\begin{cases}
\begin{aligned}
\mathrm{div}(\widetilde{v}_j) & = \mathrm{div}(\phi_j) \quad \mathrm{in} \quad \mathbb{R}^3 \setminus \overline{\mathcal{O}} \\
\widetilde{v}_{j} & = 0 \quad \mathrm{on} \quad \partial \mathcal{O}
\end{aligned}
\end{cases}
$$
admits a solution $\widetilde{v}_j \in \left[H^1(\mathbb{R}^3 \setminus \overline{\mathcal{O}}) \right]^3$ thanks to Lemma \ref{lem:divdiv}. Indeed, we just have to check that
$$\forall k \in \mathbb{Z}^3, \ \int_{\partial \mathcal{O}_k} \phi_j \cdot n = \int_{\partial \mathcal{O}_k} w_j^{\mathrm{per}} \cdot n = \int_{\mathcal{O}_k} \mathrm{div}(w_j^{\mathrm{per}}) = 0.$$ Defining $v_j :=  \widetilde{v_j} - \phi_j$ yields a solution to the problem $$
\begin{cases}
\begin{aligned}
\mathrm{div}(v_j) & = 0 \quad \mathrm{in} \quad \mathbb{R}^3 \setminus \overline{\mathcal{O}} \\
v_{j} & = - w_j^{\mathrm{per}} \quad \mathrm{on} \quad \partial \mathcal{O}.
\end{aligned}
\end{cases}$$ By Lemma \ref{lem:stokes}, since $\Delta v_j \in \left[ H^{-1}(\mathbb{R}^d \setminus \mathcal{O}) \right]^3$, there exists a pair $(\widehat{v_j},\widehat{p_j}) \in \left[ H^1_0(\mathbb{R}^3 \setminus \overline{\mathcal{O}})\right]^3 \times L^2_{\mathrm{loc}}(\mathbb{R}^3 \setminus \overline{\mathcal{O}})$ solution of the Problem
\begin{equation}
    \begin{cases}
    \begin{aligned}
    - \Delta \widehat{v}_j + \nabla \widehat{p}_j & = T_j + \Delta v_j  \quad \mathrm{in} \quad \mathbb{R}^3 \setminus \overline{\mathcal{O}} \\
    \mathrm{div}(\widehat{v}_j) & = 0  \\
    \widehat{v}_{j} & = 0 \quad \mathrm{on} \quad \partial \mathcal{O} .
    \end{aligned}
    \end{cases}
\end{equation}
We set $\widetilde{w_j} := \widehat{v_j} + v_j$ and $\widetilde{p_j} = \widehat{p_j}$ and we finish the proof of Theorem \ref{th:cor}.
\end{proof}

%Preuve des résultats 2 et 2' \\

%\underline{Unicité de $\widetilde{w}$?

\subsection{Proof of Theorem \ref{th:convergence} }
\label{sect:proofcvth}

\subsubsection{Strategy of the proof}  

We introduce 
$$\mathcal{R}_{\varepsilon} := u_{\varepsilon} - \varepsilon^2 \sum_{j=1}^3 w_j \left( \frac{\cdot}{\varepsilon} \right) f_j \quad \mathrm{and} \quad \mathcal{P}_{\varepsilon} := p_{\varepsilon} - \varepsilon  \sum_{j=1}^3 p_j \left( \frac{\cdot}{\varepsilon} \right) f_j.$$ The strategy of the proof is to find a Stokes system satisfied by $(R_{\varepsilon},\pi_{\varepsilon})$ and then to apply Theorem~\ref{th:masmoudi}. We need to compute the quantities 
\begin{equation}
- \Delta \mathcal{R}_{\varepsilon} + \nabla \mathcal{P}_{\varepsilon} \quad \mathrm{and} \quad \mathrm{div} (\mathcal{R}_{\varepsilon}).
\label{eq:comput}
\end{equation}
The construction of auxiliary functions is necessary to correct the divergence equation satisfied by $\mathcal{R}_{\varepsilon}$, which doesn't have a suitable order in $\varepsilon$. This is done in subsection \ref{subsect:auxfunc} below (Lemma \ref{lem:cor2h2pasper}). The proof of Theorem \ref{th:convergence} is completed in subsection \ref{subsect:proof}, in particular the computations \eqref{eq:comput}.

\subsubsection{Some auxiliary functions}
\label{subsect:auxfunc}

We recall that the correctors $w_j$, $j \in \{1,2,3\}$ constructed in Theorem \ref{th:cor} are extended by zero in the non-periodic perforations. If $i \in \{1,2,3\}$, we denote $w_j^i := w_j \cdot e_i$ the $i^{\mathrm{th}}-$component of $w_j$. Similarly, $w_j^{i,\mathrm{per}}$ (resp. $\widetilde{w_j^i}$) will be the $i^{\mathrm{th}}-$component of $w_j^{\mathrm{per}}$ (resp. $\widetilde{w_j^i}$). We recall that the definition of the matrix $A$ is given in Equation  \eqref{eq:darcyA}.

\begin{lemme} Suppose that Assumption \textbf{(A4)$_1$} is satisfied.
Let $i,j \in \{1,2,3\}$ and $\chi$ be a function of class $\mathcal{C}^{\infty}$ with support in $Q \setminus \overline{Q}'$ such that $\int_Q \chi = 1$ where $Q'$ is defined in \eqref{eq:Q''} (see also Figure \ref{fig5}). We extend $\chi$ by periodicity to $\mathbb{R}^3 \setminus \overline{\mathcal{O}}$. The problem
\begin{equation}
\begin{cases}
\begin{aligned}
- \mathrm{div} z_j^i & = w_j^i -  \chi A_j^i \quad \mathrm{in} \quad \mathbb{R}^3 \setminus \overline{\mathcal{O}} \\
z_j^i & = 0 \quad \mathrm{on} \quad \partial \mathcal{O}
\end{aligned}
\end{cases}
\label{eq:35}
\end{equation}
admits a solution $z_j^i \in \left[ H^2_{0,\mathrm{loc}}(\mathbb{R}^3 \setminus \overline{\mathcal{O}}) \right]^3$. If we still denote $z_j^i$ the extension of $z_j^i$ by 0 in the perforations, we have the estimate
\begin{equation}
\| z_j^i \|_{\left[ H^2(\Omega/\varepsilon) \right]^3} \leq C \varepsilon^{-\frac{3}{2}} \| w_j^{i,\mathrm{per}} \|_{\left[ H^1(Q) \right]^3} + C \varepsilon^{-1}  \| \widetilde{w_j^i} \|_{\left[ H^1(\mathbb{R}^3)\right]^3}
\label{eq:36}
\end{equation}
for all $\varepsilon > 0$
where $C$ is a constant independent of $\varepsilon$.
\label{lem:cor2h2pasper}
\end{lemme}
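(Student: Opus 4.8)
The plan is to solve \eqref{eq:35} by superposing solutions of a ``periodic part'' and a ``defect part'', following the decomposition
\[
w_j^i - \chi A_j^i = \big(w_j^{i,\mathrm{per}} - \chi A_j^i\big) + \widetilde{w_j^i} \qquad \text{in } \mathbb{R}^3 \setminus \overline{\mathcal{O}}.
\]
Two elementary observations are used throughout. First, since $\int_Q \chi = 1$ and $\int_{Q \setminus \overline{\mathcal{O}_0^{\mathrm{per}}}} w_j^{i,\mathrm{per}} = A_j^i$ by \eqref{eq:darcyA}, the $Q$-periodic field $w_j^{i,\mathrm{per}} - \chi A_j^i$ has zero average on every cell and vanishes on $\partial \mathcal{O}^{\mathrm{per}}$. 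Second, $\widetilde{w_j^i} \in H^1(\mathbb{R}^3)$ (extended to $\mathbb{R}^3$ as in Theorem \ref{th:cor}, so that $\widetilde{w_j^i} = -w_j^{i,\mathrm{per}}$ on $\mathcal{O}$), and a short computation exploiting this identity gives $\int_{Q_k \setminus \overline{\mathcal{O}_k}} (w_j^i - \chi A_j^i) = \int_{Q_k} \widetilde{w_j^i}$ for every $k$.

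For the \emph{periodic part}, I would first construct a fixed ($\varepsilon$-independent) $Q$-periodic field $\zeta_j^i$ with $-\mathrm{div}\, \zeta_j^i = w_j^{i,\mathrm{per}} - \chi A_j^i$ in $\mathbb{R}^3 \setminus \overline{\mathcal{O}^{\mathrm{per}}}$, vanishing \emph{to second order} on $\partial \mathcal{O}^{\mathrm{per}}$, with $\| \zeta_j^i \|_{[H^2(Q)]^3} \leq C \| w_j^{i,\mathrm{per}} \|_{[H^1(Q)]^3}$. This is done by solving the periodic divergence problem on $Q \setminus \overline{\mathcal{O}_0^{\mathrm{per}}}$ (well posed by the first observation, with $H^2$ regularity since the datum is $H^1$ and $\mathcal{O}_0^{\mathrm{per}}$ is $\mathcal{C}^{2,\alpha}$, and with the solution vanishing on $\partial \mathcal{O}_0^{\mathrm{per}}$), then adding a divergence-free correction killing the normal derivative on $\partial \mathcal{O}_0^{\mathrm{per}}$, obtained from a collar extension of the relevant trace and Assumption \textbf{(A4)$_1$} applied on $Q_0 \setminus \overline{\mathcal{O}_0^{\mathrm{per}}}$ (the compatibility condition holds because the periodic solution vanishes on $\partial \mathcal{O}_0^{\mathrm{per}}$). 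Extending $\zeta_j^i$ by zero into $\mathcal{O}^{\mathrm{per}}$ gives a $Q$-periodic element of $H^2_{\mathrm{loc}}(\mathbb{R}^3)$, hence $\| \zeta_j^i \|_{[H^2(\Omega/\varepsilon)]^3}^2 \leq C\,\#\{k : Q_k \subset \Omega/\varepsilon\}\,\| \zeta_j^i \|_{[H^2(Q)]^3}^2 \leq C \varepsilon^{-3} \| w_j^{i,\mathrm{per}} \|_{[H^1(Q)]^3}^2$, which produces the first term of \eqref{eq:36}. Finally $\zeta_j^i$ is transferred from $\mathcal{O}^{\mathrm{per}}$ to $\mathcal{O}$: restricted to $\mathbb{R}^3 \setminus \overline{\mathcal{O}}$ it already has the right divergence, so one only adds a divergence-free field making it vanish to second order on $\partial \mathcal{O}$, built cell by cell (again via \textbf{(A4)$_1$}) together with a redistribution of the residual fluxes through the $\partial \mathcal{O}_k$; since $\zeta_j^i$ vanishes to second order on $\partial \mathcal{O}^{\mathrm{per}}$ and $\sum_k |\mathcal{O}_k \Delta \mathcal{O}_k^{\mathrm{per}}| < +\infty$ by \eqref{eq:diffsym}, this correction is bounded in $[H^2(\Omega/\varepsilon)]^3$ by a fixed constant, absorbed into $C\varepsilon^{-3/2}\|w_j^{i,\mathrm{per}}\|_{[H^1(Q)]^3}$.

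For the \emph{defect part} I would solve $-\mathrm{div}\, Z_0 = \widetilde{w_j^i}$ globally in $\mathbb{R}^3$ via the Newtonian potential, $Z_0 := -\nabla \Delta^{-1} \widetilde{w_j^i}$. Calder\'on--Zygmund theory gives $\| \nabla Z_0 \|_{L^2(\mathbb{R}^3)} \leq C\| \widetilde{w_j^i} \|_{L^2(\mathbb{R}^3)}$ and $\| D^2 Z_0 \|_{L^2(\mathbb{R}^3)} \leq C\| \nabla \widetilde{w_j^i} \|_{L^2(\mathbb{R}^3)}$, while $\dot H^1(\mathbb{R}^3) \hookrightarrow L^6(\mathbb{R}^3)$ gives $\| Z_0 \|_{L^6(\mathbb{R}^3)} \leq C\| \widetilde{w_j^i} \|_{L^2(\mathbb{R}^3)}$; since $\Omega/\varepsilon$ has diameter $\sim \varepsilon^{-1}$, H\"older's inequality yields $\| Z_0 \|_{L^2(\Omega/\varepsilon)} \leq C\varepsilon^{-1} \| \widetilde{w_j^i} \|_{L^2(\mathbb{R}^3)}$, so $\| Z_0 \|_{[H^2(\Omega/\varepsilon)]^3} \leq C\varepsilon^{-1} \| \widetilde{w_j^i} \|_{[H^1(\mathbb{R}^3)]^3}$ --- this is the origin of the exponent $-1$. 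Then $Z_0$ is corrected so that the sum vanishes to second order on $\partial \mathcal{O}$: on each cell one subtracts $\big(\int_{Q_k}\widetilde{w_j^i}\big)\theta_k$ ($\theta$ a fixed bump, $\int \theta = 1$, supported away from the holes and periodically placed) to remove the cell-average obstruction, at a cost $\leq C\| \widetilde{w_j^i} \|_{L^2(\mathbb{R}^3)}$ in the $\ell^2$ sum over cells, and one adds a divergence-free field matching $-Z_0$ to second order on $\partial \mathcal{O}$, built cell by cell from a collar extension and \textbf{(A4)$_1$}, the residual per-cell fluxes (again summable by \eqref{eq:diffsym}) being routed as before. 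All these corrections are then controlled by $C\varepsilon^{-1}\|\widetilde{w_j^i}\|_{[H^1(\mathbb{R}^3)]^3}$ (the fixed flux-routing contribution being absorbed into the periodic term).

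Setting $z_j^i$ equal to the sum of the two solutions gives a field in $[H^2_{0,\mathrm{loc}}(\mathbb{R}^3 \setminus \overline{\mathcal{O}})]^3$ solving \eqref{eq:35}, and adding the two bounds gives \eqref{eq:36}. I expect the delicate point to be the handling of the mismatch between $\mathcal{O}$ and $\mathcal{O}^{\mathrm{per}}$: checking that every cell-wise divergence correction satisfies its compatibility condition, that the constants furnished by \textbf{(A4)$_1$} are uniform in $k$ (which is exactly what \textbf{(A4)$_1$} asserts), and that the residual fluxes assemble into a field whose $H^2$-norm on $\Omega/\varepsilon$ has the claimed size --- in other words, the bookkeeping that keeps the periodic contribution at order $\varepsilon^{-3/2}$ and the defect contribution at order $\varepsilon^{-1}$.
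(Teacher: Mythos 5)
Your plan has the right spirit and, for the defect part, is essentially identical to the paper's: the field $Z_0 = -\nabla\Delta^{-1}\widetilde{w_j^i}$ is exactly the paper's $\nabla\widetilde{\Psi_j^i}$, and the Calder\'on--Zygmund bounds together with $\dot H^1(\mathbb{R}^3)\hookrightarrow L^6(\mathbb{R}^3)$ and H\"older on $\Omega/\varepsilon$ produce the exponent $-1$ just as you say. But the paper does \emph{not} split the divergence data into a periodic piece and a defect piece and solve two separate boundary-corrected divergence problems; it writes $z_j^i = \nabla\Psi_j^i + g_j^i$ where $\Psi_j^i$ solves the scalar Poisson equation $-\Delta\Psi_j^i = w_j^i - \chi A_j^i$ on all of $\mathbb{R}^3$ (with $\Psi_j^i = \Psi_j^{i,\mathrm{per}} + \widetilde{\Psi_j^i}$ used \emph{only} to split the final estimate into the $\varepsilon^{-3/2}$ and $\varepsilon^{-1}$ contributions), and $g_j^i$ is a cell-by-cell divergence-free correction of the boundary value $-\nabla\Psi_j^i$ on $\partial\mathcal{O}_k$, obtained from \textbf{(A4)$_1$} after cutting off with $\chi_1^k$. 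The crucial payoff of keeping $\Psi$ together before correcting is that $w_j^i - \chi A_j^i = 0$ inside every $\mathcal{O}_k$ (the corrector is extended by zero and $\chi$ is supported in $Q\setminus\overline{Q'}$), hence $\Delta\Psi_j^i = 0$ in $\mathcal{O}_k$ and $\int_{\partial\mathcal{O}_k}\nabla\Psi_j^i\cdot n = 0$ \emph{automatically}: the cell-wise compatibility condition for $g_j^{i,k}$ is free, and no inter-cell flux routing of any kind is needed.

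This is exactly where your proposal is most fragile, and you flagged it yourself. If you correct $\zeta_j^i$ and $Z_0$ separately, each carries a nonzero flux through $\partial\mathcal{O}_k$, namely $\int_{\partial\mathcal{O}_k}\zeta_j^i\cdot n = -\int_{\mathcal{O}_k}w_j^{i,\mathrm{per}}$ and $\int_{\partial\mathcal{O}_k}Z_0\cdot n = +\int_{\mathcal{O}_k}w_j^{i,\mathrm{per}}$ (nonzero unless $\mathcal{O}_k\subset\mathcal{O}_k^{\mathrm{per}}$). These cancel in the sum — which is no coincidence, it is just $\Delta\Psi = 0$ in $\mathcal{O}_k$ in disguise — but each of your two constructions separately fails the compatibility condition of \textbf{(A4)$_1$}. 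Patching this requires moving flux between cells, which cannot be done with $H^2_0(Q_k\setminus\overline{\mathcal{O}_k})$ building blocks; you would have to transport flux across $\partial Q_k$, which is outside what the cell-wise divergence hypothesis gives you, and then re-establish the $\varepsilon^{-3/2}$/$\varepsilon^{-1}$ size separation. Also, in your periodic step, a solution of the periodic divergence problem vanishing on $\partial\mathcal{O}_0^{\mathrm{per}}$ only extends to $H^1$, not $H^2$; killing the normal derivative needs more than a "divergence-free correction", it is again a full trace-matching problem. Both difficulties disappear in the paper's formulation: I recommend adopting the scalar-potential form $z = \nabla\Psi + g$ and correcting once, not twice.
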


\begin{proof} We fix $i,j \in \{1,2,3\}$.
We search $z_j^i$ under the form $z_j^i = \nabla \Psi_j^i + g_j^i$.

\textbf{Step 1}. We build a function $\Psi_j^i$ such that $\nabla \Psi_j^i \in \left[ H^{2,\mathrm{per}}(Q) \right]^3 + \left[ H^{2}_{\mathrm{loc}}(\mathbb{R}^3) \right]^3$ and
$$- \Delta \Psi_j^i = w_j^i - \chi A_j^i \quad \mathrm{on} \quad \mathbb{R}^3.$$
The periodic part of $\Psi_j^i$ is defined by solving the problem
\begin{equation}
\begin{cases}
\begin{aligned}
-\Delta \Psi_j^{i,\mathrm{per}} & = w_j^{i,\mathrm{per}} - \chi A_j^i \quad \mathrm{on} \quad Q \\
\Psi_j^{i,\mathrm{per}} & \in H^{1,\mathrm{per}}(Q).
\end{aligned}
\end{cases}
\label{eq:27}
\end{equation}
Since $\int_Q \big( w_j^{i,\mathrm{per}} - \chi A_j^i \big) = 0$, Problem \eqref{eq:27} is well posed in $H^{1,\mathrm{per}}(Q)/\mathbb{R}$. We choose $\Psi_j^{i,\mathrm{per}}$ such that $\int_Q \Psi_j^{i,\mathrm{per}} = 0$. Because $w_j^{i,\mathrm{per}} - \chi A_j^i \in H^{1,\mathrm{per}}(Q)$, standard elliptic regularity results state that $\nabla \Psi_j^{i,\mathrm{per}} \in \left[ H^{2,\mathrm{per}}(Q) \right]^3$. Besides, there exists a constant $C$ such that
\begin{equation}
\| \nabla \Psi_j^{i,\mathrm{per}} \|_{\left[ H^2(Q) \right]^3} \leq C \| w_j^{i,\mathrm{per}} - \chi A_j^i \|_{\left[ H^1(Q) \right]^3} \leq C \| w_j^{i,\mathrm{per}} \|_{\left[ H^1(Q) \right]^3}.
\label{eq:3.23}
\end{equation}
We now build the non-periodic part of $\Psi_j^i$.
We extend $\widetilde{w_j^i}$ by
 $- w_j^{i,\mathrm{per}}$ in $\mathcal{O}$. We note that, with this extension, $\widetilde{w_j^i} \in\left[H^1(\mathbb{R}^3) \right]^3$. We consider the problem
$$- \Delta \widetilde{\Psi_j^i} = \widetilde{w_j^i} \quad \mathrm{on} \quad \mathbb{R}^3, \quad \widetilde{\Psi_i^j} \underset{|x| \rightarrow +\infty}{\longrightarrow} 0.$$ 
 The solution is given by the Green function: $$\widetilde{\Psi_j^i} = C_3 \frac{1}{|\cdot|} \underset{\mathbb{R}^3}{*} \widetilde{w_j^i}.$$
Thanks to the remarks after the proof of \cite[Theorem 9.9]{gilbarg2015elliptic} (see \cite[p.235]{gilbarg2015elliptic}), we have that $D^2 \widetilde{\Psi_j^i} \in \left[ H^1(\mathbb{R}^3) \right]^{3 \times 3}$ and
\begin{equation}
\| D^2 \widetilde{\Psi_j^i} \|_{\left[L^2(\mathbb{R}^3)\right]^{3 \times 3}} = \| \widetilde{w_j^i} \|_{L^2(\mathbb{R}^3)} \ \ \ \mathrm{and} \ \ \ \| D^3 \widetilde{\Psi_j^i} \|_{\left[L^2(\mathbb{R}^3)^3\right]^{3 \times 3}} = \|\nabla \widetilde{w_j^i} \|_{\left[ L^2(\mathbb{R}^3) \right]^3}.
\label{eq:28}
\end{equation}
%In particular,
%$$\|D^2 \widetilde{\Psi_j^i} \|_{H^1(\Omega_{\varepsilon}/\varepsilon)} \leq C \| \widetilde{w_j^i} \|_{H^1(\mathbb{R}^3)}.$$
Using the Sobolev injection $ \overset{\cdot}{H^1}(\mathbb{R}^3) \hookrightarrow L^6(\mathbb{R}^3)$ for $\nabla \widetilde{\Psi_j^i}$, we deduce that $\nabla \widetilde{\Psi_j^i} \in \left[ L^6(\mathbb{R}^3) \right]^3$ and, using \eqref{eq:28}, that the estimate $$\| \nabla \widetilde{\Psi_j^i} \|_{\left[ L^6(\mathbb{R}^3) \right]^3} \leq C \| \widetilde{w_j^i} \|_{L^2(\mathbb{R}^3)}$$ holds true. In particular, $\nabla\widetilde{\Psi_j^i} \in \left[ L^{2}_{\mathrm{loc}}(\mathbb{R}^3) \right]^3$ and, thanks to H\"{o}lder inequality, we have
$$\| \nabla \widetilde{\Psi_j^i} \|_{\left[ L^2(\Omega_{\varepsilon}/\varepsilon) \right]^3} \leq \frac{C}{\varepsilon} \| \nabla \widetilde{\Psi_j^i} \|_{\left[ L^6(\mathbb{R}^3) \right]^3}.$$
We deduce that
\begin{equation}
\| \nabla \widetilde{\Psi_j^i} \|_{\left[ L^2(\Omega_{\varepsilon}/\varepsilon) \right]^3} \leq \frac{C}{\varepsilon} \| \nabla \widetilde{\Psi_j^i} \|_{\left[ L^6(\mathbb{R}^3) \right]^3}  \leq \frac{C}{\varepsilon} \| \widetilde{w_j^i} \|_{\left[ L^2(\mathbb{R}^3) \right]^3.}
\label{eq:29}
\end{equation}
Finally, collecting \eqref{eq:28} and \eqref{eq:29}, we get
\begin{equation}
\| \nabla \widetilde{\Psi_j^i} \|_{\left[ H^2(\Omega_{\varepsilon}/\varepsilon) \right]^3} \leq \frac{
C}{\varepsilon} \| \widetilde{w_j^i} \|_{\left[ H^1(\mathbb{R}^3) \right]^3}.
\label{eq:37}
\end{equation}
We define $\Psi_j^i := \Psi_j^{i,\mathrm{per}} + \widetilde{\Psi_j^i}$ and verify that  $$- \Delta \Psi_j^i = w_j^{i,\mathrm{per}} - \chi A_j^i + \widetilde{w_j^i} = w_j^i  - \chi A_j^i \quad \mathrm{on} \quad \mathbb{R}^3.$$
We use the periodicity of $\nabla \Psi_j^{i,\mathrm{per}}$ and write that
\begin{equation}
\begin{aligned}
\| \nabla \Psi_j^i \|_{\left[ H^2(\Omega_{\varepsilon}/\varepsilon) \right]^3} 
& \leq \| \nabla \Psi_j^{i,\mathrm{per}} \|_{ \left[ H^2(\Omega_{\varepsilon}/\varepsilon) \right]^3} + \| \nabla \widetilde{\Psi_j^i} \|_{ \left[ H^2(\Omega_{\varepsilon}/\varepsilon) \right]^3} \\
& \leq C \varepsilon^{-\frac{3}{2}}\| \nabla \Psi_j^{i,\mathrm{per}} \|_{\left[ H^2(Q) \right]^3} + \| \nabla \widetilde{\Psi_j^i} \|_{ \left[ H^2(\Omega_{\varepsilon}/\varepsilon) \right]^3},
\end{aligned}
\label{eq:38}
\end{equation}
where the constant $C$ is independent of $\varepsilon$.
We make use of \eqref{eq:37} and \eqref{eq:3.23} and deduce that
\begin{equation}
\| \nabla \Psi_j^i \|_{\left[ H^2(\Omega_{\varepsilon}/\varepsilon) \right]^3} \leq C \varepsilon^{-\frac{3}{2}} \| w_j^{i,\mathrm{per}} \|_{\left[ H^1(Q) \right]^3} + C \varepsilon^{-1} \| \widetilde{w_j^i} \|_{\left[ H^1(\mathbb{R}^3) \right]^3}.
\label{eq:38bis}
\end{equation}

\textbf{Step 2.} We introduce a cut-off function $\chi_1$ such that $\chi_1 = 1$ in $Q'$ and $\chi_1 = 0$ out of $Q$ (see Figure~\ref{fig5}). We fix $k \in \mathbb{Z}^3$ and define $\chi_1^k := \chi_1(\cdot + k)$. The goal of this step is to solve the following problem:
\begin{equation}
\begin{cases}
\begin{aligned}
\mathrm{div}(g_j^{i,k}) & = 0 \quad \mathrm{in} \quad Q_k \setminus \overline{\mathcal{O}_k} \\
g^{i,k}_{j} & = - \nabla \Psi_j^i \quad \mathrm{on} \quad \partial \mathcal{O}_k \\
g^{i,k}_{j} & = 0 \quad \mathrm{on} \quad \partial Q_k.
\end{aligned}
\end{cases}
\label{eq:39}
\end{equation} 
We first solve 
\begin{equation}
\begin{cases}
\begin{aligned}
\mathrm{div}(h_j^{i,k}) & = \mathrm{div}(\chi_1^k \nabla \Psi_j^i) \quad \mathrm{on} \quad Q_k \setminus \overline{\mathcal{O}_k} \\
h_j^{i,k} & \in \left[ H^2_0(Q_k \backslash \overline{\mathcal{O}_k}) \right]^3.
\end{aligned}
\end{cases}
\label{eq:39bis}
\end{equation}
The compatibility condition \eqref{eq:comp} is satisfied:
$$\int_{Q_k \setminus \overline{\mathcal{O}_k}} \mathrm{div}(\chi_1^k \nabla \Psi_j^i) = \int_{\partial \mathcal{O}_k} \chi_1^k \nabla \Psi_j^i \cdot n + \int_{\partial Q_k} \chi_1^k \nabla \Psi_j^i \cdot n = \int_{\mathcal{O}_k} \Delta \Psi_j^i = 0.$$
Since $\mathrm{div}(\chi_1^k \nabla \Psi_j^i) \in H^1_0(Q_k \setminus \overline{\mathcal{O}_k})$, we obtain by Assumption \textbf{(A4)$_1$} a solution $h_j^{i,k} \in \left[H^2_0(Q_k \setminus \overline{\mathcal{O}_k} )\right]^3$ to \eqref{eq:39bis} which satisfies the estimate
$$\| h_j^{i,k} \|_{\left[ H^2(Q_k \setminus \overline{\mathcal{O}_k}) \right]^3} \leq C \| \mathrm{div}(\chi_1^k \nabla \Psi_j^i) \|_{ H^1(Q_k \setminus \overline{\mathcal{O}_k}) } \leq C \| \nabla \Psi_j^i \|_{\left[ H^2(Q_k \setminus \overline{\mathcal{O}_k}) \right]^3} \leq C \| \nabla \Psi_j^i \|_{\left[ H^2(Q_k) \right]^3}.$$
We extend $h_j^{i,k}$ by 0 to $\mathbb{R}^3 \setminus \overline{\mathcal{O}}$.
We then define $g_j^{i,k} := h_j^{i,k} - \chi_1^k \nabla \Psi_j^i$. We note that $g_j^{i,k} = 0$ out of $Q_k$ and that $g_j^{i,k} \in \left[ H^2(\mathbb{R}^3 \setminus \overline{\mathcal{O}}) \right]^3$. Besides, $g_j^{i,k}$ solves Problem~\eqref{eq:39} and satisfies the estimate
\begin{equation}
\| g_j^{i,k} \|_{\left[ H^2(Q_k \setminus \overline{\mathcal{O}_k}) \right]^3} \leq C \| \nabla \Psi_j^i \|_{\left[H^2(Q_k)\right]^3}.
\label{eq:40}
\end{equation}

\textbf{Step 3}. We set
$$g_j^i(x) :=  g_j^{i,k}(x) \quad \mathrm{if} \quad x \in Q_k.$$
Then we have
$$\begin{cases}
\begin{aligned}
\mathrm{div}(g_j^i) & = 0 \quad \mathrm{in} \quad \mathbb{R}^3 \setminus \overline{\mathcal{O}} \\
g_j^i & = - \nabla \Psi_j^i \quad \mathrm{on} \quad \partial \mathcal{O}.
\end{aligned}
\end{cases}$$
Besides, $g_j^i \in \left[ H^2_{\mathrm{loc}}(\mathbb{R}^3 \setminus \overline{\mathcal{O}}) \right]^3$ and summing \eqref{eq:40} over $k \in Y_{\varepsilon}$ yields the estimate 
\begin{equation}
\| g_j^i \|_{\left[ H^2(\Omega_{\varepsilon}/\varepsilon) \right]^3} \leq C \| \nabla \Psi_j^i \|_{\left[H^2(\Omega_{\varepsilon}/\varepsilon)\right]^3}.
\label{eq:41}
\end{equation}
We define $z_j^i := \nabla \Psi_j^i + g_j^i$. We have $z_j^i \in \left[ H^2_{\mathrm{loc}}(\mathbb{R}^3 \backslash \overline{\mathcal{O}}) \right]^3.$ Besides,
$z_j^i$ is a solution of \eqref{eq:35} and, collecting \eqref{eq:38bis} and \eqref{eq:41}, we prove the estimate \eqref{eq:36}:
\begin{equation}
\| z_j^i \|_{\left[H^2(\Omega_{\varepsilon}/\varepsilon)\right]^3} \leq C \| \nabla \Psi_j^i \|_{\left[H^2(\Omega_{\varepsilon}/\varepsilon)\right]^3} \leq C \varepsilon^{-\frac{3}{2}} \| w_j^{i,\mathrm{per}} \|_{\left[H^1(Q)\right]^3} + C \varepsilon^{-1}  \| \widetilde{w_j^i} \|_{\left[H^1(\mathbb{R}^3)\right]^3}.
\end{equation}

It remains to prove that $z_j^i \in \left[ H^2_{0}(\mathbb{R}^3 \backslash \mathcal{O}) \right]^3.$ For that, we fix $k \in \mathbb{Z}^3$ and we notice that in a neighbourhood of the perforation $\partial \mathcal{O}_k$, the equality $z_i^j = h_j^{i,k} + (1 - \chi_1^k) \nabla \Psi_j^i = h_j^{i,k}$ is satsified. Since $h_j^{i,k} \in \left[ H^2_0(Q_k \backslash \overline{\mathcal{O}_k}) \right]^3$, it proves that 
$z_j^i \in \left[ H^2_{0,\mathrm{loc}}(\mathbb{R}^3 \backslash \mathcal{O}) \right]^3.$
This ends the proof.
\end{proof}

\subsubsection{Proof of convergence Theorem \ref{th:convergence}}
\label{subsect:proof}

\begin{proof}

\begin{figure}[h!]

\centering
\begin{tikzpicture}[scale=.12]\footnotesize
 \pgfmathsetmacro{\xone}{-21}
 \pgfmathsetmacro{\xtwo}{21}
 \pgfmathsetmacro{\yone}{-21}
 \pgfmathsetmacro{\ytwo}{21}
 
\fill[gray!20] (0,0) ellipse (19cm and 16cm);
\begin{scope}<+->;
  \draw[step=5cm,gray,very thin] (\xone,\yone) grid (\xtwo,\ytwo);
\end{scope}

\draw[<->] (-20,-20.5) -- (-15,-20.5);
\draw (-17.5,-22) node[]{$\varepsilon$};
\draw[<->] (-20.5,-20) -- (-20.5,-15);
\draw (-22,-17.5) node[]{$\varepsilon$};

\fill[gray!100] (2.5,2.5) ellipse (0.9cm and 1.2cm);
\fill[gray!100] (7,2) ellipse (1cm and 1.2cm);
\fill[gray!100] (13,3) ellipse (1.2cm and 1.2cm);
\fill[gray!100] (-2,3) ellipse (1.2cm and 1cm);
\fill[gray!100] (-8,2.5) ellipse (1.4cm and 1.2cm);
\fill[gray!100] (-13,1.8) ellipse (1cm and 1.2cm);

\fill[gray!100] (-12,-2) ellipse (0.9cm and 1.2cm);
\fill[gray!100] (-8,-2) ellipse (1.2cm and 1.2cm);
\fill[gray!100] (-3,-3) ellipse (1.2cm and 1cm);
\fill[gray!100] (2,-2.5) ellipse (1.4cm and 1.2cm);
\fill[gray!100] (7,-1.8) ellipse (1cm and 1.2cm);
\fill[gray!100] (13,-3) ellipse (1.2cm and 1.2cm);

\fill[gray!100] (-13,-8) ellipse (1cm and 1.2cm);
\fill[gray!100] (-7,-8) ellipse (0.7cm and 1.2cm);
\fill[gray!100] (-3,-8) ellipse (1cm and 1cm);
\fill[gray!100] (1.5,-7) ellipse (1cm and 1cm);
\fill[gray!100] (8,-7) ellipse (1.1cm and 1cm);
\fill[gray!100] (12,-7) ellipse (1.1cm and 1cm);

\fill[gray!100] (1.5,8) ellipse (1cm and 1.2cm);
\fill[gray!100] (7.5,7.5) ellipse (1.2cm and 1.2cm);
\fill[gray!100] (12,7) ellipse (1cm and 1cm);
\fill[gray!100] (-2,8) ellipse (1cm and 1cm);
\fill[gray!100] (-8.5,6.8) ellipse (1.1cm and 1cm);
\fill[gray!100] (-12,7) ellipse (1.1cm and 1cm);

\fill[gray!100] (1.5,-12) ellipse (1.1cm and 1cm);
\fill[gray!100] (-3,-13) ellipse (1.1cm and 1cm);

\fill[gray!100] (1.8,12.2) ellipse (0.7cm and 1.2cm);
\fill[gray!100] (-2.1,12.6) ellipse (1cm and 1cm);

\draw[black,thick] (0,0) ellipse (19cm and 16cm);
\draw[red,thick] (0,0) ellipse (13cm and 8.5cm);
\draw[blue,thick] (5,10)--(15,10)--(15,-10)--(5,-10)--(5,-15)--(-5,-15)--(-5,-10)--(-15,-10)--(-15,10)--(-5,10)--(-5,15)--(5,15)--cycle;
\draw[red,->] (-25,0)--(-13,0);
\draw[red] (-25,0) node[left]{$\Omega'$};

\draw (0,17.5) node[right]{$\Omega_{\varepsilon}$};
\draw[<-,blue] (15,0)--(25,0);
\draw[blue] (26,0) node[right]{$\bigcup_{k \in Y_{\varepsilon}} \varepsilon Q_k$};

\end{tikzpicture}
\caption{Proof of Theorem \ref{th:convergence}}
\label{fig3}
\end{figure}
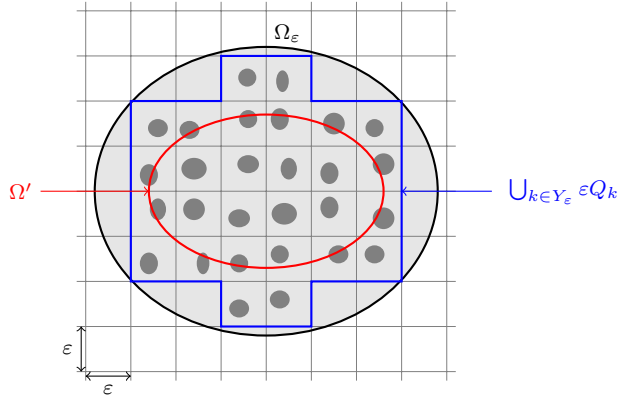

We choose $\varepsilon > 0$ small enough such that
$$\mathrm{supp}(f) \subset \bigcup_{k \in Y_{\varepsilon}} \varepsilon Q_k.$$
We define (see Figure~\ref{fig3}) $\Omega' := \{x \in \Omega \ \text{s.t.} \ f(x) \neq 0 \}$. 
We now set
$$u_{\varepsilon}^1 := \varepsilon^2 w_j \left( \frac{\cdot}{\varepsilon} \right) f_j + \varepsilon^3 z_j^i \left( \frac{\cdot}{\varepsilon} \right) \partial_i f_j$$
and 
$$p_{\varepsilon}^1 := \varepsilon \left[ p_{j} \left( \frac{\cdot}{\varepsilon} \right) - \lambda_{\varepsilon}^j \right] f_j, \ \ \ \lambda_{\varepsilon}^j := \frac{1}{|\frac{1}{\varepsilon}\Omega_{\varepsilon}|} \int_{\frac{1}{\varepsilon} \Omega_{\varepsilon}} p_j.$$
We have $u_{\varepsilon}^1 \in \left[ H^1_0(\Omega_{\varepsilon}) \right]^3$ and $p_{\varepsilon}^1 \in L^2(\Omega_{\varepsilon})$ 
and thus
$$- \Delta u_{\varepsilon}^1 + \nabla p_{\varepsilon}^1 \in \left[ H^{-1}(\Omega_{\varepsilon}) \right]^3.$$
Since (see Figure \ref{fig3}) $f = 0$ in $\Omega \setminus \Omega'$, we have that $u_{\varepsilon}^1$ and $p_{\varepsilon}^1$ are compactly supported in $\Omega$. It is thus sufficient to compute $- \Delta u_{\varepsilon}^1 + \nabla p_{\varepsilon}^1$ in $\Omega_{\varepsilon} \cap \Omega'$. We notice that
$\Omega_{\varepsilon} \cap \Omega' = \Omega' \setminus \varepsilon \overline{\mathcal{O}}.$
Besides, thanks to Lemma \ref{lem:cor2h2pasper}, we have $z_j^i (\cdot/\varepsilon) \in \left[ H^2(\Omega' \setminus \varepsilon \overline{\mathcal{O}}) \right]^3$. We compute in $\Omega' \setminus \varepsilon \overline{\mathcal{O}}$ : 
$$
\begin{aligned}
\Delta u_{\varepsilon}^1 = \Delta w_j \left( \frac{\cdot}{\varepsilon} \right) f_j + 2 \varepsilon \partial_k w_j \left( \frac{\cdot}{\varepsilon} \right) \partial_k f_j + & \varepsilon^2 w_j \left(\frac{\cdot}{\varepsilon} \right) \Delta f_j + \varepsilon\Delta z_j^i \left( \frac{\cdot}{\varepsilon} \right)\partial_i f_j \\ & + 2 \varepsilon^2 \partial_k z_j^i \left( \frac{\cdot}{\varepsilon} \right) \partial_k \partial_i f_j + \varepsilon^3 z_j^i \left(\frac{\cdot}{\varepsilon} \right) \Delta \partial_i f_j.
\end{aligned}$$ 
and 
$$\nabla p_{\varepsilon}^1 = \nabla p_j \left( \frac{\cdot}{\varepsilon} \right) f_j + \varepsilon \left\{ p_j \left( \frac{\cdot}{\varepsilon} \right) - \lambda_{\varepsilon}^j \right\} \nabla f_j.$$
Thus, 
\begin{equation}
\begin{aligned}
 \Delta u_{\varepsilon}^1 - \nabla p_{\varepsilon}^1 & = \Delta w_j \left( \frac{\cdot}{\varepsilon} \right) f_j + 2\varepsilon \partial_k w_j\left( \frac{\cdot}{\varepsilon} \right) \partial_k f_j + \varepsilon^2 w_j \left(\frac{\cdot}{\varepsilon} \right) \Delta f_j + \varepsilon \Delta z_j^i \left( \frac{\cdot}{\varepsilon} \right) \partial_i f_j \\
& + 2\varepsilon^2 \partial_k z_j^i\left( \frac{\cdot}{\varepsilon} \right) \partial_k \partial_i f_j + \varepsilon^3 z_j^i \left(\frac{\cdot}{\varepsilon} \right) \Delta \partial_i f_j - \nabla p_j \left(\frac{\cdot}{\varepsilon}\right) f_j - \varepsilon \left\{ p_j \left(\frac{\cdot}{\varepsilon} \right) - \lambda_{\varepsilon}^j \right\} \nabla f_j \\
& = - f_je_j + \varepsilon f_{\varepsilon} = - f + \varepsilon f_{\varepsilon},
\end{aligned}
\label{eq:thcvcalcul43}
\end{equation}
where
$$
\begin{aligned}
f_{\varepsilon} & := 2 \partial_k w_j\left( \frac{\cdot}{\varepsilon} \right) \partial_k f_j + \varepsilon w_j \left(\frac{\cdot}{\varepsilon} \right) \Delta f_j
+  \Delta z_j^i \left( \frac{\cdot}{\varepsilon} \right) \partial_i f_j + 2\varepsilon \partial_k z_j^i\left( \frac{\cdot}{\varepsilon} \right) \partial_k \partial_i f_j \\
& + \varepsilon^2 z_j^i \left(\frac{\cdot}{\varepsilon} \right) \Delta \partial_i f_j - \left\{ p_j \left(\frac{\cdot}{\varepsilon} \right) - \lambda_{\varepsilon}^j \right\} \nabla f_j.
\end{aligned}$$
Equation \eqref{eq:thcvcalcul43} is still valid in $\Omega_{\varepsilon} \setminus \Omega'$ (the LHS and RHS vanish). We define
$$R_{\varepsilon} := u_{\varepsilon} - u_{\varepsilon}^1 \ \ \ \mathrm{and} \ \ \ \pi_{\varepsilon} := p_{\varepsilon} - p_{\varepsilon}^1.$$
Thus $(R_{\varepsilon},\pi_{\varepsilon}) \in \left[ H^1_0(\Omega_{\varepsilon}) \right]^3 \times L^2(\Omega_{\varepsilon})$ and
$$- \Delta R_{\varepsilon} + \nabla \pi_{\varepsilon} = \varepsilon f_{\varepsilon} \quad \mathrm{in} \quad \Omega_{\varepsilon}, \quad f_{\varepsilon}  \in \left[ L^2(\Omega_{\varepsilon}) \right]^3.$$
Using that $f \in \left[ W^{3,\infty}(\Omega) \right]^3$, we infer
\begin{equation}
\begin{aligned}
\| f_{\varepsilon} \|_{\left[ L^2(\Omega_{\varepsilon})\right]^3} & \leq \left\| \nabla w_j \left( \frac{\cdot}{\varepsilon} \right) \right\|_{\left[L^2(\Omega' \setminus \varepsilon \overline{\mathcal{O}})\right]^{3 \times 3}} \| \nabla f_j \|_{\left[L^{\infty}(\Omega) \right]^3} + \varepsilon\left\| w_j \left( \frac{\cdot}{\varepsilon} \right) \right\|_{\left[L^2(\Omega' \setminus \varepsilon \overline{\mathcal{O}})\right]^3} \| \Delta f_j \|_{L^{\infty}(\Omega)} \\
& + \left\| \Delta z_j^i \left( \frac{\cdot}{\varepsilon} \right) \right\|_{\left[L^2(\Omega' \setminus \varepsilon \overline{\mathcal{O}})\right]^3} \| \partial_i f_j \|_{L^{\infty}(\Omega)} + \varepsilon\left\| \nabla z_j^i \left( \frac{\cdot}{\varepsilon} \right) \right\|_{\left[L^2(\Omega' \setminus \varepsilon \overline{\mathcal{O}})\right]^{3 \times 3}} \| \nabla \partial_i f_j \|_{\left[L^{\infty}(\Omega)\right]^3} \\
&  + \varepsilon^2 \left\| z_j^i \left( \frac{\cdot}{\varepsilon} \right) \right\|_{\left[ L^2(\Omega' \setminus \varepsilon \overline{\mathcal{O}}) \right]^3} \| \Delta \partial_i f_j \|_{L^{\infty}(\Omega)} 
+ \left\| p_j \left( \frac{\cdot}{\varepsilon} \right) - \lambda_{\varepsilon}^j \right\|_{L^2(\Omega' \setminus \varepsilon \overline{\mathcal{O}})} \| \nabla f_j \|_{\left[L^{\infty}(\Omega)\right]^3}. \\
& \leq C \varepsilon^{\frac{3}{2}} \left[ \| w_j \|_{\left[H^1(\frac{1}{\varepsilon} \Omega' \setminus \varepsilon \overline{\mathcal{O}})\right]^3} 
+ \| z_j^i \|_{\left[ H^2(\frac{1}{\varepsilon} \Omega' \setminus \varepsilon \overline{\mathcal{O}}) \right]^3} + \| p_j - \lambda_{\varepsilon}^j \|_{L^2(\frac{1}{\varepsilon} \Omega' \setminus \varepsilon \overline{\mathcal{O}})} \right] \\
& = C \varepsilon^{\frac{3}{2}} \left[ (A) + (B) + (C) \right].
\end{aligned}
\end{equation}
We treat each term separetely. For (A), we have 
\begin{equation}
\begin{aligned}
\| w_j \|_{\left[ H^1(\frac{1}{\varepsilon} \Omega' \setminus \varepsilon \overline{\mathcal{O}}) \right]^3} & \leq \| w_j^{\mathrm{per}}\|_{\left[ H^1(\frac{1}{\varepsilon} \Omega' \setminus \varepsilon \overline{\mathcal{O}})\right]^3} + \| \widetilde{w_j} \|_{\left[ H^1(\frac{1}{\varepsilon} \Omega' \setminus \varepsilon \overline{\mathcal{O}})\right]^3} \\ 
& \leq C \varepsilon^{-\frac{3}{2}} \| \nabla w_j^{\mathrm{per}} \|_{H^1(Q)} +  \| \widetilde{w_j} \|_{\left[H^1(\mathbb{R}^3 \setminus \overline{\mathcal{O}}) \right]^3} .
\label{eq:h2cv1}
\end{aligned}
\end{equation}
For (B), we apply Lemma \ref{lem:cor2h2pasper} (and especially \eqref{eq:36}):
\begin{equation}
\| z_j^i \|_{\left[ H^2(\frac{1}{\varepsilon} \Omega' \setminus \varepsilon \overline{\mathcal{O}}) \right]^3} \leq C \varepsilon^{-\frac{3}{2}}\| w_j^{i,\mathrm{per}} \|_{\left[H^1(Q)\right]^3} + C\varepsilon^{- 1} \| \widetilde{w_j^i} \|_{\left[H^1(\mathbb{R}^3)\right]^3}
\label{eq:h2cv2}
\end{equation}
For (C), Theorem \ref{th:cor} gives
\begin{equation}
\begin{aligned}
 \| p_j - \lambda_{\varepsilon}^j \|_{L^2(\frac{1}{\varepsilon} \Omega' \setminus \varepsilon \overline{\mathcal{O}})} & \leq  \| p_j^{\mathrm{per}} - \lambda_{\varepsilon}^{j,\mathrm{per}} \|_{L^2(\frac{1}{\varepsilon} \Omega' \setminus \varepsilon \overline{\mathcal{O}})} + 
\| \widetilde{ p_j } - \widetilde{\lambda_{\varepsilon}^j} \|_{L^2(\frac{1}{\varepsilon} \Omega' \setminus \varepsilon \overline{\mathcal{O}})} \\
& \leq C \varepsilon^{-\frac{3}{2}}\| p_j^{\mathrm{per}} \|_{L^2(Q)} + C \varepsilon^{ - 1}.
\label{eq:h2cv3}
\end{aligned}
\end{equation}
Collecting \eqref{eq:h2cv1},\eqref{eq:h2cv2} and \eqref{eq:h2cv3}, we conclude that there exists a constant $C > 0$ independent of $\varepsilon$ such that $$\| f_{\varepsilon} \|_{\left[L^2(\Omega_{\varepsilon})\right]^3} \leq C.$$ 
We now study $\mathrm{div}(R_{\varepsilon})$. Using Lemma \ref{lem:cor2h2pasper}, we have in $\Omega_{\varepsilon}$:
$$\mathrm{div}(R_{\varepsilon}) = - \varepsilon^2 \chi \left( \frac{\cdot}{\varepsilon} \right) A_j^i \partial_i f_j - \varepsilon^3 z_j^i \left( \frac{\cdot}{\varepsilon} \right) \cdot \nabla \partial_i f_j.$$
We recall that $\mathrm{div}(Af) = A_j^i \partial_i f_j = 0$. Thus,
$$- \mathrm{div}(R_{\varepsilon}) = \varepsilon^3 z_j^i \left( \frac{\cdot}{\varepsilon} \right) \cdot \nabla \partial_i f_j.$$
We have that
$\varepsilon^3 z_j^i \left( \frac{\cdot}{\varepsilon} \right) \cdot \nabla \partial_i f_j \in \left[ H^1_0(\Omega_{\varepsilon}) \right]^3$ and $\int_{\Omega_{\varepsilon}} \varepsilon^3 z_j^i \left( \frac{\cdot}{\varepsilon} \right) \cdot \nabla \partial_i f_j = 0$. By Lemma \ref{lem:h2} stated in the appendix, there exists $S_{\varepsilon} \in \left[ H^2_0(\Omega_{\varepsilon}) \right]^3$ such that
$$\mathrm{div}(S_{\varepsilon}) = \varepsilon^3 z_j^i \left( \frac{\cdot}{\varepsilon} \right) \cdot \nabla \partial_i f_j \ \ \ \mathrm{and} \ \ \  \| S_{\varepsilon} \|_{H^2(\Omega_{\varepsilon})} \leq C \varepsilon^2 \left\| z_j^i \left( \frac{\cdot}{\varepsilon} \right) \cdot \nabla \partial_i f_j \right\|_{H^1_0(\Omega_{\varepsilon})}.$$
Using that $f \in \left[W^{2,\infty}(\Omega)\right]^3$ and Lemma~\ref{lem:cor2h2pasper}, we get
$$\left\| z_j^i \left( \frac{\cdot}{\varepsilon} \right) \cdot \nabla \partial_i f_j \right\|_{\left[H^1(\Omega_{\varepsilon})\right]^3} \leq \frac{C}{\varepsilon}.$$
Thus 
\begin{equation}
\| S_{\varepsilon} \|_{H^2(\Omega_{\varepsilon})} \leq C \varepsilon.
\label{eq:seps}
\end{equation}
We now define 
$\widehat{R_{\varepsilon}} := R_{\varepsilon} + S_{\varepsilon}.$
The pair $(\widehat{R_{\varepsilon}},\pi_{\varepsilon}) \in \left[ H^1_0(\Omega_{\varepsilon}) \right]^3 \times L^2(\Omega_{\varepsilon})$ is solution to the following Stokes sytem:
\begin{equation}
\begin{cases}
\begin{aligned}
- \Delta \widehat{R_{\varepsilon}} + \nabla \pi_{\varepsilon} & = \varepsilon f_{\varepsilon} - \Delta S_{\varepsilon} \\
\mathrm{div}(\widehat{R_{\varepsilon})} & = 0 \\
\widehat{R_{\varepsilon}}_{|\partial \Omega_{\varepsilon}} & = 0.
\end{aligned}
\end{cases}
\end{equation} 
We notice that $\varepsilon f_{\varepsilon} - \Delta S_{\varepsilon} \in \left[ L^2(\Omega_{\varepsilon}) \right]^3$ thus we may apply Theorem \ref{th:masmoudi}: for all $\Omega'' \subset \Omega$, we have for $\varepsilon < \varepsilon_0(\Omega'')$,
$$\| D^2 \widehat{R_{\varepsilon}} \|_{L^2(\Omega \cap \Omega''_{\varepsilon})} \leq C \| \varepsilon f_{\varepsilon} - \Delta S_{\varepsilon} \|_{L^2(\Omega_{\varepsilon})} \leq C \varepsilon \| f_{\varepsilon} \|_{L^2(\Omega_{\varepsilon})} + \| S_{\varepsilon} \|_{H^2(\Omega_{\varepsilon})} \leq C \varepsilon,$$
and
$$\| \nabla \pi_{\varepsilon} \|_{L^2(\Omega'' \cap \Omega_{\varepsilon})} \leq C \varepsilon.$$
By the triangle inequality and \eqref{eq:seps}, we conclude that
$$\left\| D^2\left[ u_{\varepsilon} - \varepsilon^2 w_j \left( \frac{\cdot}{\varepsilon}\right) f_j \right] \right\|_{L^2(\Omega'' \cap \Omega_{\varepsilon})} \leq C \varepsilon \quad \mathrm{and} \quad \left\| \nabla \left[ p_{\varepsilon} - \varepsilon  \left\{ p_j  \left( \frac{\cdot}{\varepsilon} \right) - \lambda_{\varepsilon}^j \right\} f_j \right] \right\|_{L^2(\Omega'' \cap \Omega_{\varepsilon})} \leq C \varepsilon.$$
\end{proof}

\section*{Acknowledgments}

I am very grateful to my PhD advisor Xavier Blanc for many fruitful discussions and for careful reading of the manuscript. I also thank Claude le Bris for suggesting this subject to me and supporting this project.

%Preuve des résultats 3 et 4 \\

%\underline{Uniformité en les cellules des constantes de Galdi (pb en divergence?)}

\appendix

\section{Technical Lemmas}
\label{sect:appendix}

We recall that if $R > 0$, we define 
\begin{equation}
\Omega^{R} := R \Omega \setminus \bigcup_{k, \ Q_k \subset R\Omega} \mathcal{O}_k.
\label{eq:omegaR}
\end{equation}

\begin{lemme}
[Divergence Lemma on $\Omega^R$] Suppose that Assumption \textbf{(A4)$_0$} is satisfied. Let $1 < q < +\infty$ and $R > 0$.
Let $f \in L^q(\Omega^R)$ be such that
$$\int_{\Omega^{R}} f = 0.$$
The problem
\begin{equation}
    \begin{cases}
    \begin{aligned}
        - \mathrm{div}(v)  & = f \quad \mathrm{in} \quad \Omega_R \\
        v_{} & = 0 \quad \mathrm{on} \quad \partial \Omega_R
    \end{aligned}
    \end{cases}
\end{equation}
admits a solution $v \in \left[W^{1,q}(\Omega_R)\right]^3$ such that
\begin{equation}
    \|v\|_{\left[W^{1,q}(\Omega_R)\right]^3} \leq C R \| f \|_{L^q(\Omega_R)}
\label{eq:lem2}
\end{equation}
where $C > 0$ is a constant independent of $f$ and $R$.
\label{lem2}
\end{lemme}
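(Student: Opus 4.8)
The plan is to reduce the divergence problem on $\Omega^R$ to the cell-wise solvability provided by Assumption \textbf{(A4)$_0$}, following the classical Bogovskii-type decomposition but keeping careful track of the dependence on $R$. First I would write $\Omega^R$ as a disjoint union (up to measure zero) of the scaled periodic cells $Q_k \setminus \overline{\mathcal{O}_k}$ with $Q_k \subset R\Omega$, together with a boundary layer consisting of those (partial) cells that meet $\partial(R\Omega)$. The strategy is to first correct the mean of $f$ on each cell, then solve a local divergence problem on each cell with zero boundary data, and finally patch the results together.

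\textbf{Step 1 (mean correction).} Let $c_k := \frac{1}{|Q_k \setminus \overline{\mathcal{O}_k}|}\int_{Q_k \setminus \overline{\mathcal{O}_k}} f$ on each interior cell, and handle the boundary-layer cells similarly (grouping each partial boundary cell with a neighbouring interior cell so that the resulting pieces are uniformly Lipschitz with uniformly bounded geometry, using the fact from \cite[Lemma 3.2]{BW} and Assumption \textbf{(A3)} that $d(\mathcal{O}_k,\partial Q_k)\geq\delta$ uniformly). Consider the piecewise-constant function $g := f - \sum_k c_k \mathbf{1}_{Q_k \setminus \overline{\mathcal{O}_k}}$, which has zero mean on every cell, and the remaining piecewise-constant field $h := \sum_k c_k \mathbf{1}_{Q_k \setminus \overline{\mathcal{O}_k}}$, which has total mean zero on $\Omega^R$. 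For $g$: on each cell apply Assumption \textbf{(A4)$_0$} to obtain $v_k \in [W^{1,q}_0(Q_k \setminus \overline{\mathcal{O}_k})]^3$ with $-\operatorname{div} v_k = g$ on that cell and $\|v_k\|_{W^{1,q}} \leq C_q^0 \|g\|_{L^q(Q_k \setminus \overline{\mathcal{O}_k})}$, the constant being uniform in $k$. Extending each $v_k$ by zero and summing, $v^{(1)} := \sum_k v_k$ lies in $[W^{1,q}_0(\Omega^R)]^3$ (the zero traces on the internal cell boundaries make the global function $W^{1,q}$), and by $\ell^q$-summation of the cellwise estimates, $\|v^{(1)}\|_{W^{1,q}(\Omega^R)} \leq C \|g\|_{L^q(\Omega^R)} \leq C\|f\|_{L^q(\Omega^R)}$ — note this part costs no factor of $R$.

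\textbf{Step 2 (the piecewise-constant remainder $h$).} This is where the factor $R$ enters, and I expect it to be the main obstacle. The field $h$ is supported on all of $R\Omega$ minus the holes and is genuinely global, so no cellwise argument handles it. The plan is to absorb the holes first: since each $\mathbf{1}_{Q_k\setminus\overline{\mathcal{O}_k}}$ differs from $\mathbf{1}_{Q_k}$ only on $\mathcal{O}_k$, write $h = \widehat h - \sum_k c_k \mathbf{1}_{\mathcal{O}_k}$ where $\widehat h$ is (essentially) a function on $R\Omega$ with the same cell-averages; the correction $\sum_k c_k \mathbf{1}_{\mathcal{O}_k}$ has zero mean on each cell and is dealt with cellwise exactly as in Step 1 via \textbf{(A4)$_0$} (viewing $\mathcal{O}_k$ as a source concentrated in the cell). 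For the smooth, order-$1$-oscillating piece $\widehat h$ on $R\Omega$: rescale by $x \mapsto x/R$ to the fixed bounded domain $\Omega$, where $\widehat h(R\,\cdot)$ has total mean zero and $L^q(\Omega)$-norm $R^{-3/q}\|\widehat h\|_{L^q(R\Omega)}$; solve $-\operatorname{div}\, \widehat v = \widehat h(R\cdot)$ on $\Omega$ with the classical Bogovskii operator (\cite[Theorem III.3.1]{galdi2011introduction}), giving $\|\widehat v\|_{W^{1,q}(\Omega)} \leq C\|\widehat h(R\cdot)\|_{L^q(\Omega)}$; then scale back, $v^{(2)}(x) := R\,\widehat v(x/R)$, which satisfies $-\operatorname{div} v^{(2)} = \widehat h$ on $R\Omega$ and, tracking the Jacobians, $\|\nabla v^{(2)}\|_{L^q(R\Omega)} = \|\nabla\widehat v\|_{L^q(\Omega)}$ but $\|v^{(2)}\|_{L^q(R\Omega)} = R\,\|\widehat v\|_{L^q(\Omega)}$ — precisely the single factor of $R$ in \eqref{eq:lem2}. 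Care is needed because $v^{(2)}$ does not vanish on $\partial\mathcal{O}_k$; here I would instead run Step 2 on the "unperforated" domain and then, in a final cellwise pass, subtract off a field (again from \textbf{(A4)$_0$}) that fixes the trace on each hole at the cost of its restriction's $W^{1,q}$-norm, which is controlled by $\|v^{(2)}\|_{W^{1,q}}$ summed over cells — this is the delicate bookkeeping step.

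\textbf{Step 3 (assembly).} Set $v := v^{(1)} + v^{(2)} + (\text{the hole corrections})$. Then $-\operatorname{div} v = f$ on $\Omega^R$, $v = 0$ on $\partial\Omega^R$, and collecting the estimates gives $\|v\|_{W^{1,q}(\Omega^R)} \leq C R \|f\|_{L^q(\Omega^R)}$ with $C$ depending only on $q$, $\Omega$ and the uniform constant $C_q^0$ from \textbf{(A4)$_0$}, hence independent of $f$ and $R$, which is \eqref{eq:lem2}. The one genuine subtlety throughout is ensuring that all the geometric constants — the Bogovskii constant on the reference cells, the trace/extension constants, and the Poincaré-type constants for the boundary-layer cells — are uniform in $k$; this is exactly guaranteed by Assumptions \textbf{(A1)}, \textbf{(A3)} and \textbf{(A4)$_0$} together with the translation invariance of the reference cell geometry.
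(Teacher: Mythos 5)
Your overall strategy (global Bogovskii on the unperforated domain $R\Omega$ to get the factor $R$ via scaling, then cellwise corrections via \textbf{(A4)$_0$}) is the right one, and Step 1 is correct as far as it goes. But there is a genuine gap in Step~2, at exactly the place you flagged as ``delicate bookkeeping.''

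The issue is the flux-compatibility for your final hole correction. You solve $-\operatorname{div} v^{(2)} = \widehat h$ on $R\Omega$ and then want a cellwise field from \textbf{(A4)$_0$} that kills the trace of $v^{(2)}$ on each $\partial\mathcal{O}_k$. That correction problem has the compatibility condition $\int_{\partial\mathcal{O}_k} v^{(2)}\cdot n = 0$, i.e. $\int_{\mathcal{O}_k}\operatorname{div}(v^{(2)}) = -\int_{\mathcal{O}_k}\widehat h = 0$. If $\widehat h$ is taken to be $c_k$ throughout $Q_k$ (which is what ``same cell-averages'' and the identity $h = \widehat h - \sum_k c_k\mathbf{1}_{\mathcal{O}_k}$ suggest), this gives $-c_k|\mathcal{O}_k| \neq 0$, so the correction is unsolvable; moreover $\int_{R\Omega}\widehat h = \sum_k c_k|Q_k|$ need not vanish, so even the global Bogovskii step on $R\Omega$ has a compatibility problem. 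The piece $\sum_k c_k\mathbf{1}_{\mathcal{O}_k}$ cannot rescue this because it is supported inside the holes, hence identically zero on $\Omega^R$, so ``dealing with it cellwise via \textbf{(A4)$_0$}'' has no content — and it does not have zero mean on $Q_k$ in any case ($\int_{Q_k} c_k\mathbf{1}_{\mathcal{O}_k} = c_k|\mathcal{O}_k|$). The only way to make the compatibility work is to extend $h$ by $0$ into the holes, at which point your Step~1 decomposition was unnecessary.

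The paper does exactly this, in a shorter way: extend $f$ itself by $0$ into the perforations (not just its cell-average part), solve $-\operatorname{div}(v_1) = f$ on $R\Omega$ with Bogovskii — this is where the single factor of $R$ appears, by scaling \cite[Theorem III.3.1]{galdi2011introduction} to the reference domain $\Omega$ — and then, for each cell $Q_k\subset R\Omega$, use \textbf{(A4)$_0$} to solve $\operatorname{div}(v_2^k)=0$ in $Q_k\setminus\overline{\mathcal{O}_k}$ with $v_2^k=-v_1$ on $\partial\mathcal{O}_k$ and $v_2^k=0$ on $\partial Q_k$. The compatibility is now automatic: $\int_{\partial\mathcal{O}_k}v_1\cdot n=-\int_{\mathcal{O}_k}\operatorname{div}(v_1)=\int_{\mathcal{O}_k}f=0$ precisely because $f$ was extended by $0$. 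The cellwise estimates $\|v_2^k\|_{W^{1,q}}\le C\|v_1\|_{W^{1,q}(Q_k\setminus\overline{\mathcal{O}_k})}$, with a constant uniform in $k$ by \textbf{(A4)$_0$}, sum up to $\|v_2\|_{W^{1,q}(\Omega^R)}\le C\|v_1\|_{W^{1,q}(R\Omega)}\le CR\|f\|_{L^q}$, and $v:=v_1+v_2$ does the job. If you replace your Steps 1--2 with this single ``extend by zero then Bogovskii'' step, your final-pass hole corrections become exactly the paper's $v_2^k$, with the compatibility condition built in rather than postponed.
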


\begin{proof}
We first extend $f$ by 0 in the perforations. We then solve the problem
\begin{equation}
\begin{cases}
\begin{aligned}
- \mathrm{div}(v_1) & = f \ \mathrm{in} \ R\Omega \\
v_1 & \in \left[ W^{1,q}_0(R\Omega) \right]^3.
\end{aligned}
\end{cases}
\label{eq:macro}
\end{equation}
By Lemma \cite[Theorem III.3.1]{galdi2011introduction} and a simple scaling argument, Problem \eqref{eq:macro} admits a solution $v_1$ such that
$$\| v_1 \|_{W^{1,q}(R\Omega)} \leq C R \| f \|_{L^q(R\Omega)}$$ 
with the constant $C$ being independent of $R$. For $k \in \mathbb{Z}^3$ such that $Q_k \subset R\Omega$, we consider the problem 
\begin{equation}
\begin{cases}
\begin{aligned}
\mathrm{div}(v_2^k) & = 0 \quad \mathrm{in} \quad  Q_k \setminus \overline{\mathcal{O}_k} \\
v^k_{2} & = 0 \quad \mathrm{on} \quad \partial Q_k \\
v^k_{2} & = - v_1 \quad \mathrm{on} \quad \partial \mathcal{O}_k.
\end{aligned}
\end{cases}
\label{eq:micro}
\end{equation}
The compatibility condition for \eqref{eq:micro} is satisfied:
$$- \int_{\partial \mathcal{O}_k} v_1\cdot n = - \int_{\mathcal{O}_k} \mathrm{div}(v_1) = \int_{\mathcal{O}_k} f = 0.$$ Arguing as for Problem \eqref{eq:39}, we show that Problem \eqref{eq:micro} admits a solution $v_2^k \in \left[ W^{1,q}(Q_k \setminus \overline{\mathcal{O}_k}) \right]^3$ such that (the constant $C$ is independent of $k$ thanks to Assumption \textbf{(A4)$_0$}):
\begin{equation}
\| v_2^k \|_{\left[W^{1,q}(Q_k \backslash \overline{\mathcal{O}_k})\right]^3} \leq C \|v_1\|_{\left[W^{1,q}(Q_k \setminus\overline{\mathcal{O}_k})\right]^3}.
\label{eq:v2k}
\end{equation}
We extend $v_2^k$ by zero to $\mathbb{R}^3 \setminus \overline{\mathcal{O}}$. We define the function
$$v_2 := \sum_{k, Q_k \subset R\Omega} v_2^k 1_{Q_k \backslash \overline{\mathcal{O}_k}},$$
Summing \eqref{eq:v2k} over $k$ such that $Q_k \subset R \Omega$ yields
$$\| v_2 \|_{W^{1,q}(\Omega_R)} \leq C \| v_1 \|_{W^{1,q}(R\Omega)} \leq CR \|f \|_{L^q(Q_R)}.$$
We set $v = v_1 + v_2$ and notice that $v$ satisfies the conclusion of Lemma \ref{lem2}.
\end{proof}

\begin{lemme}  Suppose that Assumption \textbf{(A4)$_0$} is satisfied.
Let $1 < q < +\infty$ and $R > 0$. 
%Let $\Omega \subset \mathbb{R}^3$ be a bounded Lipschitz domain  and let $\Omega_R$ be the associated perforated domain.
 Let $f \in \mathcal{D}'(\Omega^R)$ be such that $\nabla f \in \left[ W^{-1,q}(\Omega^R) \right]^3$. Then $f \in L^q(\Omega^R)/\mathbb{R}$ and
\begin{equation}
    \| f \|_{L^q(\Omega^R)/\mathbb{R}} \leq C R \| \nabla f \|_{\left[ W^{-1,q}(\Omega^R) \right]^3}
\label{eq:lemme3}
\end{equation}
\label{lem4}
where $C$ is a constant independent of $f$ and $R$. 
\end{lemme}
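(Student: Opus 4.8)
The plan is to prove \eqref{eq:lemme3} by a duality argument built on the divergence Lemma~\ref{lem2}, taking care that the mean-value reductions involved cost only absolute constants, so that the factor $R$ comes entirely from Lemma~\ref{lem2}.

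First I would record that $\Omega^R$ (defined in \eqref{eq:omegaR}) is a bounded, locally Lipschitz domain: the macroscopic part of its boundary is $\partial(R\Omega)$, which is smooth, and the perforations removed are Lipschitz and, by Assumption~\textbf{(A1)}--\textbf{(A3)}, strictly interior to their cells uniformly in $k$, so removing those whose cell lies inside $R\Omega$ preserves the Lipschitz character. Consequently the qualitative de Rham--Ne\v{c}as lemma on bounded Lipschitz domains (see e.g. \cite[Lemma III.1.1]{galdi2011introduction} or \cite[Theorem 2.1]{amrouche1994decomposition}) applies and gives $f \in L^q(\Omega^R)$ from $\nabla f \in \left[W^{-1,q}(\Omega^R)\right]^3$. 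I would then fix the representative of $f$ with $\int_{\Omega^R} f = 0$; this changes neither $\nabla f$ nor $\|f\|_{L^q(\Omega^R)/\mathbb{R}}$, and since $\|f\|_{L^q(\Omega^R)/\mathbb{R}} \leq \|f\|_{L^q(\Omega^R)}$ it suffices to bound the latter.

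Next, by $L^q$--$L^{q'}$ duality, $\|f\|_{L^q(\Omega^R)} = \sup\{ \int_{\Omega^R} fg : g \in L^{q'}(\Omega^R),\ \|g\|_{L^{q'}(\Omega^R)} \leq 1\}$. For such a $g$ I would subtract its mean $\langle g\rangle$ on $\Omega^R$: since $\int_{\Omega^R} f = 0$, one has $\int_{\Omega^R} fg = \int_{\Omega^R} f(g-\langle g\rangle)$, and a one-line application of H\"older's inequality gives $\|g-\langle g\rangle\|_{L^{q'}(\Omega^R)} \leq 2\|g\|_{L^{q'}(\Omega^R)}$ with the constant $2$ independent of $R$. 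So it is enough to estimate $\int_{\Omega^R} fg$ for $g \in L^{q'}(\Omega^R)$ with $\int_{\Omega^R} g = 0$ and $\|g\|_{L^{q'}(\Omega^R)}\leq 1$. For such $g$, Lemma~\ref{lem2} applied with exponent $q'$ yields $v \in \left[W^{1,q'}_0(\Omega^R)\right]^3$ with $-\mathrm{div}(v) = g$ and $\|v\|_{\left[W^{1,q'}(\Omega^R)\right]^3} \leq C R \|g\|_{L^{q'}(\Omega^R)} \leq CR$. Since $f \in L^q(\Omega^R)$ and $v \in \left[W^{1,q'}_0(\Omega^R)\right]^3$, the identity $\int_{\Omega^R} fg = -\int_{\Omega^R} f\,\mathrm{div}(v) = \langle \nabla f, v\rangle$ holds: it is the definition of $\nabla f$ tested against $v \in \left[\mathcal{D}(\Omega^R)\right]^3$, and both sides are continuous in $v$ for the $W^{1,q'}$ topology (the left by $\nabla f \in W^{-1,q}$, the right by $f \in L^q$), hence it extends by density of $\mathcal{D}(\Omega^R)$ in $W^{1,q'}_0(\Omega^R)$. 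Therefore $\int_{\Omega^R} fg \leq \|\nabla f\|_{\left[W^{-1,q}(\Omega^R)\right]^3}\|v\|_{\left[W^{1,q'}(\Omega^R)\right]^3} \leq CR\|\nabla f\|_{\left[W^{-1,q}(\Omega^R)\right]^3}$, and taking the supremum over $g$ gives $\|f\|_{L^q(\Omega^R)/\mathbb{R}} \leq \|f\|_{L^q(\Omega^R)} \leq 2CR\|\nabla f\|_{\left[W^{-1,q}(\Omega^R)\right]^3}$, which is \eqref{eq:lemme3}.

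The only genuine point of care is the bookkeeping of the $R$-dependence: the scaling factor $R$ is already built into Lemma~\ref{lem2}, and the two reductions (to a mean-zero representative of $f$ and to mean-zero test functions $g$) must be checked to cost only absolute constants, which is exactly what the H\"older estimate above provides. The a priori regularity $f \in L^q(\Omega^R)$ is the single ingredient I would take from an external, purely qualitative result rather than reprove here.
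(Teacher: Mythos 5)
Your argument is correct and is essentially the paper's own proof: both obtain the qualitative $f\in L^q(\Omega^R)$ from the de Rham--type result in \cite{amrouche1994decomposition}, then argue by $L^q$--$L^{q'}$ duality, mean-correcting the test function $g$ (at the cost of an absolute factor $2$) so that Lemma~\ref{lem2} applies, and finally pair $\nabla f$ with the resulting $v\in\left[W^{1,q'}_0(\Omega^R)\right]^3$ to extract the $CR$ bound. The only cosmetic difference is that you fix the mean-zero representative of $f$ at the outset, whereas the paper keeps $f$ arbitrary and works with $f-\lambda_f$ throughout; these are the same computation.
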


\begin{proof}
The fact that $f \in L^q(\Omega_R)/\mathbb{R}$ follows from \cite[Lemma 2.7]{amrouche1994decomposition}.
We now show the estimate \eqref{eq:lemme3}. For $u \in L^1(\Omega^R)$, we denote 
$\lambda_u := \frac{1}{|\Omega^R|}\int_{\Omega^R} u.$ We prove that there exists a constant $C$ independent of $R$ such that
\begin{equation}
    \| f - \lambda_f \|_{L^q(\Omega_R)} \leq C R \| \nabla f \|_{\left[W^{-1,q}(\Omega^R) \right]^3}.
    \label{eq:auxlemme3}
\end{equation}
We argue by duality. We set $q' = q/(q-1)$. We fix a function $g \in L^{q'}(\Omega^R)$ and we define
$\overline{g} := g - \lambda_g.$
We apply Lemma \ref{lem2} to $\overline{g}$: there exists a function $v_g \in \left[W^{1,q'}_0(\Omega^R)\right]^3$ such that 
$$\begin{cases}
\begin{aligned}
- \mathrm{div}(v_g) & = \overline{g} \\
\|v_g \|_{\left[W^{1,q'}(\Omega^R)\right]^3} & \leq C R \| \overline{g} \|_{L^{q'}(\Omega^R)}.
\end{aligned}
\end{cases}$$
Since $\| \overline{g} \|_{L^{q'}(\Omega_R)} \leq 2 \|g\|_{L^{q'}(\Omega_R)}$, we have $\|v_g \|_{\left[W^{1,q'}(\Omega_R)\right]^3} \leq C R \| g \|_{L^{q'}(\Omega_R)}$.
We now write :
$$\langle \nabla f,v_g \rangle_{\left[ W^{-1,q} \times W^{1,q'}_0(\Omega^R)\right]^3} = - \int_{\Omega^R} (f - \lambda_f)\mathrm{div}(v_g) = -\int_{\Omega^R} (f - \lambda_f)(g - \lambda_g) = -\int_{\Omega^R} (f - \lambda_f)g.$$
Thus
$$\left|\int_{\Omega^R} (f - \lambda_f)g \right| \leq \| \nabla f \|_{\left[W^{-1,q}(\Omega^R) \right]^3} \| v_g \|_{\left[W^{1,q'}_0(\Omega^R)\right]^3} \leq C R \| \nabla f \|_{\left[W^{-1,q}(\Omega^R) \right]^3} \| g \|_{L^{q'}(\Omega^R)}.$$
Taking the supremum over $g$, we conclude the proof of the Lemma.
\end{proof}

\begin{lemme}[Scaling] Suppose that Assumption \textbf{(A4)$_0$} is satisfied. Let $1 < q < + \infty$. Let $\varepsilon > 0$ and $\Omega_{\varepsilon}$ be defined by \eqref{eq:Omega_eps}. There exists a constant $C > 0$ independent of $\varepsilon$ such that for all $f \in \mathcal{D}'(\Omega_{\varepsilon})$ such that $\nabla f \in W^{-1,q}(\Omega_{\varepsilon})$, we have $f \in L^q(\Omega_{\varepsilon})/\mathbb{R}$ and the estimate
$$\| f \|_{L^q(\Omega_{\varepsilon})/\mathbb{R}} \leq C \varepsilon^{-1} \| \nabla f \|_{\left[W^{-1,q}(\Omega_{\varepsilon})\right]^3}.$$
\label{lem:estimpres}
\end{lemme}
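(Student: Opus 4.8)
The plan is to deduce Lemma~\ref{lem:estimpres} from Lemma~\ref{lem4} by a straightforward rescaling argument, taking $R = 1/\varepsilon$. Recall that with this choice one has $\Omega^{R} = \frac{1}{\varepsilon}\Omega_{\varepsilon}$, which is the observation already made in the excerpt just after the definition of $\Omega^{R}$. So the idea is: pull back the function $f$ defined on $\Omega_{\varepsilon}$ to a function $\widehat f$ on $\frac{1}{\varepsilon}\Omega_{\varepsilon}$ by the dilation $y \mapsto \varepsilon y$, apply Lemma~\ref{lem4} on $\Omega^{R}$ with $R = 1/\varepsilon$, and then track how the various norms scale back.

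Concretely, first I would set $\widehat f(y) := f(\varepsilon y)$ for $y \in \frac{1}{\varepsilon}\Omega_{\varepsilon} = \Omega^{1/\varepsilon}$. One checks that $\widehat f \in \mathcal{D}'(\Omega^{1/\varepsilon})$ and that $\nabla \widehat f(y) = \varepsilon (\nabla f)(\varepsilon y)$, so $\nabla \widehat f \in [W^{-1,q}(\Omega^{1/\varepsilon})]^3$ whenever $\nabla f \in [W^{-1,q}(\Omega_{\varepsilon})]^3$. Then Lemma~\ref{lem4} applied with $R = 1/\varepsilon$ gives $\widehat f \in L^q(\Omega^{1/\varepsilon})/\mathbb{R}$ together with the estimate
$$\| \widehat f \|_{L^q(\Omega^{1/\varepsilon})/\mathbb{R}} \leq \frac{C}{\varepsilon}\, \| \nabla \widehat f \|_{[W^{-1,q}(\Omega^{1/\varepsilon})]^3},$$
with $C$ independent of $\varepsilon$ since it was independent of $R$ in Lemma~\ref{lem4}.

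The only remaining work is bookkeeping of scaling factors. Under the dilation $y \mapsto \varepsilon y$ one has, for the $L^q/\mathbb{R}$ norm, $\| \widehat f \|_{L^q(\Omega^{1/\varepsilon})/\mathbb{R}} = \varepsilon^{-3/q}\| f \|_{L^q(\Omega_{\varepsilon})/\mathbb{R}}$ (a pure change of variables in the integral, the infimum over constants being unaffected). For the negative Sobolev norm, testing against $\phi \in [W^{1,q'}_0(\Omega_{\varepsilon})]^3$ and rescaling the test function, one gets $\| \nabla \widehat f \|_{[W^{-1,q}(\Omega^{1/\varepsilon})]^3} = \varepsilon^{1 - 3/q}\, \| \nabla f \|_{[W^{-1,q}(\Omega_{\varepsilon})]^3}$ (the factor $\varepsilon$ from the gradient, $\varepsilon^{-3/q}$ from the $L^q$-type scaling of the dual pairing, and the $W^{1,q'}_0$ normalization of the test function contributing the complementary power); I would carry this computation out explicitly with the Jacobian factors. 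Dividing the common factor $\varepsilon^{-3/q}$ through the inequality leaves exactly
$$\| f \|_{L^q(\Omega_{\varepsilon})/\mathbb{R}} \leq C \varepsilon^{-1} \| \nabla f \|_{[W^{-1,q}(\Omega_{\varepsilon})]^3},$$
which is the claim. There is no real obstacle here; the only point requiring care — and the one I would double-check most carefully — is getting the exponents of $\varepsilon$ right in the scaling of the $W^{-1,q}$ norm, since both the gradient and the duality with $W^{1,q'}_0$ contribute powers of $\varepsilon$, and an error there would change the final rate.
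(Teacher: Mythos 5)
Your strategy is exactly the paper's: the paper's own proof of this lemma is, verbatim, ``apply Lemma~\ref{lem4} with $R = 1/\varepsilon$ and use a scaling argument''. So the route is right. However, the one computation you flag as the delicate point — the scaling of the $W^{-1,q}$ norm — is in fact wrong, and with your claimed exponent the final inequality would not come out.

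Here is the issue. You assert
$\| \nabla \widehat f \|_{[W^{-1,q}(\Omega^{1/\varepsilon})]^3} = \varepsilon^{1-3/q}\, \| \nabla f \|_{[W^{-1,q}(\Omega_{\varepsilon})]^3}$, but the correct exponent is $\varepsilon^{-3/q}$, the \emph{same} as for the $L^q/\mathbb{R}$ norm. To see this, take $\phi \in [W^{1,q'}_0(\Omega^{1/\varepsilon})]^3$, set $\psi(x) := \phi(x/\varepsilon)$, and compute
$\langle \nabla\widehat f, \phi\rangle_{\Omega^{1/\varepsilon}} = -\int_{\Omega^{1/\varepsilon}} f(\varepsilon y)\,\mathrm{div}\,\phi(y)\,dy = -\varepsilon^{-3}\varepsilon \int_{\Omega_\varepsilon} f\,\mathrm{div}\,\psi = \varepsilon^{-2}\langle\nabla f,\psi\rangle_{\Omega_\varepsilon}$,
while, using the homogeneous gradient seminorm on $W^{1,q'}_0$, $\|\nabla\psi\|_{L^{q'}(\Omega_\varepsilon)} = \varepsilon^{3/q'-1}\|\nabla\phi\|_{L^{q'}(\Omega^{1/\varepsilon})}$. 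Taking the supremum gives $\|\nabla\widehat f\|_{W^{-1,q}(\Omega^{1/\varepsilon})} = \varepsilon^{-2}\,\varepsilon^{3/q'-1}\,\|\nabla f\|_{W^{-1,q}(\Omega_\varepsilon)} = \varepsilon^{-3/q}\|\nabla f\|_{W^{-1,q}(\Omega_\varepsilon)}$, since $3/q'-3 = -3/q$. (Your bookkeeping picks up the factor $\varepsilon$ from $\nabla\widehat f = \varepsilon(\nabla f)(\varepsilon\cdot)$ and the Jacobian $\varepsilon^{-3/q}$, but misses the $\varepsilon^{-1}$ coming from $\nabla_x\psi = \varepsilon^{-1}\nabla\phi$ in the test-function normalization.)

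This is not a harmless slip: if one substitutes your exponent $\varepsilon^{1-3/q}$ into $\|\widehat f\|_{L^q/\mathbb{R}} \leq C\varepsilon^{-1}\|\nabla\widehat f\|_{W^{-1,q}}$ and divides out $\varepsilon^{-3/q}$, the factors of $\varepsilon$ cancel completely and one is left with $\|f\|_{L^q(\Omega_\varepsilon)/\mathbb{R}} \leq C\|\nabla f\|_{W^{-1,q}(\Omega_\varepsilon)}$ \emph{without} the $\varepsilon^{-1}$ — which is false, since for fixed smooth $f$ one has $\|\nabla f\|_{W^{-1,q}(\Omega_\varepsilon)} \lesssim \varepsilon$ by the Poincar\'e inequality in $\Omega_\varepsilon$. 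With the correct exponent $\varepsilon^{-3/q}$, the common factor divides out cleanly and the factor $\varepsilon^{-1}$ from $R=1/\varepsilon$ survives intact, giving exactly the stated estimate.
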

\begin{proof}
We apply Lemma \ref{lem4} with $R = 1/\varepsilon$ and use a scaling argument  %Let $f \in \mathcal{D}'(\Omega_{\varepsilon})$ be such that $\nabla f \in W^{-1,q}(\Omega_{\varepsilon})$. We define $g = f(\varepsilon \cdot)$, thus $\nabla g \in W^{-1,q}(\Omega_{R})$. Besides, 
%$$\| \nabla g \|_{W^{-1,q}(\frac{1}{\varepsilon}\Omega_{\varepsilon})} \leq C \varepsilon^{-\frac{d}{p}} \| \nabla f \|_{W^{-1,q}(\Omega_{\varepsilon})} \ \ \ \mathrm{et} \ \ \ \ \| g \|_{L^q(\frac{1}{\varepsilon}\Omega_{\varepsilon})/\mathbb{R}} = \varepsilon^{- \frac{d}{p}}\| f \|_{L^q(\Omega_{\varepsilon})/\mathbb{R}}.$$ 
%We then conclude that
%$$\| f \|_{L^q(\Omega_{\varepsilon})/\mathbb{R}} = \varepsilon^{\frac{d}{p}} \| g \|_{L^q(\frac{1}{\varepsilon}\Omega_{\varepsilon})/\mathbb{R}} \leq C \varepsilon^{-1} \varepsilon^{\frac{d}{p}} \| \nabla g \|_{W^{-1,q}(\frac{1}{\varepsilon}\Omega_{\varepsilon})} \leq C \varepsilon^{-1} \| \nabla f \|_{W^{-1,q}(\Omega_{\varepsilon})}.$$
\end{proof}

\begin{lemme} Suppose that Assumption \textbf{(A4)$_0$} is satisfied.
Let $1 < q < + \infty$ and $F \in \left[W^{1,q}(\mathbb{R}^3)\right]^3$. Suppose that for all $k \in \mathbb{Z}^3$,
\begin{equation} 
\int_{\partial \mathcal{O}_k} F \cdot n = 0.
\label{eq:int}
\end{equation}
The problem
\begin{equation}
    \begin{cases}
    \begin{aligned}
    - \mathrm{div}(v) & = \mathrm{div}(F) \quad \mathrm{in} \quad \mathbb{R}^3 \setminus \overline{\mathcal{O}} \\
    v & = 0 \quad \mathrm{on}\quad \partial \mathcal{O}
    \end{aligned}
    \end{cases}
\end{equation}
admits a solution $v \in \left[W^{1,q}(\mathbb{R}^3 \setminus \overline{\mathcal{O}}) \right]^3$ such that
$$\| v \|_{\left[W^{1,q}(\mathbb{R}^3 \setminus \overline{\mathcal{O}})\right]^3} \leq C \| F \|_{\left[W^{1,q}(\mathbb{R}^3 \setminus \overline{\mathcal{O}})\right]^3}$$ where $C$ is a constant independent of $F$.
\label{lem:divdiv}
\end{lemme}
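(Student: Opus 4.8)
The plan is to split the global divergence equation into independent cell problems, each solved by Assumption \textbf{(A4)$_0$}, after localizing $F$ near the perforations by a periodic cut-off tuned so that the per-cell compatibility conditions hold; the leftover, supported away from $\mathcal{O}$, will be absorbed directly by an explicit field.

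First I would fix a cut-off function $\chi_1 \in \mathcal{C}^{\infty}(\mathbb{R}^3)$ with $\chi_1 = 1$ on $Q'$ and $\mathrm{supp}(\chi_1) \subset\subset Q$ (such a function exists uniformly, thanks to the separation $d(\mathcal{O}_k,\partial Q_k) \geq \delta$ and the sets $Q',Q''$ introduced in Subsection~\ref{section12}), and denote by $\chi_1^k$ its translate to the cell $Q_k$, so that $\mathrm{supp}(\chi_1^k) \subset\subset Q_k$ and $\chi_1^k = 1$ on a neighbourhood of $\overline{\mathcal{O}_k}$, uniformly in $k$. For each $k \in \mathbb{Z}^3$ I would solve the local problem $\mathrm{div}(v_k) = -\mathrm{div}(\chi_1^k F)$ in $Q_k \setminus \overline{\mathcal{O}_k}$ with $v_k = 0$ on $\partial(Q_k \setminus \overline{\mathcal{O}_k})$. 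Its compatibility condition is met: by Gauss--Green on the Lipschitz domain $Q_k\setminus\overline{\mathcal{O}_k}$,
$$
\int_{Q_k \setminus \overline{\mathcal{O}_k}} \mathrm{div}(\chi_1^k F) \;=\; \int_{\partial Q_k} \chi_1^k\, F \cdot n \;-\; \int_{\partial \mathcal{O}_k} \chi_1^k\, F \cdot n \;=\; -\int_{\partial \mathcal{O}_k} F \cdot n \;=\; 0 ,
$$
since $\chi_1^k = 0$ on $\partial Q_k$, $\chi_1^k = 1$ on $\partial\mathcal{O}_k$, and $\int_{\partial\mathcal{O}_k} F\cdot n = 0$ by hypothesis~\eqref{eq:int}. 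As $-\mathrm{div}(\chi_1^k F) = -\nabla\chi_1^k\cdot F - \chi_1^k\,\mathrm{div}\,F \in L^q(Q_k\setminus\overline{\mathcal{O}_k})$, Assumption \textbf{(A4)$_0$} yields $v_k \in [W^{1,q}_0(Q_k\setminus\overline{\mathcal{O}_k})]^3$ with $\|v_k\|_{W^{1,q}(Q_k\setminus\overline{\mathcal{O}_k})} \leq C_q^0 \|\mathrm{div}(\chi_1^k F)\|_{L^q(Q_k\setminus\overline{\mathcal{O}_k})} \leq C \|F\|_{W^{1,q}(Q_k\setminus\overline{\mathcal{O}_k})}$, with $C$ independent of $k$ because $\|\nabla\chi_1\|_{L^{\infty}}$ and $C_q^0$ are.

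Next I would glue the pieces together. Each $v_k$, extended by zero, belongs to $[W^{1,q}(\mathbb{R}^3\setminus\overline{\mathcal{O}})]^3$ (the extension introduces no jump because $v_k$ vanishes on the whole boundary of its cell), so $v_{\mathrm{loc}} := \sum_{k}v_k$ is a locally finite sum lying in $[W^{1,q}(\mathbb{R}^3\setminus\overline{\mathcal{O}})]^3$, with $\|v_{\mathrm{loc}}\|_{W^{1,q}(\mathbb{R}^3\setminus\overline{\mathcal{O}})}^q = \sum_k\|v_k\|_{W^{1,q}(Q_k\setminus\overline{\mathcal{O}_k})}^q \leq C^q\|F\|_{W^{1,q}(\mathbb{R}^3\setminus\overline{\mathcal{O}})}^q$, with $v_{\mathrm{loc}} = 0$ on $\partial\mathcal{O}$ and $-\mathrm{div}(v_{\mathrm{loc}}) = \mathrm{div}\big((\sum_k\chi_1^k)F\big)$ in $\mathbb{R}^3\setminus\overline{\mathcal{O}}$. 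Since $\sum_k\chi_1^k$ is a $\mathcal{C}^{\infty}$ periodic function equal to $1$ on $\bigcup_k Q'_k$, hence equal to $1$ on a neighbourhood of $\mathcal{O}$, the field $w := -(1-\sum_k\chi_1^k)F$ vanishes near $\partial\mathcal{O}$, belongs to $[W^{1,q}(\mathbb{R}^3\setminus\overline{\mathcal{O}})]^3$ with $\|w\|_{W^{1,q}(\mathbb{R}^3\setminus\overline{\mathcal{O}})} \leq C\|F\|_{W^{1,q}(\mathbb{R}^3\setminus\overline{\mathcal{O}})}$, and trivially satisfies $-\mathrm{div}(w) = \mathrm{div}\big((1-\sum_k\chi_1^k)F\big)$. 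Then $v := v_{\mathrm{loc}} + w$ solves $-\mathrm{div}(v) = \mathrm{div}(F)$ in $\mathbb{R}^3\setminus\overline{\mathcal{O}}$, $v = 0$ on $\partial\mathcal{O}$, with the required estimate.

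The construction is otherwise routine; the one delicate point — and the reason the flux hypothesis~\eqref{eq:int} is needed — is the verification of the per-cell compatibility condition, which dictates the precise design of $\chi_1$ (namely $\chi_1\equiv 1$ near each perforation and $\chi_1\equiv 0$ near each cell boundary), something achievable uniformly only because of the uniform separation of the $\mathcal{O}_k$ from $\partial Q_k$. A secondary, standard point is that the zero-extended cell solutions assemble into a genuine $W^{1,q}$ function, which holds precisely because each $v_k$ lies in $W^{1,q}_0$ of its cell. This is the same microscopic-correction mechanism as in Lemma~\ref{lem2}, except that here the macroscopic part degenerates to the explicit field $w$.
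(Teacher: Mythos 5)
Your proof is correct, but it takes a genuinely different route from the paper. The paper inverts the Laplacian globally, setting $\Psi$ to be the Newton potential of $\mathrm{div}(F)$, relying on Calder\'on--Zygmund estimates to get $\nabla\Psi\in\left[W^{1,q}(\mathbb{R}^3)\right]^3$; it then corrects the nonzero trace of $\nabla\Psi$ on $\partial\mathcal{O}$ cell by cell, using \textbf{(A4)$_0$} and the flux hypothesis~\eqref{eq:int} to guarantee compatibility. You bypass the Newton potential entirely: you localize $F$ near each perforation with the periodic cut-off $\chi_1^k$ (so each cell's compatibility condition follows directly from $\int_{\partial\mathcal{O}_k}F\cdot n=0$ and $\chi_1^k|_{\partial Q_k}=0$, $\chi_1^k|_{\partial\mathcal{O}_k}=1$), solve the divergence problem in each cell via \textbf{(A4)$_0$}, and absorb the complementary part $(1-\sum_k\chi_1^k)F$, which already vanishes near $\partial\mathcal{O}$, by using it verbatim as the residual field. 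Your argument is more elementary---it needs only Gauss--Green and the uniform cut-off structure, not the $L^q\to W^{1,q}$ boundedness of the Riesz potential---and it is also more transparent about where the flux hypothesis enters. The paper's route, on the other hand, mirrors the construction in Lemma~\ref{lem:cor2h2pasper} where the potential is indispensable for higher ($H^2$) regularity; using the same template here keeps the two lemmas structurally parallel, at the cost of introducing machinery that, as you show, is not actually necessary at the $W^{1,q}$ level. Both arguments yield the same uniform constant for the same reason: disjointly supported cell solutions with \textbf{(A4)$_0$} bounds summable in $\ell^q$.
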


\begin{proof} 
As in the proof of Lemma \ref{lem:cor2h2pasper}, we search the function $v$ under the form $v = \nabla \Psi + v_1$ where 
$$- \Delta \Psi = \mathrm{div}(F) \quad \mathrm{on} \quad \mathbb{R}^3, \quad \mathrm{that \ is} \quad \Psi(x) = C \int_{\mathbb{R}^3} \frac{F(y)\cdot (x-y)}{|x-y|^3} \mathrm{d}y.$$
and 
$$
\begin{cases}
\begin{aligned}
\mathrm{div}(v_1) & = 0 \quad \mathrm{in} \quad \mathbb{R}^3 \setminus \overline{\mathcal{O}} \\
v_1 &= - \nabla \Psi \quad \mathrm{on} \quad \partial \mathcal{O}.
\end{aligned}
\end{cases}$$
Since $F \in \left[L^q(\mathbb{R}^3)\right]^3$, we know that $\nabla \Psi \in \left[L^q(\mathbb{R}^3)\right]^3$ and that there exists a constant $C > 0$ such that $\|\nabla \Psi \|_{\left[L^q(\mathbb{R}^3)\right]^3} \leq C \|F \|_{\left[L^q(\mathbb{R}^3)\right]^3}$ (see e.g. \cite[Exercice II.11.9]{galdi2011introduction}). Besides, since $\mathrm{div}(F) \in L^q(\mathbb{R}^3)$, the estimate $\| D^2 \Psi \|_{\left[L^q(\mathbb{R}^3)\right]^{3\times 3}} \leq C\| \mathrm{div}(F) \|_{L^q(\mathbb{R}^3)}$ holds true (see e.g. \cite[Theorem 9.9 \& p. 235]{gilbarg2015elliptic}). Thus,
$$\nabla \Psi \in \left[W^{1,q}(\mathbb{R}^3) \right]^3 \ \ \ \mathrm{and} \ \ \ \| \nabla \Psi \|_{\left[W^{1,q}(\mathbb{R}^3)\right]^3} \leq C \| F \|_{\left[W^{1,q}(\mathbb{R}^3)\right]^3}.$$
We define the function $v_1$ on each cell $Q_k \setminus \overline{\mathcal{O}_k}$ as a solution of
    \begin{equation}
    \begin{cases}
    \begin{aligned}
    - \mathrm{div}(v_1^k) & = 0 \quad \mathrm{in} \ Q_k \setminus \overline{\mathcal{O}_k} \\
    v^k_{1} & = 0 \quad \mathrm{on} \quad \partial Q_k \\
    v^k_{1} & = - \nabla \Psi \quad \mathrm{on} \quad \partial \mathcal{O}_k.
    \end{aligned}
    \end{cases}
    \label{eq:lemme2}
    \end{equation}
Assumption \textbf{(A4)$_0$} together with \eqref{eq:int} guarantee that Problem \eqref{eq:lemme2} admits a solution that satisfies the estimate 
    $\| v^k_1 \|_{\left[W^{1,q}(Q_k \setminus \overline{\mathcal{O}_k})\right]^3} \leq C \| \nabla \Psi \|_{\left[W^{1,q}(Q_k)\right]^3}$. This proves the Lemma.

\end{proof}

\begin{lemme} Suppose that Assumption \textbf{(A4)$_1$} is satisfied.
Let $g \in H^1_0(\Omega_{\varepsilon})$ be such that $$\int_{\Omega_{\varepsilon}} g = 0.$$ The problem 
\begin{equation}
\begin{cases}
\begin{aligned}
- \mathrm{div}(u) & = g \quad \mathrm{in} \quad \Omega_{\varepsilon} \\
u & = 0  \quad \mathrm{on} \quad \partial \Omega_{\varepsilon} 
\end{aligned}
\end{cases}
\label{eq:lemh2}
\end{equation}
admits a solution $u \in \left[ H^2_0(\Omega_{\varepsilon}) \right]^3$ such that
\begin{equation}
\| u \|_{H^2(\Omega_{\varepsilon})} \leq \frac{C}{\varepsilon} \| g \|_{H^1(\Omega_{\varepsilon})},
\label{eq:lemh2estim}
\end{equation}
where the constant $C$ is independent of $\varepsilon$.
\label{lem:h2}
\end{lemme}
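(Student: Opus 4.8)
The plan is to reduce the divergence problem on the perforated domain $\Omega_{\varepsilon}$ to a macroscopic divergence problem on $\Omega$ plus local corrections in each perforated cell, mirroring the structure already used in the proofs of Lemma~\ref{lem2} and Lemma~\ref{lem:divdiv}, except that now we work in $H^2$ rather than $W^{1,q}$ and we must track the $\varepsilon$-dependence carefully. First I would rescale: set $\widehat{\Omega}_{\varepsilon} := \tfrac{1}{\varepsilon}\Omega_{\varepsilon} = \Omega^{1/\varepsilon}$ and $\widehat{g}(y) := \varepsilon\, g(\varepsilon y)$, so that solving \eqref{eq:lemh2} with the bound \eqref{eq:lemh2estim} is equivalent to solving $-\mathrm{div}(\widehat{u}) = \widehat{g}$ on $\widehat{\Omega}_{\varepsilon}$ with $\widehat{u}\in[H^2_0(\widehat\Omega_\varepsilon)]^3$ and $\|\widehat u\|_{H^2(\widehat\Omega_\varepsilon)} \le C\|\widehat g\|_{H^1(\widehat\Omega_\varepsilon)}$, a constant $C$ independent of $\varepsilon$; here I use that $\widehat{g}\in H^1_0$ with zero mean on $\widehat\Omega_\varepsilon$.

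Next, on the rescaled domain $R\Omega$ with $R=1/\varepsilon$, I would first extend $\widehat g$ by zero into the perforations (this preserves $H^1_0$ and the zero-mean property) and solve the macroscopic problem $-\mathrm{div}(u_1) = \widehat g$ on $R\Omega$ with $u_1 \in [H^2_0(R\Omega)]^3$. By \cite[Theorem III.3.3]{galdi2011introduction} on the fixed domain $\Omega$, together with a scaling argument, one gets $\|u_1\|_{H^2(R\Omega)} \le C R \|\widehat g\|_{H^1(R\Omega)}$ with $C$ independent of $R$ --- this is the $H^2$ analogue of the $W^{1,q}$ estimate used in Lemma~\ref{lem2}, and requires $\widehat g \in H^1$, which is exactly our hypothesis. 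Then, in each cell $Q_k$ with $Q_k \subset R\Omega$, I would correct the boundary trace on $\partial\mathcal{O}_k$ by solving
\begin{equation}
\begin{cases}
\begin{aligned}
\mathrm{div}(u_2^k) & = 0 \quad \mathrm{in} \quad Q_k \setminus \overline{\mathcal{O}_k} \\
u_2^k & = 0 \quad \mathrm{on} \quad \partial Q_k \\
u_2^k & = - u_1 \quad \mathrm{on} \quad \partial \mathcal{O}_k,
\end{aligned}
\end{cases}
\label{eq:lemh2plan}
\end{equation}
whose compatibility condition $\int_{\partial\mathcal{O}_k} u_1\cdot n = \int_{\mathcal{O}_k} \mathrm{div}(u_1) = -\int_{\mathcal{O}_k}\widehat g = 0$ holds because $\widehat g$ was extended by zero. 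One solves \eqref{eq:lemh2plan} by writing $u_2^k = \nabla\Psi^k + h^k$ with $\Psi^k$ lifting the trace and $h^k$ supplied by Assumption \textbf{(A4)$_1$} applied to $\mathrm{div}(h^k) = \mathrm{div}(\chi_1^k\nabla\Psi^k)$ (as in Step 2 of the proof of Lemma~\ref{lem:cor2h2pasper}), giving $\|u_2^k\|_{H^2(Q_k\setminus\overline{\mathcal{O}_k})} \le C\|u_1\|_{H^2(Q_k)}$ with $C$ uniform in $k$ by \textbf{(A4)$_1$}. Setting $u_2 := \sum_k u_2^k \mathbf{1}_{Q_k\setminus\overline{\mathcal{O}_k}}$ (extended by zero in the holes) and summing the squared estimates over $k$ yields $\|u_2\|_{H^2(\widehat\Omega_\varepsilon)} \le C\|u_1\|_{H^2(R\Omega)}$. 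Finally $\widehat u := u_1 + u_2 \in [H^2_0(\widehat\Omega_\varepsilon)]^3$ solves $-\mathrm{div}(\widehat u) = \widehat g$ and satisfies $\|\widehat u\|_{H^2} \le CR\|\widehat g\|_{H^1}$; scaling back gives \eqref{eq:lemh2estim}.

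The main obstacle I anticipate is the $H^2$ lift $\Psi^k$ of the boundary data $-u_1$ on $\partial\mathcal{O}_k$ together with the reduction to a divergence-data problem of the form required by \textbf{(A4)$_1$}: one must ensure that $\mathrm{div}(\chi_1^k\nabla\Psi^k)$ lies in $W^{1,2}_0(Q_k\setminus\overline{\mathcal{O}_k})$ and that its $H^1$-norm is controlled by $\|u_1\|_{H^2(Q_k)}$ uniformly in $k$, which is precisely the argument already carried out in Step 2 of Lemma~\ref{lem:cor2h2pasper} and can be invoked verbatim. A secondary point, purely bookkeeping, is the uniformity in $R$ of the macroscopic $H^2$ estimate for $u_1$, which follows from the fixed-domain result on $\Omega$ by the standard dilation $y\mapsto Ry$ and tracking the powers of $R$ in each derivative norm. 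Once these two ingredients are in place, summing over cells and rescaling are routine.
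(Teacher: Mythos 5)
Your overall strategy --- macroscopic divergence solve on the unperforated domain plus per-cell corrections through Assumption \textbf{(A4)$_1$} --- matches the paper's proof, and the cell-level construction (cut-off, \cite[Theorem III.3.3]{galdi2011introduction} plus \textbf{(A4)$_1$}, sum in $\ell^2$) is exactly right. The gap is in the scaling bookkeeping at the start and the end.

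First, the claimed equivalence is wrong: with $\widehat u(y)=u(\varepsilon y)$ and $\widehat g(y)=\varepsilon g(\varepsilon y)$, the rescaled form of \eqref{eq:lemh2estim} is not $\|\widehat u\|_{H^2(\widehat\Omega_\varepsilon)}\le C\|\widehat g\|_{H^1(\widehat\Omega_\varepsilon)}$ with $C$ independent of $\varepsilon$, but rather the weighted inequality
$\varepsilon^4\|\widehat u\|_{L^2}^2+\varepsilon^2\|\nabla\widehat u\|_{L^2}^2+\|D^2\widehat u\|_{L^2}^2 \le C\bigl[\|\widehat g\|_{L^2}^2+\varepsilon^{-2}\|\nabla\widehat g\|_{L^2}^2\bigr]$,
which is incomparable to both the constant-$C$ statement you assert and the bound $\|\widehat u\|_{H^2}\le C R\|\widehat g\|_{H^1}$ that your argument actually produces. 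Second, and as a consequence, "scaling back" $\|\widehat u\|_{H^2}\le C R\|\widehat g\|_{H^1}$ does \emph{not} give \eqref{eq:lemh2estim}: tracking the powers of $\varepsilon$ one finds
$\|D^2 u\|_{L^2(\Omega_\varepsilon)}\le C\bigl[\varepsilon^{-2}\|g\|_{L^2(\Omega_\varepsilon)}+\varepsilon^{-1}\|\nabla g\|_{L^2(\Omega_\varepsilon)}\bigr]$,
with an excess factor $\varepsilon^{-1}$ on the $L^2$ part. The missing ingredient is the Poincar\'e inequality of Lemma~\ref{lem:poinc}: since $g\in H^1_0(\Omega_\varepsilon)$, one has $\|g\|_{L^2(\Omega_\varepsilon)}\le C\varepsilon\|\nabla g\|_{L^2(\Omega_\varepsilon)}$, which absorbs the extra $\varepsilon^{-1}$. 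This is exactly the final step of the paper's proof; without it the estimate you prove is strictly weaker than \eqref{eq:lemh2estim}. (The paper also exploits, as you could, the sharper low-order estimate $\|v\|_{H^1(\Omega)}\le C\|g\|_{L^2(\Omega)}$ from \cite[Theorem~III.3.3]{galdi2011introduction} for the macroscopic solve, which keeps the weighted norms transparent and makes the final appeal to Poincar\'e apparent.)
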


\begin{proof} 
The proof is very similar to the proof of Lemma \ref{lem:cor2h2pasper}. We explain here only the main lines and refer to Subsection \ref{subsect:auxfunc} for details.
We first extend $g$ by $0$ in the perforations. We notice that $$g \in H^1_0(\Omega) \quad \mathrm{and} \quad \int_{\Omega} g = 0.$$ 
 We consider the problem
\begin{equation}
\begin{cases}
\begin{aligned}
- \mathrm{div}(v) & = g \quad \mathrm{in} \quad \Omega \\
v & = 0 \quad \mathrm{on} \quad \partial \Omega.
\end{aligned}
\end{cases}
\label{eq:pbmacro}
\end{equation}
Thanks to \cite[Theorem III.3.3]{galdi2011introduction}, Problem \eqref{eq:pbmacro} admits a solution $v \in \left[H^2_0(\Omega)\right]^3$ such that 
\begin{equation}
\| \nabla v \|_{H^1(\Omega)} \leq C(\Omega) \| g \|_{H^1(\Omega)} \ \ \mathrm{and} \ \ \| v \|_{H^1(\Omega)} \leq C(\Omega) \| g \|_{L^2(\Omega)} .
\label{eq:estimmacro}
\end{equation}
We fix a cell $Q_k$ such that $\varepsilon Q_k \subset \Omega$. We build a function $v_1^k \in \left[ H^2(\varepsilon \big[ Q_k \setminus \overline{\mathcal{O}_k} \big])\right]^3$ such that 
\begin{equation}
\begin{cases}
\begin{aligned}
\mathrm{div}(v_1^k) & = 0 \quad \mathrm{in} \quad \varepsilon \left[ Q_k \setminus \overline{\mathcal{O}_k} \right] \\
v_1^k & = - v \ \ \mathrm{on} \quad \varepsilon \partial \mathcal{O}_k \\
\nabla v_1^k & = - \nabla v \quad \mathrm{on} \quad \varepsilon \partial \mathcal{O}_k.
\end{aligned}
\end{cases}
\label{eq:wk}
\end{equation} 
 For that, we use a cut-off function $\chi^k_{\varepsilon} := \chi\left(\varepsilon[\cdot + k] \right)$ as in \textbf{Step 2} of the proof of Lemma \ref{lem:cor2h2pasper}. We solve 
\begin{equation}
\begin{cases}
\begin{aligned}
\mathrm{div}(w^k) & = \mathrm{div}(\chi_k v)  \quad \mathrm{in} \quad \varepsilon  \left[Q_k \setminus \overline{\mathcal{O}_k} \right] \\
w^k & = 0 \quad \mathrm{on} \quad \varepsilon \partial \left[  Q_k \setminus \overline{\mathcal{O}_k} \right]
\end{aligned}
\end{cases}
\end{equation}
and then set $v_1^k := w^k - \chi^k_{\varepsilon} v$. \cite[Theorem III.3.3]{galdi2011introduction} together with Assumption \textbf{(A4)$_1$} and a standard scaling argument show that Problem \eqref{eq:wk} admits a solution such that 
\begin{equation}
\begin{aligned}
\varepsilon^2 \| D^2 v_1^k \|_{L^2(\varepsilon Q_k \backslash \overline{\mathcal{O}_k})} & + \varepsilon \| \nabla v_1^k \|_{L^2(\varepsilon Q_k \backslash \overline{\mathcal{O}_k})} + \| v_1^k \|_{L^2(\varepsilon Q_k \backslash \overline{\mathcal{O}_k})} \\ & \leq C( \varepsilon^2 \| D^2 v \|_{L^2(\varepsilon Q_k)} + \varepsilon \| \nabla v \|_{L^2(\varepsilon Q_k)} + \| v \|_{L^2(\varepsilon Q_k)}),
\end{aligned}
\label{eq:estimh2-3}
\end{equation}
where the constant $C$ is independent of $k$ and $\varepsilon$. We extend $v_1^k$ by zero to $\Omega_{\varepsilon}$.  
We define 
$$v_1 := \sum_{k\in Y_{\varepsilon}} v^k_1.$$
Then, after summation of \eqref{eq:estimh2-3} over $k$, the estimate
\begin{equation}
\begin{aligned}
\varepsilon^2 \| D^2 v_1 \|_{\left[L^2(\Omega_{\varepsilon})^3\right]^{3 \times 3}} + & \varepsilon \| \nabla v_1 \|_{\left[L^2(\Omega_{\varepsilon})\right]^{3 \times 3}}  + \| v_1 \|_{\left[L^2(\Omega_{\varepsilon})\right]^3} \\ & \leq C\left[ \varepsilon^2 \| D^2 v \|_{\left[L^2(\Omega)^3\right]^{3 \times 3}} + \varepsilon \| \nabla v \|_{\left[L^2(\Omega)\right]^{3 \times 3}} + \| v \|_{\left[L^2(\Omega)\right]^3} \right]
\end{aligned}
\label{eq:estimh2-4}
\end{equation}
holds true.
We note that
the function $u := v + v_1$ satisfies the conclusion of Lemma \ref{lem:h2}. Furthermore, using \eqref{eq:estimmacro} and \eqref{eq:estimh2-4}, we get
\begin{equation}
\begin{aligned}
\varepsilon^2 \| D^2 u \|_{\left[L^2(\Omega_{\varepsilon})^3 \right]^{3 \times 3}} & + \varepsilon \| \nabla u \|_{\left[L^2(\Omega_{\varepsilon})\right]^{3 \times 3}} + \| u \|_{\left[L^2(\Omega_{\varepsilon})\right]^3} \\
& \leq C \left[\varepsilon^2 \| D^2 v \|_{\left[L^2(\Omega)^3\right]^{3 \times 3}} + \varepsilon \| \nabla v \|_{\left[L^2(\Omega)\right]^{3 \times 3}} + \| v \|_{\left[L^2(\Omega)\right]^3} \right] \\ 
& \leq C \left[ \varepsilon^2 \| g \|_{H^1(\Omega)} + \|  v \|_{\left[H^1(\Omega)\right]^3} \right] \leq C \left[ \varepsilon^2 \| g \|_{H^1(\Omega)} + \| g \|_{L^2(\Omega_{\varepsilon})} \right] \\
& \leq C \left[ \varepsilon^2 \| g \|_{H^1(\Omega)} + \varepsilon \| g \|_{H^1(\Omega_{\varepsilon})} \right] \leq C \varepsilon \| g \|_{H^1(\Omega)}.
\end{aligned}
\label{eq:estimh2-5}
\end{equation}
where we used Lemma \ref{lem:poinc} on $g$ in the last inequality. Thus \eqref{eq:lemh2estim} is proved.
\end{proof}

\section{Geometric assumptions} 
\label{sec:geom}

We prove in this section that Assumptions \textbf{(A3)} and \textbf{(A4)'} imply Assumption \textbf{(A4)} and that Assumptions \textbf{(A3)} and \textbf{(A5)'} imply Assumption \textbf{(A5)}. Appendix \ref{sec:geom} follows the proofs of \cite[Theorem III.3.1]{galdi2011introduction} and \cite[Theorem IV.5.1]{galdi2011introduction} and makes precise the dependance of the constants appearing in these arguments. We begin by a covering Lemma.

\begin{lemme} Suppose that Assumption \textbf{(A3)} is satisfied.
Let $0 < \rho < d(\partial \mathcal{O}_0^{\mathrm{per}},\partial Q)$. There exists $N \in \mathbb{N}^*$ such that for all $k \in \mathbb{Z}^3$, there exist $2N$ balls $B_i^k, i = 1,...,2N$ such that
\begin{enumerate}[label=(\roman*)]
\item \label{lemB1i} for all $i=1,...,N$, we have that $B_i^k = B(\xi_i^k,\rho)$, $\xi_i^k \in \partial \mathcal{O}_k$ and $\{x \in Q_k \setminus \overline{\mathcal{O}_k}, \ d(x,\partial \mathcal{O}_k) < 3 \rho/16\} \subset \bigcup_{i=1}^N B_i^k$ ;
\item \label{lemB1ii} for all $i = N+1,...,2N$, we have that $B_i^k = B(\xi_i^k,\rho/32)$, $\xi_i^k \in \{x \in Q_k \setminus \overline{\mathcal{O}_k}, \ d(x,\partial \mathcal{O}_k) > \rho/16\}$ and $\{x \in Q_k \setminus \overline{\mathcal{O}_k}, \ d(x,\partial \mathcal{O}_k) \geq 3 \rho/16\} \subset \bigcup_{i=N+1}^{2N} B_i^k$.
\end{enumerate}
Moreover, there exist $2N$ balls $B_{i}^{0,\mathrm{per}}$, $i=1,...,2N$ and $\eta = \eta(\rho) > 0$ such that 
\begin{enumerate}[label=(\roman*)]
\setcounter{enumi}{2}
\item \label{lemB1iii} for all $i=1,...,2N$, $B_{i}^{0,\mathrm{per}} \subset Q$ and $\left\{x \ \mathrm{s.t.} \ d \left(x,Q \setminus \overline{\mathcal{O}_0^{\mathrm{per}}} \right) < \eta \right\} \subset \bigcup_{i=1}^{2N} B_i^{0,\mathrm{per}}$. 
\item \label{lemB1iv} there exists a bijection $\sigma : \{1,...,2N\} \rightarrow \{1,...,2N\}$ such that for all $i \in \{1,...,2N-1\}$, we have that 
 $$\Omega_{\sigma(i)}^{0,\mathrm{per}} \cap \left( \bigcup_{s=i+1}^{2N} \Omega_{\sigma(s)}^{0,\mathrm{per}} \right) \neq \emptyset \quad \mathrm{and} \quad \Omega_{j}^{0,\mathrm{per}} := B_{j}^{0,\mathrm{per}} \cap \left( Q \setminus \overline{\mathcal{O}_0^{\mathrm{per}}} \right).$$
\item \label{lemB1v} for all but a finite number of $k \in \mathbb{Z}^3$, we have that $B_i^{k,\mathrm{per}} \subset B_i^k$ for all $i=1,...,2N$ and $\left\{x \ \mathrm{s.t.} \ d \left(x,Q_k \setminus \overline{\mathcal{O}_k} \right) < \eta/2 \right\} \subset \bigcup_{i=1}^{2N} B_i^{k,\mathrm{per}}$, where $B_i^{k,\mathrm{per}} := B_i^{0,\mathrm{per}} + k$.
\end{enumerate} 
\label{lem:cover}
\end{lemme}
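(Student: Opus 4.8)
The plan is to construct the covering first on the periodic reference cell, where it is explicit and finite, and then to transport it to the non-periodic cells by means of Assumption \textbf{(A3)}; this follows the scheme of the coverings in \cite[Theorems III.3.1 and IV.5.1]{galdi2011introduction}, the one new point being that the number of balls and the threshold $\eta$ must be controlled uniformly in $k$.

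First I would treat the reference cell $Q\setminus\overline{\mathcal{O}_0^{\mathrm{per}}}$. It is connected (Assumption \textbf{(A1)}), has $\mathcal{C}^{2,\alpha}$ inner boundary $\partial\mathcal{O}_0^{\mathrm{per}}\subset\subset Q$, and $\rho<d(\partial\mathcal{O}_0^{\mathrm{per}},\partial Q)$. Covering the compact hypersurface $\partial\mathcal{O}_0^{\mathrm{per}}$ by finitely many balls $B(\xi,\rho/2)$ centred on it, and the compact set $\{x\in Q\setminus\overline{\mathcal{O}_0^{\mathrm{per}}}:d(x,\partial\mathcal{O}_0^{\mathrm{per}})\ge\rho/8\}$ by finitely many balls $B(\zeta,\rho/64)$ with $d(\zeta,\partial\mathcal{O}_0^{\mathrm{per}})>\rho/8$, I would obtain a number $N_{\mathrm{per}}$ and balls $B_i^{0,\mathrm{per}}\subset\subset Q$; taking $\eta$ to be a fixed fraction of the Lebesgue number of this cover yields \ref{lemB1iii}, and enumerating the pieces $\Omega_j^{0,\mathrm{per}}=B_j^{0,\mathrm{per}}\cap(Q\setminus\overline{\mathcal{O}_0^{\mathrm{per}}})$ along a path in the connected set $Q\setminus\overline{\mathcal{O}_0^{\mathrm{per}}}$ — which is always possible for a finite open cover of a connected set — gives the chain condition \ref{lemB1iv}.

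Next I would transport this to a generic cell. From \textbf{(A3)}, $(\alpha_k)\in\ell^1(\mathbb{Z}^3)$, hence $\alpha_k\to 0$, and the sandwich $\mathcal{O}_k^{\mathrm{per},-}(\alpha_k)\subset\mathcal{O}_k\subset\mathcal{O}_k^{\mathrm{per},+}(\alpha_k)$ combined with the tubular neighbourhood of the smooth hypersurface $\partial\mathcal{O}_0^{\mathrm{per}}$ gives, for $|k|$ large, a uniform Hausdorff bound $d_H(\partial\mathcal{O}_k,\partial\mathcal{O}_k^{\mathrm{per}})\le C\alpha_k$ with $C$ depending only on $\mathcal{O}_0^{\mathrm{per}}$ (here the translation invariance of the reference configuration is used). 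For such $k$, I would re-centre each translated periodic boundary ball $B_i^{0,\mathrm{per}}+k$ onto a point $\xi_i^k\in\partial\mathcal{O}_k$ within $C\alpha_k$ and set $B_i^k=B(\xi_i^k,\rho)$; one checks that $B_i^{k,\mathrm{per}}:=B_i^{0,\mathrm{per}}+k\subset B_i^k$, that the $\xi_i^k$ stay dense enough on $\partial\mathcal{O}_k$ that the $B_i^k$ cover $\{x\in Q_k\setminus\overline{\mathcal{O}_k}:d(x,\partial\mathcal{O}_k)<3\rho/16\}$, which is \ref{lemB1i}, and — treating the interior balls the same way, with radius $\rho/32$ and centres pushed to distance $>\rho/16$ from $\partial\mathcal{O}_k$ — that \ref{lemB1ii} holds and that the $\eta$-cover of the reference cell becomes an $\eta/2$-cover of $Q_k\setminus\overline{\mathcal{O}_k}$, i.e. \ref{lemB1v}. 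Finally, for the finitely many remaining $k$, I would argue cell by cell: each $\mathcal{O}_k$ is a bounded Lipschitz set compactly contained in $Q_k$ (recalled after \textbf{(A3)}), so $\partial\mathcal{O}_k$ and $\{x\in Q_k\setminus\overline{\mathcal{O}_k}:d(x,\partial\mathcal{O}_k)\ge 3\rho/16\}$ are precompact and admit finite coverings by balls of the prescribed radii; I would then take $N$ to be the maximum of $N_{\mathrm{per}}$ and these finitely many cell-wise numbers, and pad every cell's family with repeated balls — placed consecutively, so as not to disturb \ref{lemB1iv} — up to exactly $2N$.

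I expect the main obstacle to be the uniformity in $k$. The radii and the centre constraints must be arranged in a dyadic hierarchy ($\rho/64,\ \rho/32,\ \rho/16,\ 3\rho/16,\ \rho/2,\ \rho$) precisely so that the nestings $B_i^{k,\mathrm{per}}\subset B_i^k$, the position constraints on the $\xi_i^k$, and both covering statements all survive a perturbation of size $C\alpha_k$ under an \emph{absolute} threshold on $|k|$; the quantitative input making this possible is the bound $d_H(\partial\mathcal{O}_k,\partial\mathcal{O}_k^{\mathrm{per}})\le C\alpha_k$ with $C$ independent of $k$, which is the only place the $\mathcal{C}^{2,\alpha}$ regularity of $\partial\mathcal{O}_0^{\mathrm{per}}$ really enters. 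Everything genuinely $k$-dependent is then confined to the finite exceptional set, where nothing beyond ``a bounded Lipschitz domain has a finite cover'' is needed.
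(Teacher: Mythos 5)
Your proposal follows essentially the same strategy as the paper's proof: build the cover on the reference periodic cell, transport it to generic cells using Assumption \textbf{(A3)}, and dispose of the finite exceptional set by ad hoc coverings before taking a uniform $N$. The structure and the way each item (i)--(v) falls out are the same.

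The one place where the mechanism differs slightly is the transport step. You invoke a quantitative Hausdorff bound $d_H(\partial\mathcal{O}_k,\partial\mathcal{O}_k^{\mathrm{per}})\le C\alpha_k$ and re-centre each periodic ball onto a nearby point of $\partial\mathcal{O}_k$. The paper avoids any quantitative Hausdorff estimate: it fixes once and for all, for each periodic boundary ball $B_i^{0,\mathrm{per}}$, two points $y_i\in B_i^{0,\mathrm{per}}\cap\mathcal{O}_0^{\mathrm{per}}$ and $z_i\in B_i^{0,\mathrm{per}}\setminus\mathcal{O}_0^{\mathrm{per}}$ both at a fixed distance $\widehat\rho>0$ from $\partial\mathcal{O}_0^{\mathrm{per}}$, observes via \textbf{(A3)} that for $\alpha_k<\widehat\rho$ the translates $y_i^k,z_i^k$ lie respectively in $\mathcal{O}_k$ and in its complement, and then finds $\xi_i^k\in[y_i^k,z_i^k]\cap\partial\mathcal{O}_k$ by connectedness of the segment. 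The paper also takes $B_i^k=B_i^{k,\mathrm{per}}$ for the interior balls, so no re-centring is needed there. Both arguments rest on the same geometric content (the sandwich \textbf{(A3)} plus regularity of the periodic boundary), but the paper's version requires only the qualitative threshold $\alpha_k<\min(\rho/16,\widehat\rho)$, whereas yours asks for an explicit constant in a Hausdorff estimate -- correct, but a slightly heavier lemma than what is needed, and one that would itself have to be justified (your appeal to the tubular neighbourhood does do the job). Neither approach is wrong; the paper's is marginally leaner because it never isolates the Hausdorff bound as a separate claim.

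One small thing to double-check in your write-up: for the interior balls, the centres of the periodic cover are already at distance $\ge\rho/8$ from $\partial\mathcal{O}_0^{\mathrm{per}}$, so \textbf{(A3)} with $\alpha_k<\rho/16$ already puts them at distance $>\rho/16$ from $\partial\mathcal{O}_k$; no ``pushing'' of the interior centres is needed, and the statement's requirement $B_i^{k,\mathrm{per}}\subset B_i^k$ is then automatic with $B_i^k=B_i^{k,\mathrm{per}}$, whereas your choice of a smaller radius $\rho/64$ for the reference interior balls works too but introduces one more parameter to track.
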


\begin{remarque}
Lemma \ref{lem:cover}.\ref{lemB1iv} means that we can relabel the family $B_i^{0,\mathrm{per}}, i =1,...,2N$ such that for all $i \in \{1,...,2N-1\}$, we have that $\Omega_i^{0,\mathrm{per}} \cap \left( \Omega_{i+1}^{0,\mathrm{per}} \cup \cdots \cup \Omega_{2N}^{0,\mathrm{per}} \right) \neq \emptyset$.
\label{rem:B2}
\end{remarque}

\begin{proof} 
% Now, we need a suitable covering of $Q''_k \setminus \overline{\mathcal{O}_k}$ to glue together estimates \eqref{eq:geom2} and \eqref{eq:geom1}. We show that there exists $N \in \mathbb{N}^*$ independent of $k$ such that for all $k \in \mathbb{Z}^3$, there exist at most $N$ balls $B_i^k = B(w_i^k,\rho)$, $i =1,...,N$ satisfying $x_i^k \in \partial \mathcal{O}_k$ and 
% \begin{equation}
% \left\{x \in Q_k, \quad d(x,\partial \mathcal{O}_k) < \rho/8 \right\} \subset \bigcup_{i=1}^N B_i^k.
% \label{eq:A52}
% \end{equation}
 
The proof of Lemma \ref{lem:cover} relies on the periodic structure and on Assumption \textbf{(A3)}. We first fix by compactness $N_0$ balls $B_i^{0,\mathrm{per}} = B(x_i,\rho/2)$, $i=1,...,N_0$ such that 
\begin{equation} 
 \left\{ x \in Q, \quad d(x,\partial \mathcal{O}_0^{\mathrm{per}}) \leq \rho/4 \right\} \subset \bigcup_{i=1}^{N_0} B_i^{0,\mathrm{per}} \quad \mathrm{and} \quad x_i \in \partial \mathcal{O}_0^{\mathrm{per}}.
 \label{eq:A5}
 \end{equation}
 We note that there exists $\widehat{\rho} > 0$ such that for all $i \in \{1,...,N_0\}$, there exist two points $y_i \in B_i^{0,\mathrm{per}} \cap \mathcal{O}_0^{\mathrm{per}}$ and $z_i \in B_i^{0,\mathrm{per}} \setminus \mathcal{O}_0^{\mathrm{per}}$ satisfying $d(y_i,\partial \mathcal{O}_0^{\mathrm{per}}) > \widehat{\rho}$ and $d(z_i,\partial \mathcal{O}_0^{\mathrm{per}}) > \widehat{\rho}$.
We define for each $k \in \mathbb{Z}^3$, $x_i^k := x_i +k$, $y_i^k := y_i + k$, $z_i^k := z_i + k$ and $B_i^{k,\mathrm{per}} := B_i^{0,\mathrm{per}} + k = B(x_i^k,\rho/2)$. By translation invariance, we obviously have \eqref{eq:A5} with 0 replaced by any $k \in \mathbb{Z}^3$.
 
 \medskip

 We consider $k \in \mathbb{Z}^3$ such that $\alpha_k < \min(\rho/16,\widehat{\rho})$ (where we recall that $\alpha_k$ is introduced in \textbf{(A3)}). Then, by Assumption \textbf{(A3)} and \eqref{eq:A5}, we have that
 \begin{equation}
 \left\{x \in Q_k, \quad d(x,\partial \mathcal{O}_k) < 3\rho/16 \right\} \subset \left\{x \in Q_k, \quad d(x,\partial \mathcal{O}_k^{\mathrm{per}}) < \rho/4 \right\} \subset \bigcup_{i=1}^{N_0} B_i^{k,\mathrm{per}}.
 \label{eq:A51}
 \end{equation}
 %In particular, $\partial \mathcal{O}_k$ is covered by $B_i^{k,\mathrm{per}}$, $i=1,...,N_0$.
 We next claim that each ball $B_i^{k,\mathrm{per}}$, $i=1,...,N_0$ intersects $\partial \mathcal{O}_k$. By definition, we have that $y_i^k \in B_i^{k,\mathrm{per}} \cap \mathcal{O}_k^{\mathrm{per}}$ and that $d(y_i^k,\partial \mathcal{O}_k^{\mathrm{per}}) > \widehat{\rho} > \alpha_k$. Thus, by \textbf{(A3)}, we get that $y_i^k \in B_i^{k,\mathrm{per}} \cap \mathcal{O}_k$. Similarly, we have that $z_i^k \in B_i^{k,\mathrm{per}} \setminus \mathcal{O}_k$. Thus, there exists $\xi_i^k \in [y_i^k,z_i^k] \cap \partial \mathcal{O}_k$, proving that $\partial \mathcal{O}_k \cap B_i^{k,\mathrm{per}} \neq \emptyset$.
%Consider $f : t \mapsto d(x_t,\mathcal{O}_k^{\mathrm{per},c})$ where $x_t = tx_i^k + (1-t)y_i^k$. It is a continuous function. Choose $t^*$ such that $t^* = arginf\{ f(t) = 0\}$. By continuity, $f(t^*) = 0$ so $x_{t*} \in \overline{\mathcal{O}_k^{\mathrm{per},c}}$. If $x_{t*} \notin \overline{\mathcal{O}_k^{\mathrm{per}}}$ then $x_{t*} \in Q \setminus \overline{\mathcal{O}_k^{\mathrm{per}}}$ who is open. Thus, there exists $t < t^*$ such that $x_t \in Q \setminus \overline{\mathcal{O}_k^{\mathrm{per}}} \subset Q \setminus\mathcal{O}_k^{\mathrm{per}}$. This contradicts that $t^*$ is inf. 
  We fix an arbitrary point $\xi_i^k \in \partial \mathcal{O}_k \cap  B_i^{k,\mathrm{per}}$ and we notice that $ B_i^{k,\mathrm{per}} \subset B(\xi_i^k,\rho)$. By \eqref{eq:A51}, we conclude that 
 \begin{equation}
 \{x \in Q_k, \quad d(x,\partial \mathcal{O}_k) < 3\rho/16\} \subset \bigcup_{i=1}^{N_0} B_i^k.
 \label{eq:LEMMAB1}
 \end{equation}
 It remains to cover $\{x \in Q_k \setminus \overline{\mathcal{O}_k}, \ d(x,\partial\mathcal{O}_k) \geq 3\rho/16\}$. By \textbf{(A3)}, we have that
 \begin{equation}
 \{x \in Q_k \setminus \overline{\mathcal{O}_k}, \ d(x,\partial\mathcal{O}_k) \geq 3\rho/16\} \subset \{x \in \overline{Q_k \setminus \mathcal{O}_k^{\mathrm{per}}}, \ d(x,\partial\mathcal{O}_k^{\mathrm{per}}) \geq \rho/8\}.
 \label{eq:appendixB}
 \end{equation}
By compactness and translation invariance, we can cover the right hand side of \eqref{eq:appendixB} by  $N_1$ balls $B_i^{k,\mathrm{per}} = B(x_i^k,\rho/32)$, $i=N_0+1,...,N_0+N_1$ where $x_i^k$ is of the form $x_i^k = x_i +k$ and $x_i \in \{x \in Q \setminus \overline{\mathcal{O}_0^{\mathrm{per}}}, \ d(x,\partial \mathcal{O}_0^{\mathrm{per}}) \geq \rho/8\}$. We set $B_i^k := B_i^{k,\mathrm{per}}$ and $\xi_i^k := x_i^k$. By \textbf{(A3)}, we get that $\xi_i^k \in \{ Q_k \setminus \overline{\mathcal{O}_k}, \quad d(x,\partial \mathcal{O}_k) > \rho/16\}$. 
With $N$ to be fixed later, we have proved \ref{lemB1i}-\ref{lemB1ii} for $k \in \mathbb{Z}^3$ such that $\alpha_k < \min(\rho/16,\widehat{\rho})$.
\medskip

We fix $k \in \mathbb{Z}^3$ such that $\alpha_k \geq \min(\rho/16,\widehat{\rho})$. We take any covering of $\{x \in Q_k, \ d(x,\partial \mathcal{O}_k) < 3\rho/16\}$ with balls $B_i^k = B(\xi_i^k,\rho)$, $\xi_i^k \in \partial \mathcal{O}_k$ and $i \in \{1,...,N_0^k\}$. We then take any covering of $\{x \in Q_k \setminus \overline{\mathcal{O}_k}, \ d(x,\partial \mathcal{O}_k) \geq 3\rho/16 \}$ with balls $B_i^k = B(\xi_i^k,\rho/32)$, $\xi_i^k \in \{x \in Q_k \setminus \overline{\mathcal{O}_k}, \ d(x,\partial \mathcal{O}_k) \geq 3\rho/16\}$ and $i \in \{N_0^k + 1,...,N_0^k + N_1^k\}$.
 
 \medskip
 
 We set 
 $N := \max_{k \in \mathbb{Z}^3}\{ N_0^k,N_1^k\}$ where $N_0^k = N_0$ and $N_1^k = N_1$ if $\alpha_k < \min(\rho/16,\widehat{\rho})$. Note that because of \textbf{(A3)}, we have that $N < +\infty$. If $N^k_0 < N$ or $N_1^k < N$, we duplicate one of the balls in order to define $2N$ balls $B_i^k, i=1,...,2N$. We proceed similarly for $B_i^{0,\mathrm{per}}$, $i=1,...,N_0+N_1$. Assertions \ref{lemB1i}, \ref{lemB1ii}, \ref{lemB1iii} and \ref{lemB1v} are proved. We prove easily \ref{lemB1iv} by connectedness of $Q \setminus \overline{\mathcal{O}_0^{\mathrm{per}}}$. 
 % We recall that, because of \textbf{(A3)}, we have $\alpha_k \underset{|k| \rightarrow +\infty}{\longrightarrow} 0$. There is as a consequence only a finite number of $N_k^0$'s and $N_k^1$'s. Choosing $N := \max(N_0,N_1,\max_{k} N_k^0,\max_{k} N_k^1)$ proves \ref{lemB1i}-\ref{lemB1ii}-\ref{lemB1iii}-\ref{lemB1v} of Lemma \ref{lem:cover}. Note finally that since $Q \setminus \overline{\mathcal{O}_0^{\mathrm{per}}}$ is connected, we can relabel the balls $B_i^{0,\mathrm{per}}$, $i=1,...,2N$ such that \ref{lemB1iv} is satisfied. 
 \end{proof}
  
\subsection*{Assumptions \textbf{(A3)} and \textbf{(A4)'} imply \textbf{(A4)}}

\subsubsection*{Proof that \textbf{(A4)$_0$ is satisfied}}

Let $k \in \mathbb{Z}^3$. We formulate \cite[Theorem III.3.1]{galdi2011introduction} in our particular setting: suppose that there exists $\Omega_i^k, i=1,...,N_k$ such that
\begin{equation}
Q_k \setminus \overline{\mathcal{O}_k} = \bigcup_{i=1}^{N_k} \Omega_i^k,
\label{eq:decA4}
\end{equation}
where $\Omega_i^k$ is star-shaped with respect to a ball $B_i^k$ of radius $\rho_i^k$ such that $B_i^k \subset \subset \Omega_{i}^k$. We define for $i=1,...,N^k-1$
$$F_i^k := \Omega_i^k \cap \left( \bigcup_{s=i+1}^{N_k} \Omega_s^k \right)$$
and we assume that $F_i^k \neq \emptyset$ for all $i \in \{1,...,N_k-1\}$.
Then Problem \eqref{A4} with $f \in L^q(Q_k \setminus \overline{\mathcal{O}_k})$ admits a solution $v$ satisfying \eqref{eq:introdiv} with 
\begin{equation}
C_q^0(k) \leq C(q) \left( \frac{\mathrm{diam} (Q_k \setminus \overline{\mathcal{O}_k})}{\min_{i=1}^{N_k} \rho_i^k} \right)^{3} \left( 1 + \frac{\mathrm{diam} (Q_k \setminus \overline{\mathcal{O}_k})}{\min_{i=1}^{N_k} \rho_i^k} \right) \left(1 + \frac{|Q_k \setminus \overline{\mathcal{O}_k}|^{1 - 1/q}}{\min_{i=1}^{N_k-1} |F_i|^{1 - 1/q}} \right)^{N_k}.
\label{eq:dependanceA4}
\end{equation}
To bound $C_q^0(k)$ uniformly in $k$, it is sufficient to show that $Q_k \setminus \overline{\mathcal{O}_k}$ admits a decomposition of the form \eqref{eq:decA4} where $N_k$ is independent of $k$, $\rho_i^k$ and $|F_i^k|$ are uniformly bounded from below in $k$ and $i$. We first explain how to find such a decomposition with $N_k$ and $\rho_i^k$ independent of $k$ and $i$. By making precise the dependance on the geometry of $\partial \mathcal{O}_k$ at each step of the proof of \cite[Lemma II.1.3]{galdi2011introduction},  
%(see Figure \ref{fig:A4preuve})
we can show that \textbf{(A4)'} implies that there exists $\rho > 0$ such that for all $k \in \mathbb{Z}^3$ and $\xi^k \in \partial \mathcal{O}_k$, there exists an open set $G_{\xi^k}$ such that  $\Omega_{\xi^k} := G_{\xi^k} \cap \left(Q_k \setminus \overline{\mathcal{O}_k}\right)$ is star-shaped with respect to a ball of radius $\rho$ strictly included in $\Omega_{\xi^k}$ and $B(\xi^k,\rho) \subset G_{\xi^k}$.
%\begin{figure}[h!]
%\centering
%\begin{tikzpicture}[scale=3]\footnotesize
%\draw (-2,-0.6) node[above]{\large{$\partial \mathcal{O}_k$}};
%\draw (-4,-4)--(-4,-3);
%\draw (-3.5,-2.5)--(-4,-3);
%\draw (-3.5,-2.5) arc(90:0:1);
%\draw (-2.5,-3.5)--(-2.2,-3.8);
%\draw (-4,-4)--(-2.2,-3.8);
%\fill[lightgray] (-1,0) circle (0.3cm);
%\draw[dotted] (-1,0) circle (0.3cm);
%\draw (-2,0) ellipse(1cm and 0.7cm);
%\draw[<->] (-1,0)--(-1.3,0);
%\draw (-1.15,0) node[above]{\large{$r$}};
%\draw [->] (-0.8,0.35) arc (120:60:1);
%\draw (1,0) circle (0.6);
%\draw (0.31,-0.25) arc (135:45:1);
%\draw (1,0.1) node[above]{\large{$x$}};
%\fill[black] (1,0.04) circle(0.03);
%\draw (1.7,-0.25) node[right]{\large{$\zeta_x$}};
%\draw[dashed] (1,-0.7)--(1,0.1);
%\draw[blue] (1,-0.6)--(0.7,0);
%\draw[blue] (1,-0.6)--(1.3,0);
%\fill[gray] (1,-0.3) circle(0.1);

%\end{tikzpicture}
%\caption{Illustration of the proof of \cite[Lemma II.3.1]{galdi2011introduction}}
%\label{fig:A4preuve}
%\end{figure}
%We now have to choose a covering of $Q_k \setminus \overline{\mathcal{O}_k}$. 

\medskip

We next apply Lemma \ref{lem:cover} with $\rho$ given before and we denote by $B_i^k$, $i=1,...,2N$ the family of balls that we obtain. For $i=1,...,N$, we define $G_i^k := G_{\xi_i^k}$ and $\Omega_i^k := \Omega_{\xi_i^k}$. For $i=N+1,...,2N$, we define $\Omega_i^k := B_i^k$. Since $B_i^k \subset G_i^k$ for $i=1,...,N$ and because $B_i^k, i=1,...,2N$ covers $Q_k \setminus \overline{\mathcal{O}_k}$, we have that \eqref{eq:decA4} is satisfied with $\rho_i^k \geq \rho/32$ and $N_k = N$. 
%Besides, it is clear that $\Omega_i^k$ is star-shaped with respect to a ball of radius $\rho$. 

\medskip

It remains to check that there exists a relabeling of the $\Omega_i^k$'s such that we have that $\min_{i=1}^{2N-1} |F_i^k| \geq C$ where $C > 0$ is independent of $k$. We use Lemma \ref{lem:cover}.\ref{lemB1iii}-\ref{lemB1v}. According to Remark \ref{rem:B2}, we relabel the $\Omega_i^{k,\mathrm{per}}$ (note that this also implies a relabeling of the $\Omega_i^k$'s) such that 
$$\forall i \in \{1,...,2N-1\}, \ F_i^{k,\mathrm{per}} :=  \Omega_i^{k,\mathrm{per}} \cap \left( \Omega_{i+1}^{k,\mathrm{per}} \cup \cdots \cup \Omega_{2N}^{k,\mathrm{per}} \right) \neq \emptyset.$$ We then fix $\rho' > 0$ such that for all $i \in \{1,...,2N-1\}$, we have that $F_{i}^{k,\mathrm{per}}$ contains a ball $\left(B_{i}^{k,\mathrm{per}}\right)'$ of radius $\rho'$ such that $\left(B_{i}^{k,\mathrm{per}}\right)' \subset \subset Q_k \setminus \overline{\mathcal{O}_k^{\mathrm{per}}}$. We fix $k \in \mathbb{Z}^3$ such that Lemma \ref{lem:cover}.\ref{lemB1v} is satisfied and such that 
\begin{equation}
\alpha_k < \min_{i=1}^{2N-1} d\left(\left(B_{i}^{k,\mathrm{per}}\right)',\partial \mathcal{O}_k^{\mathrm{per}} \right). 
%\quad \left[ = \min_{i=1}^{2N-1} d\left(\left(B_{i}^{0,\mathrm{per}}\right)',\partial \mathcal{O}%_0^{\mathrm{per}} \right) \right].
\label{eq:proofA40}
\end{equation}
Then, for all $i \in \{1,...,2N-1\}$, we have that 
\begin{equation}
\left(B_{i}^{k,\mathrm{per}}\right)' \subset \subset Q_k \setminus \overline{\mathcal{O}_k}.
\label{eq:proofA40A}
\end{equation}
We then recall that
$$F_i^k = \Omega_{i}^k \cap \left(\bigcup_{s=i+1}^{2N} \Omega_{i}^k \right) = \left[ G_{i}^k \cap \left(\bigcup_{s=i+1}^{2N} G^k_{i} \right) \right] \cap \left( Q_k \setminus \overline{\mathcal{O}_k} \right).$$ By Lemma \ref{lem:cover}.\ref{lemB1v}, we have that $B_j^{k,\mathrm{per}} \subset G_j^k$ for all $j \in \{1,...,2N\}$. Together with \eqref{eq:proofA40A}, this yields that 
$\left(B_{i}^{k,\mathrm{per}}\right)' \subset F_{i}^k$ for all $i \in \{1,...,2N-1\}$. Thus, $\min_{i=1}^{2N-1} | F_{i}^k | \geq \frac{4}{3} \pi \rho'^3$. Since by \textbf{(A3)} there are only a finite number of indices $k$ such that \eqref{eq:proofA40} is not satisfied, we conclude that, after eventually relabeling the $F_i^k$'s, we have that $\min_{i=1}^{2N-1} |F_i^k| \geq C > 0$.

\subsubsection*{Proof that \textbf{(A4)$_1$ is satisfied}}

We briefly sketch the proof of \textbf{(A4)$_1$} and we refer to the proof of \textbf{(A4)$_0$} for some ingredients. Let $k \in \mathbb{Z}^3$ and $f \in W^{1,q}_0(Q_k \setminus \overline{\mathcal{O}_k})$. To solve Problem \eqref{A4}, we use a decomposition of the form \eqref{eq:decA4} with $N_k$ uniform in $k$ ($=N$) and $\Omega_i^k$ that is star-shaped with respect to a ball of radius $\rho$ uniformly bounded from below in $k$ and $i$, as constructed in the proof of \textbf{(A4)$_0$}. We then write $f = f_1 + \cdots + f_{N}$ where $f_i \in W^{1,q}_0(\Omega_i)$, $\int_{\Omega_i} f_i = 0$, $\|f_i\|_{W^{1,q}(\Omega_i^k)} \leq C_i^k \|f\|_{W^{1,q}(Q_k \setminus \overline{\mathcal{O}_k})}$ and we solve the Problem:
$$\begin{cases}
\begin{aligned}
\mathrm{div}\ v_i & = f_i \quad \mathrm{in} \quad \Omega_i^k \\
v_i & \in \left[ W^{2,q}_0(\Omega_i^k)\right]^3.
\end{aligned}
\end{cases}$$
Thanks to the estimate (III.3.23) of \cite[p. 168]{galdi2011introduction}, we have that 
$$\|v_i\|_{\left[W^{2,q}(\Omega_i)\right]^3} \leq C(q,\rho) \|f_i \|_{W^{1,q}(\Omega_i^k)} \leq C(q,\rho) C_i^k \|f\|_{W^{1,q}(Q_k \setminus \overline{\mathcal{O}_k})}.$$
Extending $v_i$ by zero to 
$Q_k \setminus \overline{\mathcal{O}_k}$ and setting $v := v_1 + \cdots + v_{N}$, we have that $v$ solves Problem \eqref{A4} with the estimate $$\|v\|_{\left[W^{2,q}(Q_k \setminus \overline{\mathcal{O}_k})\right]^3} \leq C(q,\rho) N C_i^k\|f \|_{W^{1,q}(Q_k \setminus \overline{\mathcal{O}_k})}.$$
We can conclude that \textbf{(A4)$_1$} is satisfied if $C_i^k$ is uniformly bounded in $i$ and $k$. To prove that, we make precise the dependance of the constant controlling $\|f_i\|_{W^{1,q}(\Omega_i^k)}$ in the proof of \cite[Lemma III.3.4.(vii)-(viii)]{galdi2011introduction}. This constant depends on $N$ and on the maximum of the $W^{1,\infty}-$norms of the functions $\Psi_i^k$, $i=1,...,2N$ and $\chi_i^k$, $i=1,...,2N-1$ where $\{\Psi_1^k,...,\Psi_{2N}^k\}$ is a partition of unity associated to $\{G_1^k,...,G_{2N}^k\}$ and $\chi_i^k \in \mathcal{D}(F_i^k)$ satisfies $\int_{F_i^k} \chi_i^k = 1$. Because of Lemma \ref{lem:cover}.\ref{lemB1v}, the family $\{\Psi_1^k,...,\Psi_{2N}^k\}$ may be chosen independently of $k$ (by using the periodic balls), except for a finite number of indices $k$. Besides, still after the exclusion of a finite number of indices $k$, we have shown in the proof of \textbf{(A4)$_0$} that $F_i^k$ contains a ball of radius $\rho'$ which is uniformly bounded in $i$ and $k$. Thus, $\chi_i^k$ may be chosen as the translation of a reference function $\chi$ satisfying $\chi \in \mathcal{D}(B(0,\rho'))$ and $\int_{B(0,\rho')} \chi = 1$. This proves that $\max_{i=1}^{2N} C_i^k \leq C$ for all but a finite number of $k \in \mathbb{Z}^3$. Applying \cite[Lemma III.3.4]{galdi2011introduction} for the remaining indices $k$, we conclude that $\max_{i=1}^{2N} C_i^k  \leq C$ for all $k \in \mathbb{Z}^3$. This concludes the proof of \textbf{(A4)$_1$}.

%For \textbf{(A4)$_1$}, we need to apply \cite[Theorem III.3.3]{galdi2011introduction} and in particular to understand the dependance of the constants on the domain $\Omega$. The main result is \cite[Lemma III.3.4]{galdi2011introduction} and concerns the construction of a suitable covering of $Q_k \setminus \overline{\mathcal{O}_k}$, which we can take here equal to the one given by Lemma \ref{lem:cover}. Let $\{ \Psi_1^k,...,\Psi_{2N}^k\}$ a partition of unity associated to 
%$$\{G_1,...,G_{2N}\} := \{C_{w_1}, C_{w_2},...,C_{w_n},B_{N+1}^k,...,B_{2N}^k \}.$$
%We define $\Omega_i := G_i \cap \Omega$. We check that $\nabla \Psi_i^k$ is uniformly bounded (in $k$). We then see that for all $i \in \{1,...,2N-1\}$, there exists $\theta_i \in C^{\infty}_0 \left(\Omega_i \cap \left( \bigcup_{j=i+1}^{2N} \Omega_j \right) \right)$ such that $\| \theta_k \|_{W^{1,\infty}} \leq C$ and $\int \theta_i =1$. This proves the result.

%We note that because of \cite[Theorem III.3.1]{galdi2011introduction} and \cite[Lemma II.1.3 and its proof]{galdi2011introduction}, we have that \textbf{(A4)} is implied by \textbf{(A4)'}. 

\subsection*{Assumptions \textbf{(A3)} and \textbf{(A5)'} imply \textbf{(A5)}}

 We fix $f \in \left[L^q(Q''_k \setminus \overline{\mathcal{O}_k})\right]^3$ and we consider the pair $(v,p)$ solution to \eqref{A5}. We want to prove the regularity estimate \eqref{eq:introreg}. The interior regularity property is given by the following result (see \cite[Theorem IV.4.1]{galdi2011introduction}):
\begin{equation}
\|D^2 v \|_{L^q(\Omega_k)^{3\times 3 \times 3}} + \| \nabla p \|_{L^q (\Omega_k)^3} \leq
 C \big[ \| v \|_{W^{1,q}(\Omega'_k)^3} + \| p \|_{L^q (\Omega'_k)} + \|f\|_{L^q(\Omega'_k)^3} \big],
\label{eq:geom2}
\end{equation}
where $\Omega_k \subset \subset \Omega'_k \subset \subset Q''_k \setminus \overline{\mathcal{O}_k}$ and $C$ depends only on $q$ and on the distance between $\Omega_k$ and $\left(\Omega'_k \right)^{c}$.
The regularity up to the boundary follows from the discussion \cite[pp.271-274] {galdi2011introduction}. By tracing the dependance of the constants in these arguments, we can show that, under Assumption \textbf{(A5)'}, there exist a radius $\rho > 0$, a constant $d > 1$ and a  constant $C > 0$ such that $d \rho < d(Q,\partial Q'')$ and for all $k \in \mathbb{Z}^3$ and $x \in \partial \mathcal{O}_k$, we have that
\begin{equation}
\begin{aligned}
\|D^2 v \|_{L^q\left(\left( Q_k \setminus \overline{\mathcal{O}_k} \right) \cap B(x,\rho)\right)^{3 \times 3 \times 3}}  & + \| \nabla p \|_{L^q \left(\left( Q_k \setminus \overline{\mathcal{O}_k} \right) \cap B(x,\rho)\right)^3} \leq
 C \big[ \| v \|_{W^{1,q}\left(\left( Q_k \setminus \overline{\mathcal{O}_k} \right) \cap B(x,d\rho)\right)^3} \\ & + \| p \|_{L^q \left(\left( Q_k \setminus \overline{\mathcal{O}_k} \right) \cap B(x,d\rho)\right)} + \|f\|_{L^q \left(\left( Q_k \setminus \overline{\mathcal{O}_k} \right) \cap B(x,d\rho)\right)^3} \big].
\end{aligned}
\label{eq:geom1}
\end{equation}
We combine estimates \eqref{eq:geom2} and \eqref{eq:geom1}. We fix $k \in \mathbb{Z}^3$. Let $\left(B_{i}^k\right)_{i=1,...,2N}$ be the family of balls given by Lemma \ref{lem:cover} (applied with $\rho$ defined by \eqref{eq:geom1}). Thanks to \eqref{eq:geom1} and the inequality
\begin{equation}
\forall \ a_1,...,a_p > 0, \quad a_1^q + \cdots + a_p^q \leq (a_1 + \cdots + a_p)^q \leq C_{p,q}(a_1^q + \cdots + a_p^q),
\label{eq:intermédiaire}
\end{equation}
we have for all $i \in \{1,...,N\}$,
\begin{equation}
\begin{aligned}
\|D^2 v \|_{L^q\left(\left( Q_k \setminus \overline{\mathcal{O}_k} \right) \cap B(\xi_i^k,\rho)\right)^{3 \times 3 \times 3}}^q  & + \| \nabla p \|_{L^q \left(\left( Q_k \setminus \overline{\mathcal{O}_k} \right) \cap B(\xi_i^k,\rho)\right)^3}^q \leq
 C \big[ \| v \|_{W^{1,q}\left(\left( Q_k \setminus \overline{\mathcal{O}_k} \right) \cap B(\xi_i^k,d\rho)\right)^3}^q \\ & + \| p \|_{L^q \left(\left( Q_k \setminus \overline{\mathcal{O}_k} \right) \cap B(\xi_i^k,d\rho)\right)}^q + \|f\|_{L^q \left(\left( Q_k \setminus \overline{\mathcal{O}_k} \right) \cap B(\xi_i^k,d\rho)\right)^3}^q \big].
\end{aligned}
\label{eq:geom1a}
\end{equation}
Summing \eqref{eq:geom1a} over $i\in \{1,...,N\}$ and using that 
$$U_k := \{x \in Q_k \setminus \overline{\mathcal{O}_k}, \ d(x,\partial \mathcal{O}_k) < 3 \rho/16\} \subset \bigcup_{i=1}^N B(\xi_i^k,\rho) \quad \mathrm{and} \quad \left(Q_k \setminus \overline{\mathcal{O}_k} \right) \cap B(\xi_i^k,d\rho) \subset Q''_k \setminus \overline{\mathcal{O}_k}$$ yield
\begin{equation}
\|D^2 v \|_{L^q\left(U_k\right)^{3 \times 3 \times 3}}^q  + \| \nabla p \|_{L^q \left(U_k\right)^3}^q \leq
 C N \left[ \| v \|_{W^{1,q}\left(Q''_k \setminus \overline{\mathcal{O}_k}\right)^3}^q + \| p \|_{L^q \left(Q''_k \setminus \overline{\mathcal{O}_k}\right)}^q + \|f\|_{L^q \left(Q''_k \setminus \overline{\mathcal{O}_k}\right)^3}^q \right].
\label{eq:geom1a1}
\end{equation}
We now apply \eqref{eq:geom2} to $\Omega_k = \{x \in Q_k \setminus \overline{\mathcal{O}_k}, \ d(x,\partial \mathcal{O}_k) > \rho/8\}$ and $\Omega'_k = Q''_k \setminus \overline{\mathcal{O}_k}$. We have that $d\big(\Omega_k, (\Omega'_k)^c\big) = \min\big(d(Q,\partial Q''),\rho/8 \big)$ is independent of $k$. Thus, using \eqref{eq:geom2} and \eqref{eq:intermédiaire} yield
\begin{equation}
\|D^2 v \|_{L^q(\Omega_k)^{3\times 3 \times 3}}^q + \| \nabla p \|_{L^q (\Omega_k)^3}^q \leq
 C \left[ \| v \|_{W^{1,q}(Q''_k \setminus \overline{\mathcal{O}_k})^3}^q + \| p \|_{L^q (Q''_k \setminus \overline{\mathcal{O}_k})}^q + \|f\|_{L^q(Q''_k \setminus \overline{\mathcal{O}_k})^3}^q \right],
\label{eq:geom2a}
\end{equation}
where $C$ is independent of $k$. Summing \eqref{eq:geom1a} and \eqref{eq:geom2a} and using that $U_k \cup \Omega_k = Q_k \setminus \overline{\mathcal{O}_k}$ together with \eqref{eq:intermédiaire} proves \textbf{(A5)}.

%For all $i = N+1,..,2N$, we apply \eqref{eq:geom2} to $\Omega_k = B_i^k$ and $\Omega'_k = B(\xi_i^k,\rho/30)$:
%\begin{equation}
%\|D^2 v \|_{L^q\left( B_i^k\right)^{3\times 3 \times 3}}^q + \| \nabla p \|_{L^q (B_i^k)^3}^q \leq
% C \big[ \| v \|_{W^{1,q}(B(\xi_i^k,\rho/30)^3}^q + \| p \|_{L^q (B(\xi_i^k,\rho/30)}^q + \|f\|%_{L^q(B(\xi_i^k,\rho/30} \big].
%\label{eq:geom2}
%\end{equation}

%Using \eqref{eq:geom1} and Lemma \ref{lem:cover}, we get 
% \begin{equation}
%\begin{aligned}
%\|D^2 v \|_{L^q(U_k)^{3 \times 3 \times 3}} + \| \nabla p \|_{L^q (U_k)^3} \leq
% C \big[ \| v \|_{W^{1,q}\left( Q''_k \setminus \overline{\mathcal{O}_k} \right)^3} + \| p \|_{L^q \left(Q''_k \setminus \overline{\mathcal{O}_k} \right)} + \|f\|_{L^q \left( Q''_k \setminus \overline{\mathcal{O}_k} \right)^3} \big],
%\end{aligned}
%\label{eq:geom3}
%\end{equation}
%where $U_k := \left\{ x \in Q_k \setminus \overline{\mathcal{O}_k}, \quad d(x,\partial \mathcal{O}_k) < \rho/8 \right\}$ and $C$ is independent of $k$. We now apply \eqref{eq:geom2} to $\Omega_k^i := B(w_i^k,\rho/32), i = N+1,...,2N$ and $\Omega'_k := B(w_i^k,\rho/30), i = N+1,...,2N$. Since $d(\Omega_k,\Omega'_k)^i$ is independent of $k$, we conclude that \textbf{(A5)} is proved. 

\subsection*{Counter-examples to the geometric assumptions}

\begin{figure}[h!]
\centering
\begin{subfigure}{.33\textwidth}
\centering
\begin{tikzpicture}[scale=0.8]\footnotesize
 \pgfmathsetmacro{\xone}{-0.2}
 \pgfmathsetmacro{\xtwo}{5.2}
 \pgfmathsetmacro{\yone}{-0.2}
 \pgfmathsetmacro{\ytwo}{5.2}
\begin{scope}<+->;
  \draw[step=5cm,gray,very thin] (\xone,\yone) grid (\xtwo,\ytwo);
\end{scope}

\draw (4.7,4.5) node[left]{$|k| \sim 1$};

\fill[blue!5] (2.2,2.5) ellipse (1.5cm and 1.5cm);
\draw[blue,thick] (2.2,2.5) ellipse (1.5cm and 1.5cm);
\draw[blue] (2.2,2.5) node[]{$\mathcal{O}_k^{\mathrm{per}}$};
\draw[black,thick] (2.2,2.5) ellipse (1.3cm and 1.7cm); 
\draw (2.2,0.8) node[below]{$\mathcal{O}_k$};
\fill[blue!5] (3.25,2.65) rectangle (3.55,2.35);
\draw[thick,black] (3.48,2.65)--(4.5,2.65);
\draw[thick,black] (3.48,2.35)--(4.5,2.35);
\draw[thick,black] (4.5,2.35)--(4.5,2.65);

\end{tikzpicture}  

\begin{tikzpicture}[scale=0.8]\footnotesize
 \pgfmathsetmacro{\xone}{-0.2}
 \pgfmathsetmacro{\xtwo}{5.2}
 \pgfmathsetmacro{\yone}{-0.2}
 \pgfmathsetmacro{\ytwo}{5.2}
\begin{scope}<+->;
  \draw[step=5cm,gray,very thin] (\xone,\yone) grid (\xtwo,\ytwo);
\end{scope}
\draw (4.7,4.5) node[left]{$|k| \gg 1$};

\fill[blue!5] (2.2,2.5) ellipse (1.5cm and 1.5cm);
\draw[blue,thick] (2.2,2.5) ellipse (1.5cm and 1.5cm);
\draw[blue] (2.2,2.5) node[]{$\mathcal{O}_k^{\mathrm{per}}$};
\draw[black,thick] (2.2,2.5) ellipse (1.43cm and 1.57cm); 
\draw (2.2,0.8) node[below]{$\mathcal{O}_k$};
\fill[blue!5] (3.6,2.53) rectangle (3.68,2.47);
\draw[thick,black] (3.62,2.53)--(4.5,2.53);
\draw[thick,black] (3.62,2.47)--(4.5,2.47);
\draw[thick,black] (4.5,2.54)--(4.5,2.46);

\end{tikzpicture}
\caption{Counter-example to \textbf{(A3)}, \\ close to the origin and at infinity}
\end{subfigure}%
\begin{subfigure}{.33\textwidth}
\centering
\begin{tikzpicture}[scale=0.8]\footnotesize
 \pgfmathsetmacro{\xone}{-0.2}
 \pgfmathsetmacro{\xtwo}{5.2}
 \pgfmathsetmacro{\yone}{-0.2}
 \pgfmathsetmacro{\ytwo}{5.2}
\begin{scope}<+->;
  \draw[step=5cm,gray,very thin] (\xone,\yone) grid (\xtwo,\ytwo);
\end{scope}

\draw (4.7,4.5) node[left]{$|k| \sim 1$};

\fill[blue!5] (2.2,2.5) ellipse (1.5cm and 1.5cm);
\draw[blue,thick] (2.2,2.5) ellipse (1.5cm and 1.5cm);
\draw[blue] (2.2,2.5) node[]{$\mathcal{O}_k^{\mathrm{per}}$};
\draw[black,thick] (2.2,2.5) ellipse (1.3cm and 1.7cm); 
\draw (2.2,0.8) node[below]{$\mathcal{O}_k$};
\fill[blue!5] (3.1,2.9) rectangle (3.55,2.1);
\draw[thick,black] (3.46,2.92)--(3.75,2.5);
\draw[thick,black] (3.46,2.08)--(3.75,2.5);
\end{tikzpicture}

\begin{tikzpicture}[scale=0.8]\footnotesize
 \pgfmathsetmacro{\xone}{-0.2}
 \pgfmathsetmacro{\xtwo}{5.2}
 \pgfmathsetmacro{\yone}{-0.2}
 \pgfmathsetmacro{\ytwo}{5.2}
\begin{scope}<+->;
  \draw[step=5cm,gray,very thin] (\xone,\yone) grid (\xtwo,\ytwo);
\end{scope}

\draw (4.7,4.5) node[left]{$|k| \gg 1$};

\fill[blue!5] (2.2,2.5) ellipse (1.5cm and 1.5cm);
\draw[blue,thick] (2.2,2.5) ellipse (1.5cm and 1.5cm);
\draw[blue] (2.2,2.5) node[]{$\mathcal{O}_k^{\mathrm{per}}$};
\draw[black,thick] (2.2,2.5) ellipse (1.43cm and 1.57cm); 
\draw (2.2,0.8) node[below]{$\mathcal{O}_k$};
\fill[blue!5] (3.55,2.65) rectangle (3.67,2.35);
\draw[thick,black] (3.61,2.65)--(3.8,2.61);
\draw[thick,black] (3.61,2.57)--(3.8,2.61);
\draw[thick,black] (3.61,2.57)--(3.8,2.53);
\draw[thick,black] (3.61,2.49)--(3.8,2.53);
\draw[thick,black] (3.61,2.49)--(3.8,2.45);
\draw[thick,black] (3.61,2.41)--(3.8,2.45);
\draw[thick,black] (3.61,2.41)--(3.8,2.37);
\draw[thick,black] (3.61,2.35)--(3.8,2.37);
\end{tikzpicture}
\caption{Counter-example to \textbf{(A4)'}, \\ close to the origin and at infinity}

\end{subfigure}%
\begin{subfigure}{.33\textwidth}
\centering
\begin{tikzpicture}[scale=0.8]\footnotesize
 \pgfmathsetmacro{\xone}{-0.2}
 \pgfmathsetmacro{\xtwo}{5.2}
 \pgfmathsetmacro{\yone}{-0.2}
 \pgfmathsetmacro{\ytwo}{5.2}
\begin{scope}<+->;
  \draw[step=5cm,gray,very thin] (\xone,\yone) grid (\xtwo,\ytwo);
\end{scope}

\draw (4.7,4.5) node[left]{$|k| \sim 1$};

\fill[blue!5] (2.2,2.5) ellipse (1.5cm and 1.5cm);
\draw[blue,thick] (2.2,2.5) ellipse (1.5cm and 1.5cm);
\draw[blue] (2.2,2.5) node[]{$\mathcal{O}_k^{\mathrm{per}}$};
\draw[black,thick] (2.2,2.5) ellipse (1.3cm and 1.7cm); 
\draw (2.2,0.8) node[below]{$\mathcal{O}_k$};
\fill[blue!5] (3.1,3) rectangle (3.55,2);

\draw[thick,black] (3.43,1.97) arc(-40:40:0.81);

\end{tikzpicture}

\begin{tikzpicture}[scale=0.8]\footnotesize
 \pgfmathsetmacro{\xone}{-0.2}
 \pgfmathsetmacro{\xtwo}{5.2}
 \pgfmathsetmacro{\yone}{-0.2}
 \pgfmathsetmacro{\ytwo}{5.2}
\begin{scope}<+->;
  \draw[step=5cm,gray,very thin] (\xone,\yone) grid (\xtwo,\ytwo);
\end{scope}

\draw (4.7,4.5) node[left]{$|k| \gg 1$};

\fill[blue!5] (2.2,2.5) ellipse (1.5cm and 1.5cm);
\draw[blue,thick] (2.2,2.5) ellipse (1.5cm and 1.5cm);
\draw[blue] (2.2,2.5) node[]{$\mathcal{O}_k^{\mathrm{per}}$};
\draw[black,thick] (2.2,2.5) ellipse (1.43cm and 1.57cm); 
\draw (2.2,0.8) node[below]{$\mathcal{O}_k$};
\fill[blue!5] (3.25,2.55) rectangle (3.67,2.35);
\draw[thick,black] (3.61,2.35) arc(-50:50:0.5cm and 0.13cm);

\end{tikzpicture}
\caption{Counter-example to \textbf{(A5)'}, \\ close to the origin and at infinity}
\end{subfigure}

\caption{Counter-examples to Assumptions \textbf{(A3)-(A4)'-(A5)'}. For each assumption, the picture above expresses the fact that there is no restriction on the perforation $\mathcal{O}_k$ when~$k$ remains bounded. The picture below shows a perforation $\mathcal{O}_k$ that is not allowed by Assumption \textbf{(A3)}, \textbf{(A4)'} or \textbf{(A5)'} when $|k| \rightarrow +\infty$.}
\label{fig6}
\end{figure}
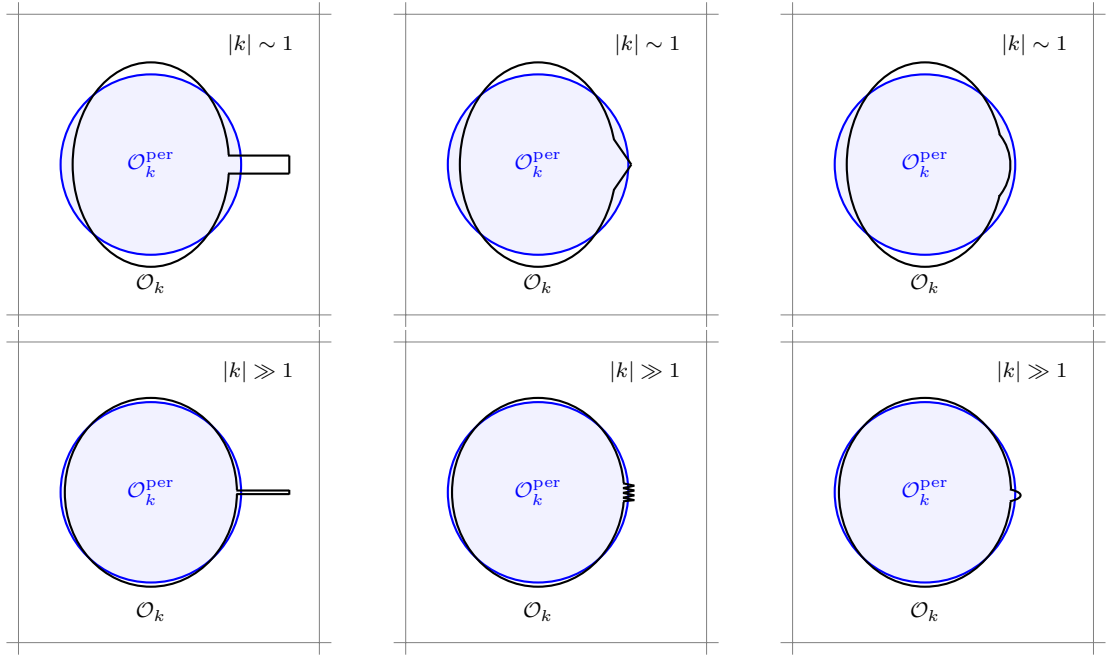

  \bibliography{article}
  \bibliographystyle{plain}

\end{document}